 \def\cal#1{\mathcal{#1}}
\def\AA{{\mathbb{A}}}
\def\PP{{\mathbb{P}}}
\def\RR{{\mathbb{R}}}
\def\NN{{\mathbb{N}}}
\def\ZZ{{\mathbb{Z}}}
\let \cedilla =\c
\renewcommand{\b}{{\frak{b}}}
\renewcommand{\a}{{\frak{a}}}
\renewcommand{\o}{{\mathcal O}}
\newcommand{\mld}{{\mathrm{mld}}}
\newcommand{\mldmj}{{\mathrm{mld_{MJ}}}}
\newcommand{\Hom}{\mathrm{Hom}}
\renewcommand{\j}{{\mathcal{J}}}
\newcommand{\cont}{{\mathrm{Cont}}}
\newcommand{\codim}{{\mathrm{codim}}}
\newcommand{\ord}{{\mathrm{ord}}}
\newcommand{\mult}{{\mathrm{mult}}}
\newcommand{\spec}{{\mathrm{Spec}}}
\newcommand{\hke}{{\widehat{k}_E}}
\newcommand{\hk}{{\widehat{k}}}
\newcommand{\I}{{\mathcal{I}}}
\newcommand{\val}{{\mathrm{val}}}
\newcommand{\jx}{{{\mathcal{J}}_X}}
\newcommand{\amj}{{a_{MJ}}}
\newcommand{\sing}{{\mathrm{Sing}}}
\newcommand{\emb}{{\mathrm{emb}}}
\renewcommand{\H}{{\mathcal{H}}}
\newcommand{\cha}{{\operatorname{char}}}
\newtheorem{thm}{Theorem}[section]
\newtheorem{cor}[thm]{Corollary}
\newtheorem{Corollary-Definition}[thm]{Corollary-Definition}
\newtheorem{prop}[thm]{Proposition}
\newtheorem{lem}[thm]{Lemma}
\newtheorem{conj}[thm]{Conjecture}
\theoremstyle{definition}
\newtheorem{defn}[thm]{Definition}
\newtheorem{rem}[thm]{Remark}
\newtheorem{say}[thm]{}
\newtheorem{claim}[thm]{Claim}
\newtheorem{Proposition-Definition}[thm]{Proposition-Definition}
\begin{document}

\title[]{Singularities  in arbitrary characteristic\\
via jet schemes}

\author{Shihoko Ishii and Ana J. Reguera}

\maketitle

\begin{abstract} This paper summarizes
recent results concerning 
 singularities with respect to the
Mather--Jacobian log discrepancies over an algebraically closed field of arbitrary characteristic.
The basic point is that the inversion of adjunction with respect to Mather--Jacobian discrepancies holds under arbitrary characteristic.
Using this fact, we will reduce several geometric properties of the singularities 
to jet scheme problems and try to avoid  discussions that are distinctive to characteristic 0.

\end{abstract}
\vskip.5truecm
\section{Introduction}
\noindent
Canonical  and log-canonical singularities play important roles  in
birational geometry over the base field of characteristic 0  
and  are recognized as ``good singularities," which we admit on a minimal
model.
These singularities can be well described by jet schemes when they are locally a complete
intersection.

On the other hand,  \cite{dd} and \cite{is}  independently introduced singularities that can be described by jet schemes in general.
These are based on Mather--Jacobian (MJ) discrepancies.
We define MJ-canonical (MJ-log-canonical) singularities
based on MJ discrepancy in a similar way to canonical (log-canonical ) singularities.
These MJ-canonical and MJ-log-canonical singularities have the following good properties:
rationality (MJ-canonical),
stability under small deformations (see \cite{ei}), and MJ-multiplier ideals (see \cite{EIM}).
These are all for a characteristic 0 case.

Now, for a positive characteristic case, one can also define
 canonical and log-canonical singularities as well as MJ-canonical and MJ-log-canonical
singularities.
However, canonical and log-canonical singularities of positive characteristic are difficult to treat, because the following results are  unavailable:
\begin{enumerate}
\item resolution of singularities;
\item Bertini's second theorem (generic smoothness); and
\item the Kodaira  vanishing theorem,
\end{enumerate}
all of which are used in  discussions of singularities over characteristic 0.

For example, even the seemingly simple statement that ``a general hyperplane section of a quasi-projective variety  has  at worst canonical singularities  if the variety has such singularities'' is not yet  proved for positive
characteristic case.
This statement has been proved in the characteristic 0 case by  resolution of the singularities and Bertini's second theorem.
If the variety is of dimension 3 over a base field of positive characteristic, then a resolution of the singularities exists
\cite{CP}, but  this problem is not
yet proved  (although  a result has been found under certain conditions \cite{hiro}).

On the contrary, the MJ-version of this problem has the possibility to be solved,
because MJ-singularities are well described by the jet schemes, which acts as an alternative to Bertini's
second theorem.
As  evidence, in Corollary \ref{hyper}, we prove this statement for three-dimensional MJ-canonical quasi-projective
variety,
thereby showing the possibility of MJ discrepancy for discussion of positive characteristics.

\vskip.5truecm
One of the aims of this paper is to summarize the results of MJ-singularities in the positive characteristic case and to clarify
those which has been proved and those which has not yet been proved.
Since  such a review 
does not presently exist elsewhere, we think that it would be useful for us to summarize them here.
 The basic theorems showing that MJ-singularities are well described by jet schemes
 are  Theorems \ref{formula} and  \ref{ia}.
 These theorems are proved in the same line as their proof   in the characteristic 
 0-case, by carefully avoiding the use of resolutions of the singularities.

The other aim of this paper is to show  that jet schemes are useful in the study of
singularities over positive characteristic base fields (Corollary \ref{hyper}).

 This paper is structured as follows:
 in Section 2,
 we give  characteristic-free discussions of MJ discrpancies.
In Section 3, we give preliminary information about jet schemes, define the codimension of
a cylinder, and describe minimal log MJ discrepancies using jet schemes in the 
arbitrary-characteristic case.
Note that these have been already established in the characteristic-0 case.
In Section 4, we show some properties of MJ-canonical and MJ-log-canonical singularities
of positive characteristic that have been proved and list some open problems for the 
positive-characteristic case.

\vskip.5truecm
{\bf Acknowledgement} The authors express their hearty thanks to Masayuki Hirokado for
providing them with information on varieties over the base field of positive characteristic.
We also thank the referees for constructible suggestions to improve the paper.

\vskip.5truecm
\section{Mather--Jacobian discrepancy}
\noindent
Throughout this paper, a variety refers to a reduced pure-dimensional scheme of
finite type over an algebraically closed field $k$ of arbitrary characteristic, unless
otherwise stated.

\begin{defn}
Let $X$ be a variety of dimension $d$.
Note that the projection
$$
\pi \colon \PP_X(\wedge^n \Omega_X) \to X
$$
is an isomorphism over the smooth locus $X_{reg} \subseteq X$.
In particular, we have a section $\sigma \colon X_{reg} \to \PP_X(\wedge^n \Omega_X)$.

The closure of the image of the section $\sigma$ is called the {\it Nash blow-up} of $X$,
and is denoted by $\widehat X$:
$$
\xymatrix@C=5pt{
& \PP_X(\wedge^n \Omega_X) \ar[d]^\pi
& \supseteq \; \widehat X := \overline{\sigma(X_{reg})} \quad\quad\quad  \\
X_{reg} \ar@{^{(}->}[r] \ar@/^15pt/[ur]^(.32)\sigma & X.
}
$$

\end{defn}

\begin{defn} Let $X$ be a variety.
We call a  morphism $\varphi: Y\to X$ a {\it partial resolution}, if $\varphi$ is proper birational
and $Y$ is normal. We also sometimes call $Y$ a partial resolution.
A prime divisor over $X$ is  a prime divisor that appears in a partial resolution of $X$.
A prime divisor $E$ over $X$ is called an {\it exceptional prime divisor} if a partial resolution on which $E$ appears is not isomorphic at the generic point of $E$.
\end{defn}

\begin{defn}
Let $E$ be a prime divisor over $X$, then the Mather discrepancy $\hke\in \ZZ_{\geq 0}$
and the Jacobian discrepancy $j_E$ at $E$
are defined as follows:

Let $\varphi: Y\to X$ be a partial resolution of $X$ such that $E$ appears on $Y$, 
 let $\widehat X\to X$ be the Nash  blow-up 
and let $Y\times_X\widehat{X}$ be the fiber product.
In the fiber product,  the union of the irreducible components each of which
dominates an irreducible component of $X$ is called the main part.

Replacing $Y$ by the normalization of the main part of the fiber product,
 we may assume that $\varphi:Y\to X$ factors through
the Nash blow-up $\widehat{X}\to X$.

By the universality of the Nash blow-up, the image $Im$ of the following homomorphism
is invertible:
$$\varphi^*(\wedge^d\Omega_X)\to \omega_Y,$$
where $d=\dim X$.
Restricting the above homomorphism to the smooth locus $Y_{sm}$, we can describe
$$Im|_{Y_{sm}}=\I \omega_{Y_{sm}}$$
with some invertible ideal sheaf $\I$,
as $\omega_{Y_{sm}}$ is invertible.
As $Y$ is normal, the generic point $\eta$ of $E$ is in ${Y_{sm}}$.
Let us define $$\hke= \val_E(\I)$$
and call it the Mather discrepancy of $X$ at $E$.
We note that if  $\I=\o_Y(-\widehat{K}_{Y/X})$ is expressed with a Cartier divisor $\widehat{K}_{Y/X}$ on $Y_{sm}$,
then $\hke$ is the coefficient of $\widehat{K}_{Y/X}$ at $E$.

For a coherent ideal sheaf $\a\subset \o_X$, let us define
$$\val_E(\a)=\min \{\val_E(f)\mid f\in \a\}.$$
Note that if $\varphi:Y\to X$ factors through the blow-up of $X$ by the ideal $\a$,
then $\val_E(\a)$ is the coefficient at $E$ of the divisor $Z$ defined by $\o_Y(-Z)=\a\o_Y$.
In particular, when $\a$ is the Jacobian ideal  $\jx$ of $X$,
we define
$$j_E=\val_E(\jx)$$
and call it the Jacobian discrepancy of $X$ at $E$.

Then, the difference $\hke-j_E$ is called the Mather--Jacobian (MJ) discrepancy of $X$ at $E$.
\end{defn}

\begin{defn}
Let $X$ be a variety over $k$, let $\a\subset \o_X$ be a non-zero coherent ideal sheaf, and let $n$ be a non-negative real number.
A real number
$$\amj(E; X, \a^n)=\hke-j_E-n\cdot\val_E(\a)+1$$
is called the MJ-log discrepancy of the pair $(X,\a^n) $ at $E$.

A pair $(X,\a^n) $ is called MJ-canonical (resp. MJ-log-canonical)
 at a (not necessarily closed) point $x\in X$,
if $$\amj(E; X, \a^n)\geq 1\ \ \ \  (\rm{resp. }\ \geq 0)$$
for every exceptional prime divisor $E$ over $X$ whose center contains $x$.

A pair $(X,\a^n) $ is called MJ-klt at a (not necessarily closed) point $x\in X$,
if $$\amj(E; X, \a^n)>0$$
for every  prime divisor $E$ over $X$ whose center on $X$ contains $x$.
Here, klt means ``Kawamata log-terminal" which appears in Minimal Model Program and
plays an important role (see, for example, \cite{koll}).
Our ``MJ-klt" is MJ-version of the usual klt.

We call a pair $(X,\a^n)$  MJ-canonical (resp. MJ-log-canonical, MJ-klt)
if it is MJ-canonical (resp. MJ-log-canonical, MJ-klt)  at every point of $X$.
\end{defn}
Here, MJ-klt is the MJ-version of the usual klt, i.e., Kawamata log terminal 
(see, for example, \cite{koll}).

\begin{rem} \label{comin}
\begin{enumerate}
\item
Note that the definitions of MJ-canonical and MJ-log-canonical require the condition only for
``exceptional" prime divisors, while the definition of MJ-klt requires the condition for all prime divisors over $X$.
However, regarding the MJ-log-canonical case,
if $(X, \a^n)$ is MJ-log-canonical at a point $x\in X$, then
$\amj(E; X, \a^n)\geq 0$  holds for every prime divisor over $X$ whose center contains
$x$.
\item If $X$ is normal and locally a complete intersection, the image of the canonical map $\wedge^d\Omega_X\to \omega_X$ is $\j_X\omega_X$
 (see, for example, \cite[Remark 9.6]{EM}).
 In this case $\hke-j_E=k_E $ for every prime divisor $E$ over $X$.
Therefore, the MJ-canonical and MJ-log-canonical cases are equivalent to the usual canonical and log-canonical cases, respectively.
\end{enumerate}
\end{rem}

\begin{defn}
\label{defmldmj}
 Let $X$ be a variety over $k$.
The Mather--Jacobian minimal log discrepancies (MJ-mld) of the pair
$(X, \a^n)$ at a proper closed subset $W\subset X$ and at a point $\eta\in X$
are defined as follows:

$${\mldmj}(W; X,\a^n)=\inf \{{\amj(E; X, \a^n)}\mid E \mbox{\ is\ a\ prime\ exceptional\ divisor}$$
$$\ \ \ \ \ \ \ \ \ \ \ \ \ \ \ \ \ \ \ \ \ \ \ \ \ \ \ \ \ \ \ \ \ \ \ \ \ \ \ \ \ \mbox{ \ over } X \mbox{with \ the\ center\ in\ }W\}$$

$${\mldmj}(\eta; X,\a^n)=\inf \{\amj(E; X, \a^n)\mid E \mbox{\ is\ a\ prime\ exceptional\ divisor}$$
$$\ \ \ \ \ \ \ \ \ \ \ \ \ \ \ \ \ \ \ \ \ \ \ \ \ \ \ \ \ \ \ \ \ \ \ \ \ \ \ \ \ \mbox{ \
over } X \mbox{\ with \ the\ center\  }\overline{\{\eta\}}\},$$
when $\dim X\geq2$.
When $\dim X=1$ and the right-hand side is $\geq 0$, then we define $\mldmj$ by the
right-hand side.
Otherwise, we define $\mldmj=-\infty$.
\end{defn}

\begin{rem}
\label{mld}

(i)  We strictly distinguish ``center in $Z$" from ``center $Z$."

(ii) For a point $x\in X$, the pair $(X,\a^n)$ is MJ-canonical (resp. MJ-log-canonical) if and only if
$\mldmj(\eta; X, \a^n)\geq 1$ (resp. $\mldmj(\eta; X, \a^n)\geq 0$) for every $\eta\in X$ such
that $x\in \overline{\{\eta\}}$.

(iii) When $\dim X\geq 2$, we can prove that  $\mldmj(W;X,\a^n)=-\infty$, \\
if
$\mldmj(W;X,\a^n)<0$.

(iv)
If $X$ is normal and of a complete intersection, then the MJ-log discrepancy coincides with the usual
log discrepancy.
Therefore, in this case, the MJ-canonical and MJ-log-canonical cases coincide with the usual canonical and log-canonical cases, respectively.
In particular, if $X$ is non-singular $\mldmj(W;X,\a^n)=\mld(W;X,\a^n)$,
where the right-hand side is the usual minimal log discrepancy.

\end{rem}

\vskip.5truecm
\section{The arc space and jet schemes of a variety}

\noindent
Throughout this section, $X$ is always a $d$-dimensional variety over an algebraically closed field $k$
of an arbitrary characteristic.
In this section, we prove the inversion of adjunction of minimal MJ-log discrepancies by
means of discussions of arc spaces and jet schemes.
This theorem has been independently proved in \cite{dd} and \cite{is}  based on the concept of \cite{EM}, when the base field is  of characteristic 0.
Here, we will present a characteristic-free proof for this theorem.

The basic tool is the arc space. 
So first we introduce arcs and jets of a variety.

\begin{defn}
 Let \( X \) be a scheme of finite type over \( k \) and a $K\supset k$ a field extension.
For  \( m\in \ZZ_{\geq 0} \) a \( k \)-morphism \( \spec K[t]/(t^{m+1})\to X \) is called an  {\it{\( m \)-jet}} of \( X \) and 
 \( k \)-morphism \( \spec K[[t]]\to X \) is called an {\it {arc}} of \( X \).
 The unique point of  \( \spec K[t]/(t^{m+1}) \) and   
 the closed point  of 
 \( \spec K[[t]] \) are both denoted by \( 0 \).
  and the image of $0$ by the jet or the arc is called the {\it center} of the jet or the arc.
\end{defn}
\begin{thm}
\label{existence} 
Let  \( X \) be a scheme of finite type over \( k \), \( {\mathcal S}ch/k \) the category of $k$-schemes and \( {\mathcal S}et \) the category of sets. A contravariant functor  \( F^X_{m}: {\mathcal S}ch/k \to {\mathcal S}et \) is defined as follows:
$$
 F^X_{m}(Z)=\Hom _{k}(Z\times_{\spec k}\spec k[t]/(t^{m+1}), X).
$$
  Then \( F^X_{m} \) is representable by a scheme  \( X_{m} \) of finite type over \( k \),
  therefore, there is a bijection as follows:
$$
 \Hom _{k}(Z, X_{m})\simeq\Hom _{k}(Z\times_{\spec k}
\spec k[t]/(t^{m+1}), X).
$$ 
   This scheme \( X_{m} \) is called the {\it space of \( m \)-jets} or  the {\it jet scheme}
   of  \( X \).

   There exists the projective limit $$X_\infty:=\lim_{\overleftarrow {m}} X_m$$
   and it is called the {\it space of arcs} or the {\it arc space} of $X$.
   There is also a bijection for a $k$-algebra A  as follows:
   $$
 \Hom _{k}(\spec A, X_{\infty})\simeq\Hom _{k}(\spec A[[t]], X).
$$

   \end{thm}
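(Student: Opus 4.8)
The plan is to build the jet schemes $X_m$ first in the affine case by writing down explicit equations, then to glue, and finally to obtain $X_\infty$ as a projective limit along affine transition morphisms. I begin with $X=\AA^N_k=\spec k[x_1,\dots,x_N]$: a $Z$-point of $F^X_m$ is a $k$-morphism $Z\times_{\spec k}\spec k[t]/(t^{m+1})\to\AA^N$, which is the same datum as an $N$-tuple of elements of $\Gamma(Z,\o_Z)[t]/(t^{m+1})$, hence an $N(m+1)$-tuple of global functions on $Z$; thus $F^{\AA^N}_m\simeq\Hom_k(-,\AA^{N(m+1)}_k)$ and $X_m=\AA^{N(m+1)}_k$. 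Next, for a closed subscheme $X=\spec k[x_1,\dots,x_N]/(f_1,\dots,f_r)\subseteq\AA^N$, I would write a candidate jet coordinatewise as $\gamma_i(t)=\sum_{j=0}^m a_{ij}t^j$ and expand
$$f_\ell(\gamma_1(t),\dots,\gamma_N(t))=\sum_{j=0}^m F_{\ell j}(a)\,t^j,$$
where $a$ stands for the collection of variables $a_{ij}$ and each $F_{\ell j}$ is a polynomial in $a$ with coefficients in $k$, obtained by formal substitution and truncation modulo $t^{m+1}$. A morphism $Z\times_{\spec k}\spec k[t]/(t^{m+1})\to\AA^N$ factors through $X$ precisely when all the functions $F_{\ell j}$ vanish on $Z$, so $F^X_m$ is represented by the closed subscheme $X_m:=V(\{F_{\ell j}\})\subseteq\AA^{N(m+1)}_k$, which is of finite type over $k$; the asserted bijection is exactly this identification, and one checks easily that the construction is independent of the chosen embedding and functorial in $X$.

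For a general scheme $X$ of finite type over $k$, the plan is to glue the affine jet schemes. One first observes that $F^X_m$ is a sheaf for the Zariski topology on ${\mathcal S}ch/k$, because $\Hom_k(-\times_{\spec k}\spec k[t]/(t^{m+1}),X)$ is a sheaf in its first argument. The key local point is that, for an open immersion $V\subseteq U$ of $k$-schemes, a jet of $U$ factors through $V$ if and only if its center lies in $V$ --- indeed $Z\times_{\spec k}\spec k[t]/(t^{m+1})$ is a nilpotent thickening of $Z$, so the image of such a morphism coincides with the image of $Z$ --- so that, when $U$ is affine, $F^V_m$ is represented by the open subscheme $\pi_m^{-1}(V)\subseteq U_m$, where $\pi_m\colon U_m\to U_0=U$ is the truncation morphism sending a jet to its center. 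Taking a finite affine open cover $\{U_\alpha\}$ of $X$, the schemes $(U_\alpha)_m$ then glue along the open subschemes $(U_\alpha\cap U_\beta)_m$ to a scheme $X_m$ of finite type over $k$, which represents $F^X_m$ by the sheaf property.

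Finally, for the arc space, the transition morphisms $X_{m+1}\to X_m$ coming from the surjection $k[t]/(t^{m+2})\to k[t]/(t^{m+1})$ are affine (in the affine model above they are restrictions of the linear projections $\AA^{N(m+2)}_k\to\AA^{N(m+1)}_k$), so the projective limit $X_\infty=\lim_{\overleftarrow{m}}X_m$ exists in the category of schemes and, for a $k$-algebra $A$,
$$\Hom_k(\spec A,X_\infty)=\lim_{\overleftarrow{m}}\Hom_k(\spec A,X_m)=\lim_{\overleftarrow{m}}\Hom_k(\spec A[t]/(t^{m+1}),X).$$
It remains to match the right-hand side with $\Hom_k(\spec A[[t]],X)$ via the closed immersions $\spec A[t]/(t^{m+1})\hookrightarrow\spec A[[t]]$. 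When $X$ is affine this is immediate, since $\Hom$ out of a fixed ring preserves the limit $A[[t]]=\lim_{\overleftarrow{m}}A[t]/(t^{m+1})$. To reduce the general case to this one, I would use that every point of $\spec A[[t]]$ is a generization of a point of the closed subscheme $V(t)=\spec A$ (in particular $\spec A[[t]]$ is local when $A$ is), so that any open subset of $\spec A[[t]]$ containing $\spec A$ is the whole scheme; hence an arc factors through an affine open $V\subseteq X$ as soon as the image of its $0$-jet lies in $V$. I expect this passage from affine $X$ to arbitrary $X$ --- carried out by a gluing argument over a finite principal affine cover of $\spec A$ adapted to the $0$-jet --- to be the only genuinely delicate step, and it is where the hypothesis that $X$ be of finite type over $k$ is used. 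Granting it, one obtains the desired bijection $\Hom_k(\spec A,X_\infty)\simeq\Hom_k(\spec A[[t]],X)$, completing the proof.
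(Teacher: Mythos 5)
The paper offers no proof of this theorem at all (it is quoted as standard background), so the comparison here is between your argument and the standard one. Your construction of the jet schemes $X_m$ --- explicit equations in the affine case, then gluing via the observation that a jet of $U$ factors through an open $V\subseteq U$ exactly when its center lies in $V$ (the source being a nilpotent thickening of $Z$) --- and your construction of $X_\infty$ as a projective limit along affine transition morphisms are the standard and correct arguments; up to that point there is nothing to object to.

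The genuine gap is in the final step, the bijection $\Hom_k(\spec A, X_\infty)\simeq \Hom_k(\spec A[[t]],X)$ for a general $k$-algebra $A$ and non-affine $X$, and it sits exactly where you suspected. Your plan is to cover $\spec A$ by principal opens $D(g_j)$ whose images under the $0$-jet lie in affine charts $V_j\subseteq X$, apply the affine case over each $D(g_j)$, and glue. The affine case produces, from the compatible system of truncated jets, a ring map $\o_X(V_j)\to A_{g_j}[[t]]$, i.e.\ a morphism $\spec A_{g_j}[[t]]\to X$. But $\spec A_{g_j}[[t]]$ is \emph{not} an open subscheme of $\spec A[[t]]$: the open subset of $\spec A[[t]]$ lying over $D(g_j)\subseteq\spec A$ is $\spec\bigl(A[[t]][1/g_j]\bigr)$, and the inclusion $A[[t]][1/g_j]\subseteq A_{g_j}[[t]]$ is strict in general (an element such as $\sum_n t^n g_j^{-n}$ has unbounded denominators), because localization does not commute with $t$-adic completion. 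So the locally constructed arcs live on the wrong spaces and there is nothing to glue; one would need each $\o_X(V_j)\to A_{g_j}[[t]]$ to factor through $A[[t]][1/g_j]$, which is not automatic. The statement is nevertheless true: for quasi-compact quasi-separated $X$ (in particular for $X$ of finite type over $k$) the identity $X(A[[t]])=\varprojlim_m X(A[t]/(t^{m+1}))$ is a theorem of Bhatt (``Algebraization and Tannaka duality''), and either invoking it or giving a direct argument is genuinely harder than a Zariski gluing. Note, however, that when $A$ is local --- in particular a field $K$, which is the only case this paper actually uses, since its arcs are morphisms $\spec K[[t]]\to X$ --- the ring $A[[t]]$ is local with closed point in $V(t)$, so the $0$-jet, and hence every jet and the arc itself, factors through a single affine chart, and your argument closes without any gluing.
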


\begin{defn}
 For a variety $X$ over $k$, let $X_m$ $(m\in \NN)$ and $X_\infty$ be
 the $m$-jet scheme and the arc space of $X$.
 Denote the canonical truncation morphisms by
 $\psi_m: X_\infty\to X_m$ and $\pi_m: X_m\to X$.
 In particular we denote the extremal morphism  $\psi_0=\pi_\infty : X_\infty \to X$ by $\pi$.
 We also denote the canonical truncation morphism $X_{m'} \to X_m$ $(m'> m)$ by
 $\psi_{m', m}$.
 To specify the space $X$, we sometimes write $\psi^X_{m', m}$.

\end{defn}

\begin{defn} The pull-back \( \psi_{m}^{-1}(S)\subset X_\infty \) of a
  constructible set \(
  S\subset X_{m} \) $(m\in \ZZ_{\geq 0})$  is called a cylinder.

  A subset $C\subset X_\infty$ is called a thin set if there is a closed subset $Z\subset X$
  with dimension less than the dimension of $X$ such that $C\subset Z_\infty$.

  A subset $C\subset \psi_m^{-1}(S)$ is called an irreducible component of the
cylinder $\psi_m^{-1}(S)$ if it is a maximal irreducible closed subset of the cylinder.
Here, we note that a maximal irreducible closed subset of a cylinder exists 
because  the set consisting irreducible closed subsets in the cylinder is an inductive set with respect to
the inclusion order (the existence of a maximal irreducible component follows from  Zorn's lemma, which we always assume).
If $\cha\ k=0$
or $\dim X \leq 3$, then every cylinder in $X_\infty$ has only a finite number of irreducible components
(a resolution of singularities \cite{CP} is used to prove this result).
\end{defn}

\begin{defn}
For an arc $\gamma\in X_\infty$, 
the order of an ideal $\a\subset \o_X$ measured by $\gamma$
is defined as follows:
let $\gamma^*: \o_{X, \gamma(0)} \to k[[t]]$
be the corresponding
ring homomorphism of $\gamma$.
Then, we define
$$\ord_\gamma(\a)=\sup \{ r\in \ZZ_{\geq 0}\mid \gamma^*(\a)\subset (t^r)\},$$
We define the subsets ``contact loci" in the arc space as follows:
  $$\cont^m(\a)=\{\gamma \in X_\infty \mid \ord_\gamma(\a)=m\}$$
In a similar manner, we define
 $$\cont^{\geq m}(\a)=\{\gamma \in X_\infty \mid \ord_\gamma(\a)\geq m\}$$
By this definition, we can see that
$$\cont^{\geq m}(\a)=\psi_{m-1}^{-1}(Z(\a)_{m-1}),$$
Here, $Z(\a)$ is the closed subscheme defined by the ideal $\a$ in $X$.
Therefore, the contact loci are cylinders in $X_\infty$.

\end{defn}

\begin{defn}
 Let $E$ be a prime divisor over $X$ and
 $\varphi: Y\to X$  be a partial resolution of $X$ on which $E$ appears.
Let $p\in E$ be the generic point.
We define  $C_X(\val_E)=\overline{\varphi_\infty\left((\pi_\infty^Y)^{-1}(p)\right)}$,
also denoted by $N_E$ in the literature on the Nash problem.
Furthermore, for $q\in {\Bbb N}$, let $p_{q-1}\in (\pi_{q-1}^Y)^{-1}((E\cap Y_{reg})_{q-1})$ be the generic point.
Then, 
we define $$C_X(q\cdot\val_E)=\overline{\varphi_\infty(\psi_{q-1}^Y)^{-1}(p_{q-1})}$$
and call it the maximal divisorial set corresponding to the divisorial valuation
$q\cdot\val_E$. This definition is the same as that in \cite{DEI} and \cite{imax} in case $\cha\ k=0$.

An irreducible closed subset $C\subset X_\infty$ is called a divisorial set, if there exist $q\in \NN$
and a prime divisor $E$ over $X$ such that
the divisorial valuation $q\cdot\val_E$ is given by $\ord_\alpha$, where $\alpha$ is the generic point of $C$.

the generic point $\alpha \in C$ gives a divisorial valuation $q\cdot\val_E$ by $\ord_\alpha$.
A maximal divisorial set $C_X(q\cdot\val_E)$ is the maximal set among all divisorial sets corresponding to the valuation $q\cdot\val _E$.
\end{defn}

\begin{prop}[{\cite[Lemma 4.1]{DL}},
{\cite[before lemma 3.2]{Re1}} or {\cite[3.4]{Re2}},{\cite[Proposition 4.1]{EM}}]
\label{4.1}
For $e\in \ZZ_{\geq 0}$, the following subsets in $X_\infty$ and in $X_m$ are defined by
using the Jacobian ideal $\jx$:
 $$X_\infty^e:=\{\gamma\in X_\infty\mid ord_\gamma(\jx)= e\}\ \ \ \ \mbox{ and}
\ \ \ \
X_{m,\infty}^e:=\psi_m(X_\infty^e).$$
Then, there exists $c\in \NN\setminus \{0\}$ such that  for an integer $m\geq ce$ the canonical map $X_{m+1,\infty}^e\to X_{m,\infty}^e$ is a piecewise
trivial fibration with fibers isomorphic to $\AA^d$, where $d=\dim X$.
Therefore, we also know
that $X_{m+1,\infty}^{\leq e}\to X_{m,\infty}^{\leq e}$ is a piecewise
trivial fibration with fibers isomorphic to $\AA^d$, where we define
$$X_\infty^{\leq e}:=\{\gamma\in X_\infty\mid ord_\gamma(\jx)\leq e\}\ \ \ \ \mbox{ and}
\ \ \ \
X_{m,\infty}^{\leq e}:=\psi_m(X_\infty^{\leq e}).$$
\end{prop}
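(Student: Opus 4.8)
The plan is to follow the arguments of \cite{DL} and \cite{EM} (see also \cite{Re1}), which rely only on Smith normal form over $k[[t]]$, Newton's method and Greenberg's theorem, and are therefore characteristic-free. First I would reduce to a local embedded situation: the assertion is local on $X$ and compatible with the truncation morphisms, so I may assume $X\subset\AA^M=\spec k[x_1,\dots,x_M]$ is closed, $X=V(f_1,\dots,f_r)$, and use that $\jx$ is generated in $\o_X$ by the images of the $(M-d)\times(M-d)$ minors $\Delta_1,\dots,\Delta_s$ of the $r\times M$ Jacobian matrix $J=(\partial f_i/\partial x_j)$, so that $\ord_\gamma(\jx)=\min_\ell\ord_\gamma(\Delta_\ell)$ for every $\gamma\in X_\infty$. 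Consequently $X_\infty^e$ decomposes into the finitely many cylinders on which the minimum $e$ is attained by one fixed minor, and it suffices to treat each such piece; fix one, with distinguished minor $\Delta$ coming from a set $S$ of $M-d$ rows and a set $T$ of $M-d$ columns, and split the coordinates accordingly as $\{x_1,\dots,x_M\}=\{x_j:j\in T\}\sqcup\{y_1,\dots,y_d\}$.

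On this piece I would analyse the truncation morphism by the implicit-function/Hensel method. For an arc $\gamma$ in the piece, the matrix $J(\gamma(t))$ has entries in $k[[t]]$ and rank $M-d$ over $k((t))$, because $\ord_\gamma(\jx)=e<\infty$ forces the generic point of the image of $\gamma$ into $X_{reg}$; by Smith normal form over the discrete valuation ring $k[[t]]$ its elementary divisors are $t^{e_1},\dots,t^{e_{M-d}}$ with $e_1+\dots+e_{M-d}=e$, so each $e_i\leq e$, while the submatrix $(\partial f_i/\partial x_j)_{i\in S,\,j\in T}$ has determinant $\Delta$ of order exactly $e$ along $\gamma$. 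After this normalisation, the equations governing how an $m$-jet extends to order $m+1$ (or to an arc) split into $d$ unconstrained ``$y$-directions'' and $M-d$ ``$x$-directions'', each of the latter determined up to an ambiguity bounded by $t^{e_i}$; equivalently, Newton's method recovers the $x$-part from the $y$-part once the $y$-part is known to order roughly $2e$. Since $X$ is pure of dimension $d$, the remaining equations $f_i$, $i\notin S$, impose nothing extra near the generic point of the image of $\gamma$, so the solutions produced are genuine arcs of $X$ with $\ord(\jx)=e$.

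Carrying this out quantitatively, I expect to obtain a constant $c\in\NN\setminus\{0\}$, depending only on $M$, $d$ and the degrees of the $f_i$, such that for $m\geq ce$ the following hold: the image $X_{m,\infty}^e=\psi_m(X_\infty^e)$ is constructible in $X_m$ and is ``defined at level $m$'', so that the $t^{e_i}$-ambiguities have already been absorbed; and over every point of it the fibre of $X_{m+1,\infty}^e\to X_{m,\infty}^e$ is parametrised freely by the degree-$(m+1)$ coefficient of $y_1,\dots,y_d$, the corresponding coefficients of $x_j$, $j\in T$, being then forced, so that this fibre is an $\AA^d$. Organising the choice of minor $\Delta$ and the Smith/Newton normalisation into finitely many algebraic families over constructible strata of $X_{m,\infty}^e$ then yields the piecewise triviality. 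The statement for the sets $X_{m,\infty}^{\leq e}$ follows from the finite stratification $X_\infty^{\leq e}=\bigsqcup_{e'\leq e}X_\infty^{e'}$ into locally closed cylinders and the case already proved, applied to each $e'\leq e$.

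The hard part will be the quantitative bookkeeping concealed in the constant $c$. One has to show, uniformly over the strata, that for $m\geq ce$ the relevant $m$-jets really do lift to arcs of $X$ with Jacobian order $e$ (so that $X_{m,\infty}^e$ is the honest image of the arc locus, not a larger set of jets), and that no spurious degree-$(m+1)$ perturbations survive --- a subtlety already visible for $X=V(xy)\subset\AA^2$, where over an $m$-jet centred at the origin the naive linear fibre of $X_{m+1}\to X_m$ is two-dimensional whereas only a one-dimensional sublocus extends to arcs of $X$ --- so that the fibre is exactly $\AA^d$ and the Smith/Newton data can be chosen algebraically on each stratum. Controlling this shift-by-$t^e$ phenomenon is exactly the content of the cited results; since their proofs use Greenberg's theorem and Smith normal form over $k[[t]]$ rather than resolution of singularities, they remain valid in arbitrary characteristic.
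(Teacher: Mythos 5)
Your proposal is correct and coincides with what the paper does: the paper gives no independent proof of this proposition but defers entirely to the cited arguments of Denef--Loeser, Reguera and Ein--Musta\c{t}\u{a}, and your sketch is a faithful reconstruction of exactly that Hensel/Smith-normal-form argument (including the essential subtlety that the $\AA^d$-fibre is the fibre over jets liftable to arcs with Jacobian order $e$, not the naive fibre of $X_{m+1}\to X_m$), which is characteristic-free for the reasons you state.
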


This proposition is stated in \cite{DL} under the condition that $\cha\ k=0$; however in
\cite{Re1} and \cite{EM}, 
its proof was also confirmed
to apply to perfect fields in the positive-characteristic case.\\

\begin{rem}
Using Proposition \ref{4.1}, {\it stable points} of the space of arcs were defined as follows  in \cite{Re2}: let $\gamma$ be a point of $X_\infty$ (i.e., $\gamma$ is a prime ideal of ${\cal O}_{X_\infty}$) and let $Z(\gamma)$be the set of zeros of $\gamma$ on $X_\infty$; then, $\gamma$ is a stable point if there exists $m_0 \in \NN$ and $G \in {\cal O}_{X_\infty}$, $G \not \in \gamma$, $G \in {\cal O}_{X_{m_0}}$, such that, for $m \geq m_0$, the map $\psi_{m+1,m}: X_{m+1} \rightarrow X_m$ induces a trivial fibration $\overline{\psi_{m+1}(Z(\gamma))} \cap (X_{m+1})_G \rightarrow \overline{\psi_{m}(Z(\gamma))} \cap (X_{m})_G$ with fiber $\AA^d$, where $(X_m)_G$ is the open subset $X_m \setminus Z(G)$ of $X_m$.\\
\end{rem}

The following is an easy consequence of the previous proposition when $char\ k=0$,
since in that case a cylinder has only finite number of irreducible components (\cite[Proposition 3.6]{DEI}), and
therefore, an irreducible component of a cylinder is regarded generically as a cylinder.
Herein, we give a proof that does not use the finiteness of the irreducible components of
a cylinder; thus, this proof also works  for a positive characteristic case.

\begin{lem} For an irreducible component $C$  of a cylinder in $X_\infty$ such that $C\not\subset \sing(X)_\infty $, its generic point $\gamma$ is a stable point of $X_\infty$. In particular,
there exists $e$ such that
$$
{C}_m^{\le e} := \psi_m(C) \cap X_{m,\infty}^{\le e}
$$
is a nonempty open subset of $\psi_m(C)$ and the codimension of ${C}_m^{\le e}$
inside $X_{m,\infty}^{\le e}$ stabilizes for $m\gg e$.
\end{lem}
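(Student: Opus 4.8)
The plan is to reduce everything to the fibration statement of Proposition \ref{4.1}, using only the existence of a maximal irreducible closed subset in a cylinder (Zorn's lemma) and the behavior of the Jacobian-order stratification, so that no finiteness of irreducible components is needed. First I would write $C = \psi_m^{-1}(S) \supseteq C$ as an irreducible component of a cylinder, with $S \subseteq X_{m_0}$ constructible for some fixed level $m_0$; since $C \not\subset \sing(X)_\infty$, the generic point $\gamma$ of $C$ satisfies $\ord_\gamma(\jx) = e < \infty$ for some $e$, so $\gamma \in X_\infty^e$ and in fact $C$ meets the cylinder $X_\infty^{\le e}$ in a dense (open) subset, namely $C \cap X_\infty^{\le e}$ is the nonempty open locus where $\ord(\jx) \le e$. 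Replacing $C$ by this open dense piece does not change the generic point $\gamma$.

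Next I would show that $\gamma$ is a stable point in the sense of the Remark following Proposition \ref{4.1}. The key point is that for $m \ge \max(m_0, ce)$ (with $c$ the constant of Proposition \ref{4.1}), the truncation $\psi_{m+1,m}\colon X_{m+1,\infty}^{\le e} \to X_{m,\infty}^{\le e}$ is a piecewise trivial $\AA^d$-fibration. Since $C \cap X_\infty^{\le e}$ is dense in $C$ and $C$ is irreducible, $\overline{\psi_m(C)} = \overline{\psi_m(C \cap X_\infty^{\le e})}$ is an irreducible closed subset of $X_{m,\infty}^{\le e}$ (more precisely of its closure), and the generic point of $\overline{\psi_m(C)}$ lies in the open dense stratum of one of the finitely many pieces of the piecewise trivial fibration. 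Choose $G \in \o_{X_{m_0}}$, $G \notin \gamma$, cutting out that open piece on which the fibration $\psi_{m+1,m}$ is literally trivial with fiber $\AA^d$; then on $(X_{m+1})_G$ the map $\overline{\psi_{m+1}(C)} \cap (X_{m+1})_G \to \overline{\psi_m(C)} \cap (X_m)_G$ is a trivial $\AA^d$-fibration for all $m \gg e$. This is exactly the defining property of a stable point, so $\gamma$ is stable; and because $C$ is a \emph{maximal} irreducible closed subset of the cylinder, $C = \overline{Z(\gamma)}$ in $X_\infty$, so the stable point $\gamma$ "is" the component $C$.

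Finally I would deduce the codimension statement. Set ${C}_m^{\le e} = \psi_m(C) \cap X_{m,\infty}^{\le e}$; by the above this is nonempty and, being the preimage of the open piece under a trivial fibration, it is open and dense in $\psi_m(C)$. From the trivial $\AA^d$-fibration $\overline{\psi_{m+1}(C)} \to \overline{\psi_m(C)}$ over $(X_m)_G$ and the parallel $\AA^d$-fibration $X_{m+1,\infty}^{\le e} \to X_{m,\infty}^{\le e}$ from Proposition \ref{4.1}, both ambient and subvariety dimensions grow by exactly $d$ when passing from level $m$ to level $m+1$, for every $m \ge \max(m_0, ce)$. Hence $\codim\bigl({C}_m^{\le e},\, X_{m,\infty}^{\le e}\bigr)$ is eventually constant in $m$, which is the stabilization claimed. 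The main obstacle is the bookkeeping in the second paragraph: one must be careful that the single function $G$ chosen at level $m_0$ continues to isolate the relevant trivial piece at every higher level $m$, i.e. that the piecewise-trivial structures at successive levels are compatible; this is handled by the fact that the pieces at level $m$ pull back from level $\max(m_0,ce)$ under the truncation maps, so one stratification refines all the others and a single $G$ suffices.
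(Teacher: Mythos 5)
Your proposal is correct and follows essentially the same route as the paper's proof: finiteness of $\ord_\gamma(\jx)=e$ at the generic point, openness and nonemptiness of $C_m^{\le e}$ in $\psi_m(C)$, and then the piecewise trivial $\AA^d$-fibration of Proposition \ref{4.1}. The paper's own argument is a two-line sketch of exactly this reduction; you have additionally filled in the choice of the function $G$ and the compatibility of the trivializing strata across levels, details the paper leaves implicit.
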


\begin{proof} This lemma is proved in the same way as in the case of characteristic 0.
Let $C$ be an irreducible component of a cylinder $\Gamma$ in $X_\infty$ such that $C\not\subset \sing(X)_\infty $.
Let $\gamma\in C$ be the generic point. Then by $\gamma\not\in (\sing X)_\infty$,
 we have
$\ord_\gamma (\jx)=e > 0$. Then, ${C}_m^{\le e}$ is a nonempty open subset of $\psi_m(C)$. From this and Proposition 3.5, the result follows.
\end{proof}

Here, we show a result about a stable point, for the readers who are interested in this direction.

\begin{prop}
If $\gamma$ is a stable point of $X_\infty$, then there exists an irreducible cylinder $C \subset X_\infty$ whose generic point is $\gamma$. 
\end{prop}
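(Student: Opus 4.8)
The plan is to reverse-engineer an irreducible cylinder from the defining data of a stable point. Recall that being a stable point $\gamma$ provides an integer $m_0$ and an element $G\in\o_{X_{m_0}}$, $G\notin\gamma$, such that for all $m\ge m_0$ the map $\psi_{m+1,m}$ restricts to a trivial $\AA^d$-fibration $\overline{\psi_{m+1}(Z(\gamma))}\cap(X_{m+1})_G\to\overline{\psi_m(Z(\gamma))}\cap(X_m)_G$. First I would set $S_m:=\overline{\psi_m(Z(\gamma))}$ for each $m\ge m_0$; this is an irreducible closed subset of $X_m$ (it is the closure of the image of an irreducible set), and its generic point is $\psi_m(\gamma)$ by construction. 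The candidate cylinder is then $C:=\psi_{m_0}^{-1}(S_{m_0})$, or rather a suitable open-dense trimming of it; I expect the correct object to be the irreducible component of the cylinder $\psi_{m_0}^{-1}(S_{m_0})$ that contains $\gamma$, and the content of the proof is to check that this component has $\gamma$ as its generic point and is itself (generically) a cylinder.

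The key steps, in order, are: (1) show that the trivial-fibration property propagates the irreducibility and the ``correct'' dimension upward, so that $\psi_{m+1,m}^{-1}(S_m)\cap(X_{m+1})_G$ is irreducible with generic point $\psi_{m+1}(\gamma)$ and closure $S_{m+1}$ — this is because a trivial $\AA^d$-bundle over an irreducible base is irreducible; (2) conclude that $\psi_m^{-1}(\psi_{m_0}^{-1}(S_{m_0}))$, intersected with the locus where $G\ne 0$, stabilizes in the sense that its image in each $X_m$ is exactly $S_m$ up to the open condition $G\ne 0$; (3) identify $\overline{\{\gamma\}}=Z(\gamma)$ inside $\psi_{m_0}^{-1}(S_{m_0})$: the point $\gamma$ lies in this cylinder since $\psi_{m_0}(\gamma)\in S_{m_0}$, and any arc specializing from $\gamma$ lies in $Z(\gamma)$ by definition, so $Z(\gamma)$ is an irreducible closed subset of $\psi_{m_0}^{-1}(S_{m_0})$; (4) show $Z(\gamma)$ is in fact an irreducible \emph{component} of this cylinder, equivalently that $\gamma$ is not a specialization of any larger point of the cylinder — here the stable-fibration hypothesis over the open set $(X_m)_G$ forces $\psi_m(Z(\gamma))$ to be dense in $S_m\cap(X_m)_G$, which pins the generic point; (5) finally, replacing $C=\psi_{m_0}^{-1}(S_{m_0})$ by its open subset $C\cap\psi_{m_0}^{-1}((X_{m_0})_G)$ — which is again a cylinder, being the preimage of the constructible set $S_{m_0}\cap(X_{m_0})_G$ — and observing that on this open cylinder $\gamma$ is the generic point of one, hence (after discarding the other components, which are closed) the whole, of an irreducible cylinder. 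One should be slightly careful that discarding components keeps us inside the class of cylinders: this is where the remark that ``an irreducible component of a cylinder is generically a cylinder'' would normally be invoked, but in positive characteristic one instead argues directly using the fibration structure, which makes $\overline{\psi_m(Z(\gamma))}\cap(X_m)_G$ \emph{equal} to (not merely generically equal to) a single constructible piece whose preimage is the desired cylinder.

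The main obstacle I anticipate is step (4)–(5): ensuring that $Z(\gamma)$, which a priori is only a closed irreducible subset of the cylinder $\psi_{m_0}^{-1}(S_{m_0})$, can be realized \emph{itself} as a cylinder, not merely as a component of one. In characteristic $0$ this is automatic because cylinders have finitely many components and one can separate them by a constructible open condition at finite level; in positive characteristic finiteness of components is unavailable, so the argument must extract the needed open condition directly from the single element $G$ furnished by stability. Concretely, the claim to nail down is that $\psi_{m}^{-1}\big(S_m\cap (X_m)_G\big)$ stabilizes (independently of $m\ge m_0$, by the fibration property) to an irreducible cylinder with generic point $\gamma$; verifying that the set-theoretic image and its closure behave as claimed under the truncation maps $\psi_{m+1,m}$, using only the triviality of the fibration over $(X_m)_G$ and not any global finiteness, is the technical heart of the argument. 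Everything else is formal manipulation of closures of images and preimages under the $\psi_m$.
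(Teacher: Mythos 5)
Your overall strategy is not the one the paper uses, and it has a genuine gap exactly at the place you flag as the ``technical heart.'' The paper's proof does not work with the fibration property at all: it invokes Reguera's finiteness theorem for stable points (\cite[Theorem 4.1]{Re1}, see also \cite[3.10]{Re2}), which says that for a stable point $\gamma$ there exist $G\notin\gamma$ and finitely many $L_1,\dots,L_r\in\o_{X_\infty}$ with $\gamma\,(\o_{(X_\infty)_{\mathrm{red}}})_G=(L_1,\dots,L_r)\,(\o_{(X_\infty)_{\mathrm{red}}})_G$. Since $\o_{X_\infty}=\varinjlim \o_{X_n}$, the $G,L_i$ live in some $\o_{X_{n_0}}$, and the cylinder over $S=Z(L_1,\dots,L_r)\cap(G\neq 0)\subset X_{n_0}$ is then literally $V(\gamma)\cap(G\neq 0)$, hence irreducible with generic point $\gamma$. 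The entire content of the proposition is this finite-generation statement; it is a nontrivial theorem, not a formal consequence of the definition of stability.

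Your route tries to extract the same conclusion from the trivial-fibration property alone, and this cannot work as written. The hypothesis controls the restriction of $\psi_{m+1,m}$ to $\overline{\psi_{m+1}(Z(\gamma))}\cap(X_{m+1})_G$; it says nothing about the full preimage $\psi_{m+1,m}^{-1}\bigl(S_m\cap(X_m)_G\bigr)$, which for singular $X$ is in general strictly larger than $S_{m+1}\cap(X_{m+1})_G$ (the fibers of $\psi_{m+1,m}$ over $X_m$ need not be $\AA^d$ off the stable locus). Consequently your candidate cylinder $\psi_{m_0}^{-1}\bigl(S_{m_0}\cap(X_{m_0})_G\bigr)$ may contain points not specializing from $\gamma$, and nothing in your argument rules out a point of this cylinder that properly dominates $\gamma$. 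Your step (4) is circular: $S_m$ is \emph{defined} as the closure of $\psi_m(Z(\gamma))$, so density of $\psi_m(Z(\gamma))$ in $S_m\cap(X_m)_G$ is automatic and does not ``pin the generic point'' of the cylinder at level $\infty$. To show that $Z(\gamma)\cap(G\neq 0)$ is cut out at a \emph{single finite level} you need precisely that $\gamma$ is finitely generated after localization, i.e.\ the theorem of \cite{Re1}; either cite it, as the paper does, or be prepared to reprove it, which is a substantially harder task than the manipulations of images and closures you describe.
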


\begin{proof}  We may assume that $X$ is affine. Let  $\gamma$ be a stable point of $X_\infty$. Then, by (\cite[Theorem 4.1]{Re1}) (see also the finiteness property of the stable points in \cite[3.10]{Re2}), there exist $G \in {\cal
O}_{X_\infty}$, $G \not \in \gamma$ and $L_1, \ldots, L_r \in {\cal
O}_{X_\infty}$ such that
\begin{equation}\label{eq1}
\gamma  \left({\cal O}_{(X_\infty)_\text{red}} \right)_G \ = \ (L_1, \ldots, L_r) \ \left({\cal O}_{(X_\infty)_\text{red}} \right)_G .
\end{equation}
Since ${\cal O}_{X_\infty} = \lim_{\rightarrow} {\cal O}_{X_n}$, there exists $n_0 \in \NN$ such that $G, L_1, \ldots, L_r \in {\cal O}_{X_{n_0}}$. Let us consider the constructible subset 
$$
S:= Z(L_1, \ldots , L_r ) \cap (G \neq 0) \ \subset \ {X_{n_0}}.
$$
Then $C = \psi_{n_0}^{-1} (S)$ is an irreducible cylinder whose generic point is $\gamma$ (see (\ref{eq1}) and  the definition of cylinder in \cite[beginning of section 5]{EM}).
\end{proof}

\begin{defn}  For a variety $X$, let $C$ be an irreducible component of a cylinder in $X_\infty$
such that $C\not\subset (\sing X)_\infty$,
then we define the codimension of $C$ in $X_\infty$ as follows:
$$\codim(C, X_\infty):=\codim(C_{m}^{\leq e}, X_{m,\infty}^{\le e})$$
for $m\gg e$, where $e=\ord_\gamma (\jx) $ for a generic point $\gamma\in C$.

Let $\Gamma\subset X_\infty$ be a cylinder not contained in $\sing(X)_\infty$,
then we define the codimension of the cylinder $\Gamma$ in $X_\infty$ as the minimal
value of the codimensions of the irreducible components not contained in $\sing(X)_\infty$.
\end{defn}

The following lemma was proved in \cite[Lemma 3.4]{DL} and  in \cite[Theorem 6.2,
Lemma 6.3]{EM}.
The former one was stated under the condition of characteristic 0, while the latter ones were stated under an arbitrary characteristic.
Note that the statement of Lemma 6.2 in \cite{EM} assumed the properness of the morphism $f$, but the properness was not actually used in the proof.

\begin{lem} [\cite{DL}, \cite{EM}]
\label{dl3.4}
Let $f:Y \to X$ be a birational morphism from a smooth variety $Y$ and
$f_m: Y_m\to X_m$ the induced morphism.
Let $\gamma\in Y_\infty$ be any arc such that $\tau:=\ord_\gamma(\widehat{K}_{Y/X})
< \infty$.
Then for 
$m\gg 0$, letting $\gamma_m=\psi_m^Y(\gamma)$, we have
$$f_m^{-1}(f_m(\gamma_m))\simeq \AA^\tau.$$
Moreover, for every $\gamma'_m\in f_{m}^{-1}(f_m(\gamma_m))$, we have $\psi^Y_{m,m-\tau}(\gamma_m)
=\psi^Y_{m,m-\tau}(\gamma'_m)$, where $\psi^Y_{m,m-\tau}:Y_m\to Y_{m-\tau}$ is
the canonical truncation morphism.
\end{lem}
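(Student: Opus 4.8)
The plan is to reduce to a local computation on a smooth chart and then analyse the fibres by linear algebra over the discrete valuation ring $k[[t]]$; this is the argument of \cite{DL} and \cite{EM}, and it uses no resolution of singularities.

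\emph{First, reduce to a local model.} Since the statement is local around $y_0:=\gamma(0)$ and $x_0:=f(y_0)$, I would take $X$ affine and fix a closed embedding $X\hookrightarrow\AA^N$. Because $X_m\hookrightarrow(\AA^N)_m$ is a closed immersion, hence injective, the scheme $f_m^{-1}(f_m(\gamma_m))$ is unchanged if one replaces $X$ by $\AA^N$ and $f$ by the composite $g\colon Y\to\AA^N$ (which is birational onto its image $X$). As $Y$ is smooth of dimension $d$, after shrinking one may pick an \'etale map $Y\to\AA^d$; since \'etale maps induce isomorphisms on relative jet schemes, an $m$-jet of $Y$ centred at $y_0$ becomes a tuple $(y_1(t),\dots,y_d(t))$ with $y_i(t)\in t\,k[t]/(t^{m+1})$, and $g$ is given by power series $z_i=g_i(y_1,\dots,y_d)$. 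One checks that $\o_Y(-\widehat{K}_{Y/X})$ is the ideal $\mathfrak d$ generated by the $d\times d$ minors $J_I$ of $(\partial g_i/\partial y_j)$ (the map $g^*\wedge^d\Omega_{\AA^N}\to f^*\wedge^d\Omega_X\to\omega_Y$ is surjective and sends $dz_I$ to $J_I\,(dy_1\wedge\dots\wedge dy_d)$), so with $\bar y_i(t):=\gamma^*(y_i)$ we have $\tau=\ord_\gamma(\mathfrak d)=\min_I\ord_t J_I(\bar y(t))$.

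\emph{Next, do the linear algebra and establish the easy inclusion.} Put $M(t):=(\partial g_i/\partial y_j(\bar y(t)))$, an $N\times d$ matrix over $k[[t]]$ whose ideal of maximal minors is $(t^\tau)$; Smith normal form gives $M=A\,D\,B$ with $A\in\mathrm{GL}_N(k[[t]])$, $B\in\mathrm{GL}_d(k[[t]])$ and $D$ ``diagonal'' with entries $t^{e_1},\dots,t^{e_d}$, $e_i\ge0$, $\sum_i e_i=\tau$. A jet with $f_m(\gamma_m')=f_m(\gamma_m)$ corresponds, writing $y_i'(t)=\bar y_i(t)+u_i(t)$, to a solution of
$$
M(t)\,u(t)+Q(u(t))\equiv0\pmod{t^{m+1}},\qquad\ord_t Q(u)\ge 2\,\ord_t u ,
$$
where $Q$ collects the order-$\ge 2$ contributions in $u$ in the Taylor expansion of $g$ written with Hasse derivatives (no division by integers, so this works in any characteristic). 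Using $M=ADB$ one sees that the kernel $L$ of $v\mapsto M(t)v\bmod t^{m+1}$ is a $k$-linear space of dimension $\sum_i e_i=\tau$, every $v\in L$ having $\ord_t v_i\ge m+1-\tau$; so for $m\ge2\tau-1$ the quadratic part kills $L$ ($\ord_t Q(v)\ge m+1$), $L$ lies in the fibre, and the order bound is exactly the assertion $\psi^Y_{m,m-\tau}(\gamma_m')=\psi^Y_{m,m-\tau}(\gamma_m)$ for $\gamma_m'\in L$.

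\emph{The hard inclusion is the main obstacle.} What remains --- and is the crux --- is the reverse inclusion: that for $m\gg0$ every solution $u$ is either $0$ or has $\ord_t u\ge m+1-\tau$. Once this is known, the fibre sits where $Q$ vanishes, is cut out by the linear equations $M(t)u\equiv0$, hence equals $L\cong\AA^\tau$ with its reduced affine structure, and combining with the easy inclusion finishes the proof. I would first extract a cheap estimate: writing $u=t^\ell\bar u$ with $\bar u(0)\ne0$, the factorization $M=ADB$ gives $\ord_t(M(t)\bar u(t))\le\tau$, so $\ord_t(M(t)u)\le\ell+\tau$; if $\ell\le m-\tau$ this is $\le m$, and comparing with $\ord_t Q(u)\ge 2\ell$ forces $\ell\le\tau$. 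Thus a nonzero solution has $\ord_t u\le\tau$ or $\ord_t u\ge m+1-\tau$, and only solutions with $1\le\ord_t u\le\tau$ (or with $\ord_t u=0$, i.e. the wrong centre) need to be excluded for $m$ large. This is where I expect the real difficulty: one must use the whole system of $N$ equations --- equivalently, that $f$ is birational and not merely generically finite --- since a low-order spurious solution would, after enough truncation levels, contradict the fact that $\gamma$ is the unique arc of $Y$ lying over the arc $f_\infty(\gamma)$ of $X$ (this uniqueness holds because $\ord_\gamma\widehat{K}_{Y/X}<\infty$ forces the generic point of $\gamma$ to be a smooth point of $X$ at which $f$ is an isomorphism, so the valuative criterion for the proper morphism $f$ applies). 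Making this incompatibility effective, valid once $m$ exceeds a bound linear in $\tau$, is the iterative order estimate carried out in \cite{DL} and \cite{EM}, and is the technical heart of the lemma.
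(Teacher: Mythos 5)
For context: the paper does not prove this lemma at all. It is imported verbatim from \cite{DL} (Lemma 3.4) and \cite{EM} (Theorem 6.2, Lemma 6.3), with only the remarks that the proof in \cite{EM} is already characteristic-free and that properness of $f$ is not actually used there. Your reduction to a local model, the identification of $\o_Y(-\widehat{K}_{Y/X})$ with the ideal of maximal minors of the Jacobian of $Y\to X\hookrightarrow \AA^N$, the Smith normal form $M=ADB$ over $k[[t]]$, and the Hasse-derivative Taylor expansion are indeed the ingredients of the cited proofs, and your ``easy inclusion'' (the $\tau$-dimensional kernel space $L$, consisting of vectors of order $\geq m+1-\tau$, sits inside the fibre) is correct.

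However, what you isolate as ``the hard inclusion'' is not a finishing touch but the entire content of the lemma: both the isomorphism $f_m^{-1}(f_m(\gamma_m))\simeq\AA^\tau$ and the ``Moreover'' clause are exactly the assertion that no solution $u\neq 0$ with $\ord_t u\leq\tau$ survives for $m\gg 0$, and you do not prove it. Worse, the mechanism you propose for closing the gap cannot work as stated. First, $f$ is not assumed proper here --- the paper stresses this point precisely because it applies the lemma (in Theorem \ref{cod}) to the restriction of a partial resolution to an open subset --- so the valuative criterion is unavailable. Second, even granting uniqueness of the arc of $Y$ over $f_\infty(\gamma)$, a spurious $m$-jet in the fibre need not lift to an arc, so no contradiction is produced at any finite level. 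Third, the dichotomy $\ell\leq\tau$ or $\ell\geq m+1-\tau$ cannot be resolved by linear algebra on $M$ alone: for the normalization of a nodal curve one has $\tau=0$ for every arc, yet a jet on the other branch can agree with $f_m(\gamma_m)$ up to the order of contact of $f_\infty(\gamma)$ with that branch. This shows that the threshold ``$m\gg 0$'' necessarily depends on $\gamma$ (via its order of contact with the locus where $f$ fails to be an isomorphism, which is finite but not bounded by $\tau$), an ingredient entirely absent from your set-up; it is exactly the extra parameter $e'$ in the stratification $\Delta_{e,e'}$ of \cite{DL} and the corresponding order estimate in \cite{EM}. So the proposal is an accurate outline of the right strategy, but the step it defers is the technical heart of the lemma and the suggested route to it would fail.
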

\begin{thm}
\label{cod}
Let $E$ be a prime divisor over $X$ and $g:\tilde X\to X$ be a partial resolution of $X$
such that $E$ appears on $\tilde X$.
Then $C_X(q\cdot\val_E)$ is a cylinder of $X_\infty$ of codimension
$$\codim (C_X(q\cdot\val_E), X_\infty)=q\cdot (\val_E(\widehat K_{Y/X})+1).$$

\end{thm}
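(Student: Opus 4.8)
The strategy is to reduce the computation of $\codim(C_X(q\cdot\val_E),X_\infty)$ to a computation on a smooth model, where Lemma~\ref{dl3.4} applies directly, and then to track how the Jacobian-order stratification interacts with the pushforward $g_\infty$. First I would replace the given partial resolution $g\colon\tilde X\to X$ by a smooth birational model $f\colon Y\to X$ dominating $\tilde X$ (using that $E$ still appears on $Y$ and that, by the construction in the definition of Mather discrepancy, we may assume $f$ factors through the Nash blow-up, so $\widehat K_{Y/X}$ is a genuine Cartier divisor on $Y$ with coefficient $\hke$ along $E$). Since the formation of $C_X(q\cdot\val_E)$ and of the codimension is insensitive to passing between partial resolutions that agree generically along $E$, it suffices to prove the formula with $Y$ in place of $\tilde X$, i.e. to show $\codim(C_X(q\cdot\val_E),X_\infty)=q\cdot(\val_E(\widehat K_{Y/X})+1)=q\cdot(\hke+1)$.

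Next I would unwind the definitions. On the smooth variety $Y$, the maximal divisorial set $C_Y(q\cdot\val_E)$ is, by definition, $\overline{(\psi^Y_{q-1})^{-1}(p_{q-1})}$ for the generic point $p_{q-1}$ of the $(q-1)$-jets along $E\cap Y_{reg}$; because $Y$ is smooth this is a cylinder and a direct count of fiber dimensions of the truncation maps of the smooth scheme $Y_\infty$ gives $\codim(C_Y(q\cdot\val_E),Y_\infty)=q\cdot\dim\{E\}^{c}=q$ in the appropriate normalization — more precisely, the codimension of the cylinder cut out by "order along $E$ is $\geq q$" is $q$ on a smooth ambient space. Then $C_X(q\cdot\val_E)=\overline{f_\infty(C_Y(q\cdot\val_E))}$. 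Let $\gamma\in Y_\infty$ be a generic arc of $C_Y(q\cdot\val_E)$; then $\ord_\gamma(\widehat K_{Y/X})=q\cdot\val_E(\widehat K_{Y/X})=q\hke=:\tau$, which is finite, so Lemma~\ref{dl3.4} applies: for $m\gg0$ the fiber $f_m^{-1}(f_m(\gamma_m))\cong\AA^\tau$, and $\psi^Y_{m,m-\tau}$ is constant on this fiber.

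The heart of the argument is then to compare codimensions across $f_\infty$ using the Jacobian stratification from Proposition~\ref{4.1}. Choose $e$ with $\ord_\gamma(\jx)=e$; since $C_X(q\cdot\val_E)\not\subset(\sing X)_\infty$ we have $e<\infty$ and the codimension of $C_X(q\cdot\val_E)$ is computed inside $X^{\le e}_{m,\infty}$ for $m\gg e$. I would argue that, up to shrinking to the open stratum, $f_m$ restricts to a morphism $\psi^Y_m(C_Y(q\cdot\val_E))\to X^{\le e}_{m,\infty}$ whose image is dense in $\psi^X_m(C_X(q\cdot\val_E))$, with general fiber of dimension exactly $\tau$ by Lemma~\ref{dl3.4}; combining this with the fact that $Y_{m}\to X_m$ restricted over the relevant strata behaves like $Y_{m,\infty}^{\le e'}\to X_{m,\infty}^{\le e}$ and the piecewise-trivial $\AA^d$-fibrations of Proposition~\ref{4.1} (which cancel on both sides), one gets
\[
\codim(C_X(q\cdot\val_E),X_\infty)=\codim(C_Y(q\cdot\val_E),Y_\infty)+\tau = q+q\hke = q\cdot(\hke+1),
\]
as desired. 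The main obstacle I anticipate is the bookkeeping in this last step: making the "general fiber has dimension $\tau$" statement for $f_m$ precise on the nose (not just generically) and showing the relevant images are constructible of the expected dimension without invoking finiteness of the number of irreducible components of a cylinder — this is exactly the place where the characteristic-$0$ proof cites \cite{DEI}, and here one must instead lean on the stable-point description of generic points of cylinder components (the Lemma after Proposition~\ref{4.1}) so that the codimension is well defined and stabilizes, and then transport that stability through $f_\infty$ using Lemma~\ref{dl3.4} and Proposition~\ref{4.1}.
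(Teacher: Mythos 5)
Your overall strategy --- pass to a smooth model, compute $\codim(C_Y(q\cdot\val_E),Y_\infty)=q$ there, and add the fiber dimension $\tau=q\hke$ supplied by Lemma~\ref{dl3.4}, following \cite[Theorem 3.9]{DEI} --- is exactly the paper's. But your very first step contains a gap that defeats the purpose of the theorem in this paper. You propose to replace $g\colon\tilde X\to X$ by a \emph{smooth} birational model $Y$ dominating $\tilde X$, which moreover factors through the Nash blow-up. The normalization of the main part of the fiber product with the Nash blow-up is normal but in general not smooth, so producing a smooth $Y$ dominating $\tilde X$ on which $E$ still appears amounts to resolving the singularities of $\tilde X$ --- precisely the tool that is unavailable over a base field of positive characteristic and whose avoidance is the entire point of this paper. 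In characteristic $0$ your reduction is fine; in arbitrary characteristic that model need not exist.

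The fix, which is what the paper actually does, is to take $Y$ to be a suitable \emph{open subset} of $\tilde X$ on which both $\tilde X$ and $E$ are smooth; such an open set contains the generic point of $E$ because $\tilde X$ is normal, hence regular in codimension one. Then $f\colon Y\to X$ is a birational morphism from a smooth variety which is \emph{not proper}, and the reason the paper restates Lemma~\ref{dl3.4} without a properness hypothesis is exactly so that it applies to this $f$; since the definition of $C_X(q\cdot\val_E)$ only involves $(E\cap Y_{reg})_{q-1}$, nothing is lost by shrinking, and $\hke=\val_E(\widehat K_{Y/X})$ is still computed at the generic point of $E$. With that replacement, the rest of your argument (the codimension count on the smooth $Y_\infty$, the $\AA^\tau$-fibers of $f_m$, and the bookkeeping via Proposition~\ref{4.1} together with the stable-point description of generic points of components, rather than finiteness of the set of components) is the intended proof.
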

\begin{proof} Let $f:Y\to X$ be the restriction of $g$ on an open subset $Y\subset \tilde X$ such that $Y$ and $E\cap Y$ are smooth.
Then, by using Lemma \ref{dl3.4}, we can prove the required equality in the same way
as \cite[Theorem 3.9]{DEI}.
Note that  Theorem 3.9 in \cite{DEI} was stated under the condition that $f:Y\to X$ is
a resolution of the singularities of $X$, but 
the resolution is not actually needed for the proof
as the condition in Lemma \ref{dl3.4} does not require the properness of $f$.
\end{proof}

The following is a modified version  of Lemma \ref{dl3.4} for a blow-up $f$.

\begin{lem}
\label{blowup} Let $B\subset X$ be an irreducible reduced closed subset of
dimension $s$ with the defining ideal $I_B$ in $X$ and
let $\gamma\in X_\infty$ be an arc that is not contained in $\sing(X)_\infty\cup B_\infty$
and the center is a smooth closed point of $B$ with $e:=\ord_\gamma(I_B)>0$.
Let $f:Y\to X$ be the blow-up with the center $B$ and $\gamma'\in Y_\infty$ be the
lifting of $\gamma$.
Let $f_m: Y_{m,\infty}\to X_{m,\infty}$ be the morphism induced by $f$.
Then for  $m\gg 0$, letting $\gamma'_m=\psi_m^Y(\gamma')$, we have
$$\dim f_m^{-1}(f_m(\gamma'_m))\geq (d-s-1)e.$$
Moreover, for every $\gamma''_m\in f^{-1}_m(f_{m}(\gamma'_m))$ we have $\psi^Y_{m,m-e}(\gamma''_m)
=\psi^Y_{m,m-e}(\gamma'_m)$.

\end{lem}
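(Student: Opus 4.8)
The plan is to reduce to Lemma~\ref{dl3.4} by embedding $X$ into a smooth variety and blowing up there, and then to produce by hand a sufficiently large family of arcs inside the relevant fibre.

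First I would localize and embed. We may replace $X$ by an affine neighbourhood of the center $b$ of $\gamma$; since $B$ is smooth of dimension $s$ at $b$, we may choose a closed embedding of $X$ in a smooth affine variety $M$ of some dimension $N$ for which $B$ is a smooth closed subvariety of $M$ of codimension $c:=N-s$ --- concretely, near $b$ one takes coordinates $w_1,\dots,w_s,z_1,\dots,z_c$ on $M$ with $B=V(z_1,\dots,z_c)$, and then $I_X\subset(z_1,\dots,z_c)$ because $B\subset X$. Let $\bar f\colon\bar Y:=\mathrm{Bl}_B M\to M$; then $\bar Y$ is smooth, $\bar f$ is birational, $\widehat K_{\bar Y/M}=(c-1)E_0$ where $E_0$ is the exceptional prime divisor of $\bar f$, and the strict transform of $X$ in $\bar Y$ is exactly $Y=\mathrm{Bl}_B X$. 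The arc $\gamma$, viewed in $M_\infty$, lifts uniquely to an arc $\bar\gamma$ of $\bar Y$; it factors through $Y$ and coincides with $\gamma'$ under $Y_\infty\subset\bar Y_\infty$. Finally
$$
\tau:=\ord_{\bar\gamma}(\widehat K_{\bar Y/M})=(c-1)\,\ord_{\bar\gamma}(E_0)=(c-1)\,\ord_\gamma(I_B)=(c-1)e,
$$
using that the ideal of $E_0$ is $I_B\o_{\bar Y}$ and $\bar f\circ\bar\gamma=\gamma$; this is finite since $\gamma\notin B_\infty$.

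Now I would apply Lemma~\ref{dl3.4} to $\bar f$: for $m\gg0$ the fibre $F:=\bar f_m^{-1}(\bar f_m(\bar\gamma_m))$ is isomorphic to $\AA^{\tau}$ and every point of $F$ agrees with $\bar\gamma_m$ after the truncation $\psi^{\bar Y}_{m,m-\tau}$. In the chart of $\bar Y$ adapted to $\gamma$ --- the one where $\ord_\gamma(z_1)=e=\min_j\ord_\gamma(z_j)$, with coordinates $w_i$, $z_1$ and the slopes $v_j=z_j/z_1$ ($2\le j\le c$) --- the fibre $F$ is transparent: the $w_i$ and $z_1$ coordinates of a point of $F$ are forced to equal those of $\bar\gamma_m$, while each $v_j$ is determined modulo $t^{m+1-e}$ and free in degrees $m+1-e,\dots,m$. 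This re-proves $F\cong\AA^{(c-1)e}$ and shows that every point of $F$ agrees with $\bar\gamma_m=\gamma'_m$ modulo $t^{\,m-e+1}$. Since for $\gamma''_m\in Y_m$ one has $f_m(\gamma''_m)=f_m(\gamma'_m)$ if and only if $\bar f_m(\gamma''_m)=\bar f_m(\bar\gamma_m)$, it follows that $f_m^{-1}(f_m(\gamma'_m))=Y_m\cap F$ inside $\bar Y_m$, and the ``moreover'' assertion of the lemma is immediate by restricting the truncation statement above from $\bar Y$ to $Y$.

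It remains to bound $\dim(Y_m\cap F)$ from below by $(d-s-1)e$, and here I would argue directly. It suffices to exhibit a $(d-s-1)e$-dimensional family of arcs $\tilde\delta$ of $X$ with $\psi^X_m(\tilde\delta)=\gamma_m$ whose lifts $\tilde\delta'$ to $Y$ have pairwise distinct $m$-jets, since then $f_m(\tilde\delta'_m)=\psi^X_m(\tilde\delta)=\gamma_m$ puts all these $m$-jets in the fibre. Write $\tilde\delta=\gamma+t^{m+1}\epsilon$ with $\epsilon\in k[[t]]^N$. For $m\gg0$, the requirement that $\tilde\delta$ be an arc of $X$ linearizes to $J\cdot\epsilon\equiv0$ modulo a large power of $t$, where $J$ is the Jacobian matrix of a chosen set of generators of $I_X$, evaluated along $\gamma$; since $\gamma\notin(\sing X)_\infty$ the arc meets $X_{reg}$ at its generic point, so $J$ has rank $N-d$ over $k((t))$ and hence $\ker(J\bmod t^{e})$ has $k$-dimension at least $de$. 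On the other hand, a direct computation (valid for $m\gg0$) shows that $\tilde\delta'_m$ depends on $\epsilon$ only through the residues modulo $t^{e}$ of the $c-1$ combinations $\epsilon^{z_j}-v_j(\bar\gamma)\,\epsilon^{z_1}$, $2\le j\le c$, and that distinct residue-tuples give distinct $m$-jets; since the space of $\epsilon\bmod t^e$ annihilating all $c-1$ combinations has dimension $(s+1)e$, the lifts realize a family of dimension at least $de-(s+1)e=(d-s-1)e$. The delicate point --- and the reason only ``$\ge$'' is asserted --- is the passage from jets to arcs in this step: the perturbations $\epsilon$ are prescribed there only modulo $t^{e}$, equivalently at the level of $(m+e)$-jets of $X$, and one must check that for $m$ large enough a suitable $(d-s-1)e$-dimensional subfamily of them genuinely extends to arcs of $X$; this is the standard technical ingredient behind the hypothesis ``$m\gg0$'' in statements of this kind, analogous to what happens in the proof of Lemma~\ref{dl3.4}. (When $X$, hence $Y$, is smooth at $b$, the lemma follows at once from Lemma~\ref{dl3.4} applied to $f$ itself, since then $\widehat K_{Y/X}$ equals $(d-s-1)$ times the exceptional divisor of $f$ and the fibre is in fact $\cong\AA^{(d-s-1)e}$; in general the fibre may be strictly larger than the family just constructed, which is exactly why one obtains only an inequality.)
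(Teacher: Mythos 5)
Your overall strategy is the same as the paper's: pass to the blow-up of the ambient smooth space along $B$, compute explicitly in the chart where $\ord_\gamma(z_1)=e$ is minimal, observe that the slope coordinates of a point of the fibre are determined modulo $t^{m+1-e}$ and free in the top $e$ degrees, and deduce the ``moreover'' statement. That part of your argument is correct and matches the paper's computation (the paper uses the coordinates $y_i=x_i/x_1$ on $U(1)$ and reaches exactly your conclusion). Two small imprecisions: the fibre in the lemma lives in $Y_{m,\infty}$, not $Y_m$, so the identification should read $f_m^{-1}(f_m(\gamma'_m))=Y_{m,\infty}\cap F$; and Lemma \ref{dl3.4} only gives agreement after $\psi_{m,m-\tau}$ with $\tau=(c-1)e$, so you do need (and correctly supply) the finer chart computation to get agreement after $\psi_{m,m-e}$.

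The genuine gap is in the lower bound $\dim f_m^{-1}(f_m(\gamma'_m))\geq (d-s-1)e$, at exactly the point you flag yourself. Replacing the condition ``$\gamma+t^{m+1}\epsilon$ is an arc of $X$'' by the linearized condition $J\cdot\epsilon\equiv 0$ and counting $\dim\ker(J\bmod t^e)\geq de$ does not produce arcs of $X$: the equations of $X$ are not linear, and the claim that a $de$-dimensional family of the perturbations $\epsilon\bmod t^e$ actually extends to arcs is precisely the nontrivial input, not a routine check. Without it your family of lifts could a priori have dimension as small as $0$. The paper closes this step by invoking Proposition \ref{4.1} applied to $Y$: setting $r=\ord_{\gamma'}(\j_Y)$, the truncation $\rho_{m,m-e}\colon Y^{\leq r}_{m,\infty}\to Y^{\leq r}_{m-e,\infty}$ is a piecewise trivial $\AA^d$-fibration for $m\gg 0$, so $\rho_{m,m-e}^{-1}(\gamma'_{m-e})$ is a genuinely $de$-dimensional set of \emph{liftable} $m$-jets of $Y$ sitting inside $(\psi^{A'}_{m,m-e})^{-1}(\gamma'_{m-e})$; since your chart computation shows that $(f'_m)^{-1}(\gamma_m)$ is cut out there by the $(s+1)e$ equations fixing the top $e$ coefficients of $x_1$ and of $x_{N-s+1},\dots,x_N$, Krull's principal ideal theorem gives a nonempty component of dimension at least $de-(s+1)e=(d-s-1)e$. (Equivalently, you could apply Proposition \ref{4.1} to $X$ to justify your $de$-dimensional family of liftable $(m+e)$-jets over $\gamma_m$ and then run your residue-map count.) Either way, the fibration result of Proposition \ref{4.1} must be invoked explicitly; as written, your proof of the dimension bound is incomplete.
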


\begin{proof} Since the problem is local, we may assume that $X$ is an affine scheme embedded in $A=\AA_k^N$,
$B$ is defined by the ideal $(x_1,\ldots, x_{N-s})$ in $A=\spec k[x_1,\ldots, x_N]$ and
the center of $\gamma$ is the origin $0=(0,\ldots, 0)\in B\subset X\subset A$.
Let $\gamma_m=\psi^X_m(\gamma)$, then by definition $f_m(\gamma'_m)=\gamma_m$.
Then, we can express the ring homomorphism $\gamma_m^*: k[x_1,\ldots, x_N]\to k[t]/(t^{m+1})$
corresponding to $\gamma_m$ as follows:

$$\gamma^*_m(x_i)=\sum_{j= e}^ma_i^{(j)}t^j \ \ \ (i=1,\ldots, N-s)\ \ \ \ \ \ \ \ $$
$$\gamma^*_m(x_i)=\sum_{j= 1}^ma_i^{(j)}t^j \ \ \ (i=N-s+1,\ldots, N),$$
where $a_i^{(j)}\in k$.
Here, by this assumption, we may assume that $a_1^{(e)}\neq 0$, without loss of generality.

Let $ f': A' \to A$ be the blow-up with the center $B$.
Then $A'$ is covered by $(N-s)$ affine spaces $$U(i)=\spec k[x_i, \frac{x_1}{x_i}, {\stackrel{i}{\stackrel{\vee}{\cdots}}}, \frac{x_{N-s}}{x_i}, x_{N-s+1}, \ldots, x_N]\ \ \  (i=1,\ldots, N-s).$$
By this assumption, we may assume that $\gamma'_m\in U(1)_m$.
We denote the restriction $f'_m|_{U(1)_m}: U(1)_m\to A_m$ of $ f'_m:A'_m\to A_m$
by the same symbol $f'_m$.
Let $y_i=\frac{x_i}{x_1}$ for $i=2,\ldots, N-s$, then $x_1,y_2,\ldots, y_{N-s}, x_{N-s+1},\ldots, x_N$ form a coordinate system of $U(1)$.
We can express $\gamma'_m\in U(1)_m$ as follows:
$${\gamma'_m}^*(x_1)=\sum_{j= e}^ma_1^{(j)}t^j\ \ \ \ \ \ \ \ \ \ \ \ \ \ \ \ \ \ \ \ \ \ \ \ \ \ \ \ \ \ \ \ \ $$
$${\gamma'_m}^*(y_i)=\sum_{j=0}^mb_i^{(j)}t^j\ \ \ \ (i=2, \ldots, N-s)\ \ \ \ \ \ \ $$
$${\gamma'_m}^*(x_i)=\sum_{j= 1}^ma_i^{(j)}t^j \ \ \ (i=N-s+1,\ldots, N), $$
where $b_i^{(j)}$'s satisfy ${\gamma'_m}^*(y_i){\gamma'_m}^*(x_1)={\gamma_m}^*(x_i)=
\sum_{j= e}^ma_i^{(j)}t^j $ for
$i=2,\ldots, N-s$.

Then, we can see that a jet $\alpha \in (f'_m)^{-1}(\gamma_m)$ is expressed by
$$\alpha^*(x_1)=\sum_{j= e}^ma_1^{(j)}t^j\ \ \ \ \ \ \ \ \ \ \ \ \ \ \ \ \ \ \ \ \ \ \ \ \ \ \ \ \ \ \ \ \ \ \ \ \ \ \ \ \ \ \ \ \ \ \ \ \ $$
$$\alpha^*(y_i)=\sum_{j=0}^{m-e}b_i^{(j)}t^j+\sum_{j=m-e+1}^m y_i^{(j)}t^j\ \ \ \ (i=2, \ldots, N-s)$$
$$\alpha^*(x_i)=\sum_{j= 1}^ma_i^{(j)}t^j \ \ \ (i=N-s+1,\ldots, N),\ \ \ \ \ \ \ \ \ \ \ \ \ \ $$
where $y_i^{(j)}$ $(i=2,\ldots, N-s, j=m-e+1,\ldots, m)$ can be an arbitrary element in $k$.
Therefore, we have that every $\alpha \in (f'_m)^{-1}(\gamma_m)$ is mapped to the same element
$\psi^{A'}_{m, m-e}(\gamma'_m)$ by the truncation morphism $\psi^{A'}_{m, m-e}$, in particular, which yields the second statement of the lemma.

Let $\gamma'_{m-e}=\psi^{A'}_{m,m-e}(\gamma'_m)$, then any element $\beta\in (\psi^{A'}_{m,m-e})^{-1}(\gamma'_{m-e})$ is expressed as

$$\beta^*(x_1)=\sum_{j= e}^{m-e}a_1^{(j)}t^j+\sum_{j=m-e+1}^mx_1^{(j)}t^j\ \ \ \  \ \ \ \ \ \ \ \ \ \ \ \ \ \ \ \ \ \ \ \ \ \ $$
$$\beta^*(y_i)=\sum_{j=0}^{m-e}b_i^{(j)}t^j+\sum_{j=m-e+1}^m y_i^{(j)}t^j\ \ \ \ (i=2, \ldots, N-s)$$
$$\beta^*(x_i)=\sum_{j= 1}^{m-e}a_i^{(j)}t^j+\sum_{j=m-e+1}^mx_i^{(j)}t^j \ \ \ (i=N-s+1,\ldots, N), $$
where $y_i^{(j)}$ and $x_i^{(j)}$  are arbitrary
elements of $k$.
Here, comparing the expressions of $\alpha^*$ and $\beta^*$ above, we obtain that
$$(f'_m)^{-1}(\gamma_m)=(f'_m)^{-1}(\gamma_m)\cap (\psi^{A'}_{m,m-e})^{-1}(\gamma'_{m-e})
\subset (\psi^{A'}_{m,m-e})^{-1}(\gamma'_{m-e})$$
is defined by $(s+1)e$ equations:
$$x_1^{(j)}=a_1^{(j)}, (j=m-e+1,\ldots, m)\ \ \ \ \ \ \ \ \ \ \ \ \ \ \ $$
$$x_i^{(j)}=a_i^{(j)}, (i=N-s+1,\ldots, N, \ \ j=m-e+1,\ldots, m)$$
in $ (\psi^{A'}_{m,m-e})^{-1}(\gamma'_{m-e})$.

Now, remember that  $Y$ is the strict transform of $X$ in $A'$, $\j_Y$ be the Jacobian ideal of $Y$
and let $r:=\ord_{\gamma'}(\j_Y)$.
Take $m$  such that it is sufficiently large with respect to $r$ and $e$.
Let $\rho_{m,m-1}:Y_{m, \infty}^{\leq r}\to Y_{m-1, \infty}^{\leq r}$ be the restriction of the
truncation morphism $\psi^Y_{m,m-1}: Y_m\to Y_{m-1}$.
Then, $\rho_{m, m-1}$ is a piecewise trivial morphism with the fiber $\AA^d$.
Therefore $\rho_{m,m-e}:Y_{m, \infty}^{\leq r}\to Y_{m-e, \infty}^{\leq r}$  has  exactly $de$-dimensional fibers.
Here we note that  $\rho_{m,m-e}^{-1}(\gamma'_{m-e})
\subset (\psi^{A'}_{m,m-e})^{-1}(\gamma'_{m-e})$.
By this and the above observation,
we obtain that
 $$(f'_m)^{-1}(\gamma_m)\cap \rho_{m,m-e}^{-1}(\gamma'_{m-e})\subset  \rho_{m,m-e}^{-1}(\gamma'_{m-e})$$
is defined by at most $(s+1)e$ equations in $de$-dimensional variety $\rho_{m,m-e}^{-1}(\gamma'_{m-e})$.
This implies that $f_m: Y_m\to X_m$ has a fiber $f_m^{-1}(\gamma_m)$ of dimension
$\geq (d-s-1)e$.

\end{proof}

\begin{rem} In the previous lemma,  the center of the lifting $\gamma'\in Y_\infty$ may be a singular point of $X$, while the center of $\gamma\in Y_\infty$ is non-singular in Lemma \ref{dl3.4}.
This, the situation of Lemma \ref{blowup} is different from that of Lemma \ref{dl3.4}.

As a corollary, we obtain the following.
But the result has been proved in \cite[Proposition 3.7 (vii)]{Re2} using the
concept of stable points in the space of arcs \cite[Definition 3.6]{Re2} (see Lemma 3.6).

  This corollary was also proved in \cite[Proposition 2.12]{DEI} for the characteristic 0 case by using resolutions of singularities.

\end{rem}

\begin{cor}
\label{maincor}
Every irreducible component $C$ of a cylinder such that $C\not\subset \sing(X)_\infty$
gives a divisorial valuation.
In particular, the closure of an irreducible component $C$ of finite intersection of contact loci $\cont^{m_i}(\a_i)$
$(i=1,\ldots, s)$ is a maximal divisorial set.
\end{cor}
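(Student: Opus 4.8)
The statement has two parts, and I would first reduce the second (the ``In particular'') to the first. A finite intersection $\bigcap_{i=1}^s\cont^{m_i}(\a_i)$ is again a cylinder, since each $\cont^{m_i}(\a_i)=\cont^{\ge m_i}(\a_i)\setminus\cont^{\ge m_i+1}(\a_i)$ is locally closed in a jet scheme and finite intersections of cylinders are cylinders; so any irreducible component $C$ of it is an irreducible component of a cylinder. Granting the first assertion, $C$ is then a divisorial set, say with generic point giving $q\cdot\val_E$, hence $C\subseteq C_X(q\cdot\val_E)$ because $C_X(q\cdot\val_E)$ is the largest divisorial set attached to that valuation. Equality will follow by matching codimensions: by Theorem~\ref{cod}, $\codim(C_X(q\cdot\val_E),X_\infty)=q\,(\val_E(\widehat K_{Y/X})+1)$, and once this equals $\codim(C,X_\infty)$, the definition of codimension via the strata $X^{\le e}_{m,\infty}$ for $m\gg e$ forces the smaller irreducible set $C^{\le e}_m$ to be dense in the larger $(C_X(q\cdot\val_E))^{\le e}_m$, whence $C=C_X(q\cdot\val_E)$. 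So the heart of the matter is to prove that $\ord_\gamma$ is divisorial, where $\gamma$ is the generic point of $C$.

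For this core statement the plan is to imitate the characteristic-$0$ argument of \cite[Proposition~2.12]{DEI}, replacing the passage to a resolution by a finite sequence of blow-ups controlled by Lemma~\ref{blowup}. First I would reduce to $X$ affine and recall, from the lemma above asserting that the generic point of an irreducible component of a cylinder not contained in $\sing(X)_\infty$ is a stable point, that $\gamma$ is stable and that $e:=\ord_\gamma(\jx)<\infty$ (the latter because $C\not\subset\sing(X)_\infty$). Let $w$ be the center of $\gamma$, $B=\overline{\{w\}}$; after a base extension we may take $w$ to be a closed point, and $w$ is then a smooth point of $B$ even though $X$ may be singular at $w$. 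If $B$ is already a prime divisor on $X$ and $\o_{X,w}$ is a discrete valuation ring, then every element of $K(X)$ is a unit times a power of a uniformizer $\pi$ and one checks directly that $\ord_\gamma=\ord_\gamma(\pi)\cdot\val_B$, which is divisorial; this is the base case. Otherwise I would blow up $B$, obtaining a birational $f\colon Y\to X$ with exceptional divisor, lift $\gamma$ to $\gamma'\in Y_\infty$ (note $\ord_{\gamma'}=\ord_\gamma$, so being divisorial over $Y$ is the same as over $X$), and apply Lemma~\ref{blowup}, which is legitimate precisely because $\gamma\not\subset\sing(X)_\infty\cup B_\infty$ and because that lemma, unlike Lemma~\ref{dl3.4}, permits the ambient space to be singular at the smooth center $B$ being blown up. Combining Lemma~\ref{blowup} with the proposition above that a stable point is the generic point of some cylinder, one sees that $\overline{\{\gamma'\}}$ is again an irreducible component of a cylinder in $Y_\infty$, not contained in $\sing(Y)_\infty$, with center lying over $w$; one then recurses on $(Y,\gamma')$, producing a tower $X=X_0\leftarrow X_1\leftarrow\cdots$ of blow-ups along the successive centers of the lifted arcs.

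The main obstacle is the termination of this tower, i.e.\ that after finitely many steps the center of the lifted arc is the generic point of a prime divisor on the model, at which point the base case finishes the proof; this is exactly the place where the characteristic-$0$ proof invokes resolution of singularities. I would handle it in two stages. Stage (a): after finitely many blow-ups the model becomes smooth at the center of the lifted arc, the finiteness being forced by $\ord_\gamma(\jx)=e<\infty$ together with the codimension-stabilization supplied by the stable-point lemma and by Proposition~\ref{4.1}, which bound the relevant Jacobian/discrepancy data along the arc; this stage is where the extra flexibility of Lemma~\ref{blowup} is genuinely used. Stage (b): once the ambient model is smooth at the center, we are in the hypothesis of \cite[Proposition~2.12]{DEI}, whose proof on a smooth ambient uses no resolution, relies only on Lemma~\ref{dl3.4} (the remaining blow-ups being along smooth centers), and terminates by a discrepancy count against the fixed finite number $\codim(C,X_\infty)$. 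Alternatively, the whole core assertion can simply be quoted from \cite[Proposition~3.7(vii)]{Re2}, where it is derived from the theory of stable points. I expect stage (a) --- reducing to a model smooth along the arc --- to be the delicate new point, since it is the precise step at which the characteristic-$0$ argument uses resolution; with it in hand, tracking the divisor $E$ and the integer $q$ through the tower and invoking Theorem~\ref{cod} gives the final assertion as in the first paragraph.
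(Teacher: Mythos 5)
Your overall skeleton is the right one and matches the paper's: blow up the center of the arc repeatedly, using Lemma~\ref{blowup} (rather than Lemma~\ref{dl3.4}) because the ambient model is allowed to be singular, and stop when the center becomes a divisor. But your termination argument is where the proof actually lives, and what you propose there does not work as stated. Your stage~(a) asks that after finitely many blow-ups the model become \emph{smooth} at the center of the lifted arc; that is essentially local uniformization along the arc, which is exactly the kind of statement one cannot invoke in positive characteristic, and the justification you offer (finiteness of $\ord_\gamma(\jx)$ plus ``codimension-stabilization'') does not yield it. The point you are missing is that smoothness of the model is never needed. The quantitative content of Lemma~\ref{blowup} is that when the center $B$ has dimension $s<d-1$, the induced map $C^{(1)}_m\to C_m$ has fibers of dimension at least $(d-s-1)e\geq 1$, so $\codim(C^{(1)},Y^{(1)}_\infty)<\codim(C,X_\infty)$ \emph{strictly}. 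Since codimension is a nonnegative integer, the tower of blow-ups must terminate, and it terminates precisely when the center of the lifted generic point has dimension $d-1$. At that stage one normalizes the model: the local ring at the generic point of the resulting prime divisor $E$ is then a DVR with completion $k(E)[[\tau]]$, and writing the image of $\tau$ as $t^q$ gives $\ord_{\alpha^{(n)}}=q\cdot\val_E$ directly. So your ``base case'' applies after normalization with no smoothness hypothesis, and your delicate stage~(a) simply disappears. (Quoting \cite[Proposition 3.7(vii)]{Re2} is a legitimate alternative, as the paper itself remarks, but it is not the argument you constructed.)

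A second, smaller gap: for the ``in particular'' statement you assert that $\overline{C}=C_X(q\cdot\val_E)$ will follow ``by matching codimensions,'' but you never establish that $\codim(C,X_\infty)=\codim(C_X(q\cdot\val_E),X_\infty)$, so the argument is circular as written. The clean route is the one the paper takes: since $\ord_\alpha=q\cdot\val_E$ for the generic point $\alpha$ of $C$, each $m_i$ equals $q\cdot\val_E(\a_i)$, so the generic point of the maximal divisorial set $C_X(q\cdot\val_E)$ also lies in every $\cont^{m_i}(\a_i)$; maximality of $C$ as an irreducible component of the intersection, together with $C\subseteq C_X(q\cdot\val_E)$, then forces $\overline{C}=C_X(q\cdot\val_E)$ without any codimension computation.
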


\begin{proof}
  Let $\alpha$ be the generic point of $C$.
  First, we prove that there exists a prime divisor $E$ over $X$ such that the lifting $\tilde\alpha$
  of $\alpha$ onto $Y$ on which $E$ appears  has the center $E$.
  Let $B\subset X$ be the center of $\alpha$.
  If $\dim B=d-1$, then we have the required conclusion.
  Therefore, we assume that $\dim B=s<d-1$.
  By replacing $X$ by a small affine neighborhood, we may assume that $X$ is embedded in
  the affine space $A=\AA_k^N$.
  Let $f^{(1)}: A^{(1)}\to A$ be the blow-up with the center $B$.
  Then, $f^{(1)}$ is isomorphic outside $B$  and the image of $\alpha$ is not contained
  in $B$, therefore $\alpha$ is lifted onto the proper transform $Y^{(1)}$ of $X$ on $A^{(1)}$
  by the properness criteria applied to $f^{(1)}$.
  Then, the lifting $\alpha^{(1)}$ of $\alpha$ on $Y^{(1)}$ gives an irreducible component
  $C^{(1)}$ of a cylinder on $Y^{(1)}$.
  Indeed, if $C$ is an irreducible component of the cylinder $\Gamma=(\psi^X_m)^{-1}(S)$
  for a constructible subset $S\subset X_m$, then $C^{(1)}$ is an irreducible component of
  the cylinder $(\psi^{Y^{(1)}}_m)^{-1}\left((f^{(1)}_m)^{-1}(S)\right)$.

  Now, take $m$ to be sufficiently large and a general arc $\gamma\in C$ whose center 
  is a closed point in $B$. 
 Let  $e=\ord_\gamma I_B$.
 Then,  we obtain that the morphism $f^{(1)}: C^{(1)}_{m}\to C_m$ has relative dimension
  $\geq (d-s-1)e \geq 1$.
  Therefore, $\dim C^{(1)}_m> \dim C_m$, which yields $\codim (C^{(1)}, Y^{(1)}_\infty)<\codim (C, X_\infty)$.
  If the center of the generic point of $C^{(1)}$ is not of dimension $d-1$, we blow-up at the center and obtain the irreducible component whose codimension is less than the previous one.
  In this way, we obtain the successive blow-ups:
  $$A^{(n)}\to \cdots \to A^{(2)}\to A^{(1)}\to A$$
  with the sequence of irreducible components
  $$C^{(n)}, \cdots , C^{(2)},  C^{(1)}, C$$
  of  cylinders in $Y^{(n)}, \cdots , Y^{(2)},  Y^{(1)}, X$
  such that $$\codim C^{(n)}< \cdots <\codim C^{(2)}<\codim C^{(1)}<\codim C.$$
  Because the codimension is finite, this procedure should terminate, {\it i.e.,} there exists a number $n$ such that the dimension
  of the center of the generic point of $C^{(n)}$ has dimension $d-1$.
  This is the conclusion to our first claim.

  Then, we prove that the lifting $\alpha^{(n)}$ of $\alpha$ gives a divisorial valuation $q\cdot\val_E$.
  Replacing $Y^{(n)}$ by its normalization if necessary,
  we may assume that $Y^{(n)}$ is normal and the center of the lifting $\alpha^{(n)}$ of $\alpha$ on $Y^{(n)}$ is a prime divisor, say $E$.
  The arc $\alpha^{(n)}$ gives a ring homomorphism of local rings
  $\alpha^{(n)*}:\o_{Y^{(n)}, \alpha^{(n)}(0)}\to K[[t]]$, where $\alpha^{(n)}(0)$ is the center of $\alpha^{(n)}$ on $Y^{(n)}$ and $K$ is the residue field of $\alpha^{(n)}
  \in Y^{(n)}_\infty$.
  Here, we note that the center $\alpha^{(n)}(0)$  is the generic point of $E$.
  Then, we can see that $\alpha^{(n)*}$ factors through
  $$\beta:\widehat{\o}_{Y^{(n)}, \alpha^{(n)}(0)}=k(E)[[\tau]] \to K[[t]],$$
  where $k(E)$ is the rational function field of $E$ and $\tau$ is the generator of the maximal
  ideal of the local ring $\o_{Y^{(n)}, \alpha^{(n)}(0)}$.
  If we denote $\beta(\tau)=t^q$, then it implies that $\ord_{\alpha^{(n)}}=q\cdot\val_E$.

  Finally we assume that $C$ is an irreducible component of the intersection of $\cont ^{m_i}(\a_i)$'s.
  The generic point $\alpha$ of $C$ gives a divisorial valuation $q\cdot\val_E$ for a prime divisor over $X$
  by the above discussions.
  By definition of $\cont ^{m_i}(\a_i)$, it contains also the generic point of the maximal divisorial
  set $C(q\cdot\val_E)$ for every $i$.
  This complete the proof of the last assertion of the corollary.
\end{proof}

For the interpretation of MJ-minimal log discrepancies with center at a (not necessarily closed) point in terms of jet schemes,
we need the following definition:

\begin{defn}
\label{defeta}
Let $X$ be a variety and $\eta\in X$ a (not necessarily closed) point.
For a cylinder $S\subset X_\infty$ we define the codimension of $S\cap \pi_{}^{-1}(\eta)$ as follows:
$$\codim (S\cap \pi_{}^{-1}(\eta), X_\infty)
$$
$$
:=\inf\left\{\codim C\mid \begin{array}{l}C \mbox{\ is \ an\  irreducible \ component \ of\
}\ {S\cap\pi^{-1}(\overline{\{\eta\}})}\\
\mbox{dominating}\ \overline{\{\eta\}} \ \mbox{and\ not\ contained\ in}\
(\sing X)_\infty \ \end{array}\right\}.
$$
\end{defn}

By Corollary \ref{maincor}, we obtain the following interpretation of the MJ minimal
log discrepancy by the arc space.
This is proved \cite{is} for the characteristic zero case.

\begin{thm}
\label{formula}
Let  $(X, \a)$ be a pair consisting of an arbitrary variety $X$  and a non-zero coherent ideal sheaf
 $\a\subset \o_X$.
Let $n\geq 0$ be a real number, let $W$ be a proper closed subset of $X$ and let $I_W$ be the (reduced) ideal of $W$.
Then,
\vskip.5truecm
$\mldmj(W;X,\a^n)$
\begin{equation}
\label{hmld=}
=\inf_{m_i\in\NN}\{\codim(\cont^{m_1}(\a)\cap\cont^{m_2}(\j_X)\cap\cont^{\geq 1}(I_W))-m_1n-m_2\}
\end{equation}

$\mldmj(W;X,\a^n)$
\begin{equation}
\label{hmldgeq}
=\inf_{m_i\in\NN}\{\codim(\cont^{\geq m_1}(\a)\cap\cont^{\geq m_2}(\j_X)\cap\cont^{\geq 1}(I_W))-m_1n-m_2\}.
\end{equation}

For a (not necessarily closed) point $\eta\in X$, we also have

$\mldmj(\eta;X,\a^n)$
\begin{equation}
\label{hmldpt=}
=\inf_{m_i\in\NN}\{\codim(\cont^{m_1}(\a)\cap\cont^{m_2}(\j_X)\cap\pi^{-1}(\eta))-m_1n-m_2\}.
\end{equation}

$\mldmj(\eta;X,\a^n)$
\begin{equation}
\label{hmldptgeq}
=\inf_{m_i\in\NN}\{\codim(\cont^{\geq m_1}(\a)\cap\cont^{\geq m_2}(\j_X)\cap\pi^{-1}(\eta))-m_1n-m_2\}.
\end{equation}

\end{thm}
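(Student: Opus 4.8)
The plan is to deduce Theorem \ref{formula} from Corollary \ref{maincor} and Theorem \ref{cod} by the same bookkeeping that works in characteristic $0$ (as in \cite{EM} and \cite{is}), being careful that every ingredient we invoke has already been made characteristic-free in the preceding sections. I would first reduce the four formulas to the single divisorial computation: by Theorem \ref{cod}, for a prime divisor $E$ over $X$ appearing on a partial resolution $\tilde X \to X$ with $E\cap Y$ smooth on an open $Y$, one has $\codim(C_X(q\cdot \val_E), X_\infty) = q(\val_E(\widehat K_{Y/X})+1) = q(\hke + 1)$, since $\val_E(\widehat K_{Y/X}) = \hke$ by the definition of the Mather discrepancy. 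Moreover, along $C_X(q\cdot\val_E)$ the generic arc $\alpha$ satisfies $\ord_\alpha(\a) = q\cdot\val_E(\a)$ for any ideal $\a$ whose order is constant on the maximal divisorial set, in particular $\ord_\alpha(\j_X) = q\cdot j_E$ and $\ord_\alpha(\a) = q\cdot\val_E(\a)$. Hence for the maximal divisorial set attached to $q\cdot\val_E$ with $m_1 = q\cdot\val_E(\a)$ and $m_2 = q\cdot j_E$ we get
\[
\codim(C_X(q\cdot\val_E), X_\infty) - m_1 n - m_2 = q(\hke + 1) - q\cdot\val_E(\a)\,n - q\cdot j_E = q\cdot \amj(E; X, \a^n).
\]

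Next I would prove the two inequalities of each displayed equation. For ``$\geq$'': given any $m_1, m_2$, take an irreducible component $C$ of $\cont^{m_1}(\a)\cap\cont^{m_2}(\j_X)\cap\cont^{\geq 1}(I_W)$ not contained in $(\sing X)_\infty$ (components inside $(\sing X)_\infty$ are excluded in the definition of codimension of a cylinder, and if every component lies there the codimension is $+\infty$, so the inequality is vacuous); since $\cont^{m_2}(\j_X)$ forces $C\not\subset(\sing X)_\infty$ whenever $m_2<\infty$, Corollary \ref{maincor} tells us $\overline C$ is a maximal divisorial set $C_X(q\cdot\val_E)$ for some prime divisor $E$ over $X$ with center in $W$ (the center meets $W$ because $C\subset\cont^{\geq 1}(I_W)$). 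Then the constancy of orders along $C$ gives $m_1 = \ord_\alpha(\a)\geq q\cdot\val_E(\a)$ and $m_2 = \ord_\alpha(\j_X) = q\cdot j_E$, and $\codim(C, X_\infty) = q(\hke+1)$, so $\codim(C,X_\infty) - m_1 n - m_2 \leq q(\hke+1) - q\cdot\val_E(\a)\,n - q\cdot j_E = q\cdot\amj(E;X,\a^n)$; dividing through, or rather noting $q\geq 1$ and $\amj\geq\mldmj$... actually one needs $q\cdot\amj(E;\cdot)\geq\amj(E;\cdot)\geq\mldmj(W;\cdot)$ only when $\amj(E;\cdot)\geq 0$, which is why the characteristic-$0$ argument splits off the case $\mldmj = -\infty$; by Remark \ref{mld}(iii) that case is detected and handled separately, so assume $\mldmj(W;X,\a^n)\geq 0$, whence $\amj(E;X,\a^n)\geq 0$ is automatic for the relevant $E$ and the inequality $\codim - m_1 n - m_2 \geq \mldmj(W;X,\a^n)$ follows. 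For ``$\leq$'': given any exceptional prime divisor $E$ over $X$ with center in $W$, the maximal divisorial set $C_X(\val_E)$ (take $q=1$) is an irreducible component of $\cont^{m_1}(\a)\cap\cont^{m_2}(\j_X)\cap\cont^{\geq 1}(I_W)$ for the specific values $m_1 = \val_E(\a)$, $m_2 = j_E$ — this is precisely the last assertion of Corollary \ref{maincor}, that such an intersection has $C_X(q\cdot\val_E)$ among the closures of its components — so the right-hand side of \eqref{hmld=} is $\leq \codim(C_X(\val_E),X_\infty) - \val_E(\a)n - j_E = \amj(E;X,\a^n)$; taking infimum over $E$ gives the claim.

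The passage between \eqref{hmld=} and \eqref{hmldgeq} is routine: $\cont^{\geq m}(\a) = \bigsqcup_{m'\geq m}\cont^{m'}(\a)$ is a finite (for the purpose of extracting a component of minimal codimension) decomposition into cylinders, so an irreducible component of the ``$\geq$'' version is an irreducible component of the ``$=$'' version for some $m_i' \geq m_i$, and replacing $m_i$ by $m_i'$ only decreases $-m_i n - m_2$; conversely a ``$=$'' stratum is contained in the corresponding ``$\geq$'' stratum. The point-version formulas \eqref{hmldpt=} and \eqref{hmldptgeq} are obtained verbatim with $\cont^{\geq 1}(I_W)$ replaced by $\pi^{-1}(\eta)$ throughout: Definition \ref{defeta} is rigged so that the codimension of $S\cap\pi^{-1}(\eta)$ is the infimum of codimensions of components dominating $\overline{\{\eta\}}$ and not inside $(\sing X)_\infty$, and the condition that a divisorial set $C_X(q\cdot\val_E)$ dominates $\overline{\{\eta\}}$ is exactly the condition that the center of $E$ on $X$ equals (the closure of) $\eta$, matching the definition of $\mldmj(\eta;\cdot)$; here one should double-check that ``center $= \overline{\{\eta\}}$'' versus ``center $\ni \eta$'' is handled consistently — this is the distinction flagged in Remark \ref{mld}(i) — and the argument of Corollary \ref{maincor} produces a divisor whose center is the closure of the center of the generic arc, which is $\overline{\{\eta\}}$ when $C$ dominates it. I expect the main obstacle to be bookkeeping rather than any new geometric input: specifically, keeping straight (a) the $-\infty$ convention, so that one genuinely reduces to the case $\mldmj\geq 0$ before dividing inequalities by $q$, and (b) the ``center in $W$'' versus ``center $= \overline{\{\eta\}}$'' distinctions in the four variants, making sure that the component of the contact locus one extracts from Corollary \ref{maincor} has its divisorial center in the prescribed position. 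All of this runs in arbitrary characteristic because the only substantive inputs — Proposition \ref{4.1}, Lemma \ref{dl3.4}, Lemma \ref{blowup}, Theorem \ref{cod}, and Corollary \ref{maincor} — have already been established without resolution of singularities in the preceding sections.
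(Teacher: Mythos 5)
Your proposal follows essentially the same route as the paper's proof: both directions come from Theorem \ref{cod} together with Corollary \ref{maincor}, with the same reduction to the case $\amj(E;X,\a^n)\geq 0$ before dividing by $q$, and the same passage from the exact contact loci to the $\geq$ versions via the orders of the generic point of a minimizing component. The one step you gloss over is the convention case $\dim X=1$ with $\mldmj=-\infty$, where the paper must additionally show the right-hand side is $-\infty$ by letting $q\to\infty$ along the sets $C_X(q\cdot\val_E)$; apart from that (and the slight overstatement that $C_X(\val_E)$ \emph{is} a component of the contact locus, when only a nonempty open subset of it lies there, which is all one needs), the bookkeeping you describe is exactly the paper's.
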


\begin{proof}
For $\cha\ k=0$, we proved these in
\cite[Proposition 3.8 and Remark 3.8]{is},  using \cite[Theorem 3.9]{DEI}.
The proof for positive characteristic case follows just in the same way  by using Theorem \ref{cod}  and Corollary \ref{maincor}.
However, for the reader's convenience, we  write down the proof here.
 For the proof of the equality (\ref{hmld=}), we will show first the following inequality:
 
 $\mldmj(\eta;X,\a^n)$
 
$\geq \inf_{m_i\in\NN}\{\codim(\cont^{m_1}(\a)\cap\cont^{m_2}(\j_X)\cap\pi^{-1}(\eta))-m_1n-m_2\}.$
 
Let $E$ be any prime divisor over
  $X$ with the center in $W$.
  Let $m_1=\val_E(\a)$, $m_2=\val_E(\j_X)$ and $v=\val_E$.
  Then, there is a non-empty open subset $C$ of the maximal divisorial set  $C_X(v)$ such that $C\subset \cont^{m_1}(\a)\cap\cont^{m_2}(\j_X)\cap\cont^{\geq 1}(I_W)$.
  Hence,
    $$\widehat k_E-\val_E(\j_X)-n\cdot\val_E(\a)+1=\codim(C_X(v))-m_1n-m_2$$
   $$\geq
    \codim(\cont^{m_1}(\a)\cap\cont^{m_2}(\j_X)\cap\cont^{\geq 1}(I_W))-m_1n-m_2,$$
    which yields the required inequality unless $\dim X=1$ and $\mldmj(X,\a^n)=-\infty$.

    When $\dim X=1$ and $\mldmj(X,\a^n)=-\infty$, there is a prime divisor $E$ over $X$ with the center in $W$
    such that $\widehat k_E-\val_E(\j_X)-n\cdot\val_E(\a)+1<0$.
    Let $m_1=\val_E(\a)$ and $m_2=\val_E(\j_X)$, then
    $\codim C_X(\val_E)-m_1n-m_2<0.$
    Here, for every $q\in \NN$, by Proposition \ref{cod},
    $$\codim C_X(q\cdot\val_E)-qm_1n-qm_2=q(\codim C_X(\val_E)-m_1n-m_2)<0.$$
    As a non-empty open subset of $C_X(q\cdot\val_E)$ is contained in \\
$\cont^{ qm_1}(\a)\cap\cont^{qm_2}(\j_X)\cap\cont^{\geq 1}(I_W)$,
    we have
    $$\codim(\cont^{ qm_1}(\a)\cap\cont^{qm_2}(\j_X)\cap\cont^{\geq 1}(I_W))-qm_1n-qm_2$$
    $$\leq \codim (C_X(q\cdot\val_E))-qm_1n-qm_2$$
    $$=q(\codim (C_X(\val_E))-m_1n-m_2)<0.$$
    Here, if $q\to \infty$, then, we have $$\codim(\cont^{ qm_1}(\a)\cap\cont^{qm_2}(\j_X)\cap\cont^{\geq 1}(I_W))-qm_1n-qm_2
    \to -\infty,$$ which implies that 
   the right-hand side of (\ref{hmld=}) in the theorem is $-\infty$.

    For the proof of the converse inequality,
    
     $\mldmj(\eta;X,\a^n)$
 
$\leq \inf_{m_i\in\NN}\{\codim(\cont^{m_1}(\a)\cap\cont^{m_2}(\j_X)\cap\pi^{-1}(\eta))-m_1n-m_2\},$
    we may assume that $\widehat k_E-\val_E(\j_X)-n\cdot\val_E(\a)+1\geq 0$
    for every prime divisor $E$ over $X$ with the center in $W$.
    Indeed if there is a prime divisor $E$ with the  center in $W$ and $\widehat k_E-\val_E(\j_X)-n\cdot\val_E(\a)+1<0$,
    then $\mldmj(W;X,\a^n)=-\infty$ by Remark \ref{mld}, (iii), and therefore, the required
    inequality is trivial.

    For $m_i\in \NN$, let $C\subset
    \cont^{m_1}(\a)\cap\cont^{m_2}(\j_X)\cap\cont^{\geq 1}(I_W)$ be an irreducible component
    that gives the codimension of the cylinder.
    Then, the closure $\overline C$ is $C_X(v)$ for some  divisorial valuation $v$
   by Corollary \ref{maincor}.
    Let $v=q\cdot\val_E$. 
    Then $m_1=v(\a)$, $m_2=v(\j_X)$ and $E$ is a prime divisor over $X$ with the center in $W$ and
    $$\codim(\cont^{m_1}(\a)\cap\cont^{m_2}(\j_X)\cap\cont^{\geq 1}(I_W))-m_1n-m_2$$
    $$=\codim(C_X(v))-m_1n-m_2$$
    $$=	q(\widehat k_E+1)-qn\cdot\val_E(\a)-q\cdot\val_E(\j_X)\geq \widehat k_E+1-n\cdot\val_E(\a),$$
    which yields the required inequality.

    For the proof of (\ref{hmldgeq}) of the theorem, let
    $$a_m=\codim(\cont^{m_1}(\a)\cap\cont^{m_2}(\j_X)\cap\cont^{\geq 1}(I_W))-m_1n-m_2,$$
$$b_m=\codim(\cont^{\geq m_1}(\a)\cap\cont^{\geq m_2}(\j_X)\cap\cont^{\geq 1}(I_W))-m_1n-m_2.$$
  As $\cont^m(\a)\subset \cont^{\geq m}(\a)$, we have $a_m\geq b_m$.
  Therefore, it follows\\ $\inf_m \{a_m\}\geq \inf _m\{b_m\}$.

  Next, we prove the converse inequality.
  For every $m\in \NN$, let $C_X(v)$ be the irreducible component of
  $\codim(\cont^{\geq m_1}(\a)\cap\cont^{\geq m_2}(\j_X)\cap\cont^{\geq 1}(I_W))$ that gives the codimension.
  Then, for $m'_1:=v(\a)\geq m_1$ and $m'_2:=v(\j_X)\geq m_2$ we have
  $$\codim(\cont^{\geq m_1}(\a)\cap\cont^{\geq m_2}(\j_X)\cap\cont^{\geq 1}(I_W))$$
  $$=
  \codim(\cont^{ m'_1}(\a)\cap\cont^{ m'_2}(\j_X)\cap\cont^{\geq 1}(I_W)).$$
  Hence, $b_m\geq a_{m'}$, which yields $\inf_m \{b_m\}\geq \inf _m\{a_m\}$.

The equalities  (\ref{hmldpt=}) and ({hmldptgeq}) follow in the same way, indeed one has only to
be careful to replace ``center in $W$" by ``center $\overline{\{\eta\}}$."

\end{proof}

\begin{rem}\label{extended}
 Our formula can be easily extended for the combination of ideals
 $\a_1,\a_2,\cdots,\a_r$ instead of one ideal $\a$. I.e., we have
 $$\mldmj(W;X,\a_1^{e_1}\a_2^{e_2}\cdots\a_r^{e_r})=$$
 $$\inf_{m_i\in\NN}\left\{\codim\left(\left(\bigcap_{i=1}^r\cont^{ m_i}(\a_i)\right)\cap\cont^{ m_{r+1}}(\j_X)\cap\cont^{\geq 1}(I_W)\right)
  -\sum_{i=1}^rm_ie_i-m_{r+1}\right\},$$
 where $e_i$'s are positive real numbers.
 Here, any of $\cont^{m_i}(\a_i)$'s can be replaced by $\cont^{\geq m_i}(\a_i)$.
 For  simplicity of the notation and the proofs, we keep formulating the forthcoming formulas
 for one ideal only.
 But note that the formulas in this section are also valid under this combination form.

 The description of log-canonical threshold  on a non-singular variety by jet schemes in positive characteristic case is obtained by Zhu \cite{zhu}.
\end{rem}

Here, we will show the inversion of adjunction Formula for MJ-minimal log discrepancies for
the base field of arbitrary characteristic.
The proof basically follows  the idea in the case of characteristic 0.
For the proof, we prepare some lemmas.
The first one was proved in \cite{EM} for an arbitrary characteristic.

\begin{lem}[{\cite[Lemma 8.4]{EM}}]
\label{ci}
 Let $A$ be a non-singular variety and $M=H_1\cap\cdots
\cap H_c$ a codimension $c$ complete intersection in $A$.
If $C$ is an irreducible locally closed cylinder in $A_\infty$ such that
$$C\subset \bigcap_{i=1}^c\cont ^{\geq d_i}(H_i),$$
and if there is an arc $\gamma\in C\cap M_\infty$ with $\ord_\gamma(\j_M)=e$,
then
$$\codim(C\cap M_\infty, M_\infty)\leq \codim(C, A_\infty)+e-\sum_{i=1}^c d_i.$$
\end{lem}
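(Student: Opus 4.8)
The statement is purely about cylinders in the arc space of a \emph{nonsingular} variety $A$, so the only tools needed are the explicit jet-space structure over smooth varieties (the fiber-$\AA^d$ truncation fibrations of Proposition \ref{4.1}, applied to the nonsingular $M$) and the local coordinate description of how the ideal of a complete intersection behaves along an arc. I would proceed by comparing, for $m \gg 0$, the dimensions of the image $\psi_m(C) \subset A_m$ and of $\psi_m(C) \cap M_m \subset M_m$, then invoking the definitions of $\codim(C, A_\infty)$ and $\codim(C \cap M_\infty, M_\infty)$ (note $M$ is smooth, so $\j_M = \o_M$ and the $X_{m,\infty}^{\le e}$ business collapses to all of $M_m$; for $C \cap M_\infty$ one does use that it may meet $\sing$ only trivially since $M$ is smooth, but one must keep track of $\ord_\gamma(\j_M) = e$ coming from the embedding $M \subset A$ — wait, $M$ smooth forces $e$ to measure the Jacobian of the \emph{equations} $H_1,\dots,H_c$ cutting out $M$, i.e. the relation between the ambient truncation and the intrinsic one).

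\textbf{Key steps.} First I would set up local coordinates: shrinking $A$, pick a regular system of parameters so that $H_1,\dots,H_c$ are part of a coordinate system on the smooth locus, but in general near a point of $M$ one only knows $H_1,\dots,H_c$ generate $I_M$ with their differentials spanning the conormal — the quantity $e = \ord_\gamma(\j_M)$ precisely measures the failure of this at the level of the arc $\gamma$. Second, for an arc $\gamma \in C \cap M_\infty$ with $\ord_\gamma(\j_M) = e$ and $\ord_\gamma(H_i) \ge d_i$, I would analyze the truncation map $M_m \to A_m$ (induced by the closed immersion) on a neighborhood of $\gamma_m = \psi_m(\gamma)$: using Proposition \ref{4.1} for $M$ (with its Jacobian-order-$\le e$ stratum), the fibers of $M_{m+1} \to M_m$ over the relevant stratum are $\AA^{\dim M}$, while for $A$ they are $\AA^{\dim A}$; the discrepancy $\dim A - \dim M = c$ per level, accumulated over the levels where the defining equations impose conditions, together with the "loss" $e$ and the "gain" $\sum d_i$ from the contact orders, gives the bound. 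Third, I would translate the resulting inequality $\dim \psi_m(C\cap M_\infty) \ge \dim\psi_m(C) - \big(\text{codim contribution}\big)$ back through the codimension definitions to land exactly at
$$
\codim(C\cap M_\infty, M_\infty)\le \codim(C, A_\infty)+e-\sum_{i=1}^c d_i.
$$
Since this is \cite[Lemma 8.4]{EM} and is already asserted in the excerpt to hold in arbitrary characteristic, I would in fact just cite it; but the above is the shape of the direct argument.

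\textbf{Main obstacle.} The delicate point is bookkeeping the role of $e = \ord_\gamma(\j_M)$: the immersion $M \hookrightarrow A$ need not be "coordinate-like" along $\gamma$ — the images of the $dH_i$ may degenerate to order $e$ when restricted to the arc — so one cannot naively say that $M_m \to A_m$ has fibers of codimension $\sum d_i - (\text{something})$. One must work on the $\j_M$-order-$\le e$ locus of $M_\infty$ (where $\gamma$ lives by hypothesis), use that there $M_{m+1}^{\le e} \to M_m^{\le e}$ is a piecewise-trivial $\AA^{\dim M}$-fibration (Proposition \ref{4.1} applied to $M$, valid over perfect — in particular algebraically closed — fields), and carefully compare with the corresponding statement for $A$. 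Matching the "extra $e$" on the $M$ side against the "$\sum d_i$ saved" from the contact conditions, level by level for $m \gg 0$, is exactly the computation carried out in \cite[Lemma 8.4]{EM}, and it is characteristic-free because only the smoothness of $A$ and the fibration structure — not resolution — enters.
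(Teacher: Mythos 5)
The paper itself gives no proof of this lemma: it is quoted verbatim from \cite[Lemma 8.4]{EM}, and the surrounding text only notes that the proof there is already valid in arbitrary characteristic. So your final decision to ``just cite it'' is exactly what the authors do, and on that level the approaches coincide.

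However, your sketch contains a genuine error that you should not let stand: you assert that $M$ is smooth and that therefore $\j_M=\o_M$. The hypothesis is only that the \emph{ambient} variety $A$ is non-singular; $M=H_1\cap\cdots\cap H_c$ is a complete intersection that is in general singular, and $\j_M$ is its Jacobian ideal (generated by the $c\times c$ minors of the Jacobian matrix of $H_1,\dots,H_c$ restricted to $M$), which is trivial precisely when $M$ is smooth. If $M$ were smooth one would have $e=\ord_\gamma(\j_M)=0$ for every arc, and the lemma would collapse to the easy transverse case $\codim(C\cap M_\infty,M_\infty)\le\codim(C,A_\infty)-\sum d_i$; the entire content of the statement is the correction term $e>0$ coming from the singularities of $M$. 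Indeed, in the one place the lemma is used (the proof of Theorem \ref{ia}), $M$ is a complete intersection containing the singular variety $X$ and $e=\ord_\gamma(\j_M)=\ord_\gamma(\j_X)>0$. Your parenthetical attempt to reconcile this (``$M$ smooth forces $e$ to measure the Jacobian of the equations'') does not work: that Jacobian \emph{is} $\j_M$. The later parts of your sketch --- restricting to the stratum $\cont^{\le e}(\j_M)\cap M_\infty$ and applying Proposition \ref{4.1} to the possibly singular $M$, whose truncations $M_{m+1,\infty}^{\le e}\to M_{m,\infty}^{\le e}$ are piecewise trivial $\AA^{\dim M}$-fibrations for $m\gg e$ --- are the correct ingredients and quietly contradict the smoothness claim; with that claim removed, the shape of your argument is the right one, but as written the sketch would only prove the $e=0$ case.
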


\begin{say}
\label{action}
 Consider \( {\Bbb{G}}_m=\AA^1\setminus \{0\}=\spec k[s,s^{-1}] \) as a multiplicative 
  group scheme. 
  For \( m\in \NN\cup\{\infty\} \), the morphism \( k[[t]] \to 
  k[s,s^{-1}][[ t]] \) defined by \( t\mapsto s\cdot t \) 
  gives an action  \[ \phi: {\Bbb{G}}_m \times_{\spec k} \spec k[[t]]\to \spec k[[t]] \] of 
  \( {\Bbb{G}}_m  \) on \( \spec k[[t]] \).
  Therefore, it gives an action 
  \[ \Phi: {\Bbb{G}}_m\times_{\spec k}X_\infty\to X_\infty \]
  of \( {\Bbb{G}}_m \) on \( X_\infty \).
  As \( \phi \) is extended to a morphism:
  \(\overline {\phi}:\AA^1\times _{\spec k} k[[t]]\to k[[t]]  \), we 
  obtain the extension 
   \[\overline {\Phi}: \AA^1\times_{\spec k}X_\infty\to X_\infty \]
   of \( \Phi \).
   
   In a similar way, we have an action of ${\Bbb{G}}_m$ on $X_r$ for $r\in \NN$ and
   note that the truncation morphisms are ${\Bbb{G}}_m$-equivariant.

The following lemma is proved in Lemma 8.3 in \cite{EM} for characteristic 0 case by using a resolution
of singularities.
To give a characteristic free proof,
we have only to check that the proof in \cite{EM} works under the normalized blow-up
instead of a resolution.
Let $\overline\Phi: \AA^1\times X_\infty \to X_\infty$
be the morphism given as above.
Note that if an arc $\gamma$ has the center $x\in X$, then $\Phi(0,\gamma)$ is the
constant arc over $x$.
\end{say}

\begin{lem}[\cite{EM}]
\label{lemma8.3}
Let $C$ be a non-empty cylinder on $X_\infty$ for a variety $X$.
If $\overline\Phi(\AA^1\times C)\subset C$, then $C\not\subset (\sing X)_\infty$.
\end{lem}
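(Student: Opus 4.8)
The statement to prove is Lemma~\ref{lemma8.3}: if a non-empty cylinder $C\subset X_\infty$ is stable under $\overline\Phi$ (meaning $\overline\Phi(\AA^1\times C)\subset C$), then $C\not\subset(\sing X)_\infty$. The plan is to argue by contradiction: assume $C\subset(\sing X)_\infty$, and derive that $C$ cannot be a non-empty cylinder. The key geometric input is the $\overline\Phi$-invariance, which forces $C$ to be "closed under degeneration to the constant arc": for any $\gamma\in C$, the arc $\overline\Phi(0,\gamma)$ is the constant arc at the center $x=\gamma(0)$, and by invariance this constant arc lies in $\overline C$ (and, if $C$ is locally closed, one has to be slightly careful — but the point is that $C$ contains a dense set of arcs whose specializations under the $\GG_m$-action stay in $C$). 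The first step is therefore to reduce to the case where $C$ is irreducible and to pick its generic point $\gamma$, with center the generic point $\xi$ of some irreducible component of $\sing X$; invariance under $\overline\Phi$ shows the constant arc at $\xi$ also lies in (the closure of) $C$, so in particular $\psi_0(C)$ meets this component densely.

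The second step is to replace the resolution used in \cite{EM} by the normalized blow-up and check the argument still runs. Concretely: let $Z$ be the reduced closed subscheme $(\sing X)_{\mathrm{red}}$, so $C\subset Z_\infty$. Blow up $X$ along the Jacobian ideal $\jx$ (or along $I_Z$), take the normalization $\mu\colon X'\to X$ of the result; then $\mu$ is proper and birational, $X'$ is normal, and the exceptional locus lies over $Z$. Since $C$ is a cylinder not forced to lie in $(\sing X')_\infty$ a priori — here is where one must be careful — one instead runs the dimension-counting argument of \cite[Lemma~8.3]{EM}: arcs in $C$ have center in $Z$, which has dimension $<\dim X=d$, and a cylinder has codimension behaving like $\psi_m(C)$ inside $X_m$; one shows $\dim\psi_m(C)$ grows strictly slower in $m$ than $\dim\psi_m(X_\infty)=(m+1)d$ minus a bounded term, using that $C\subset Z_\infty$ and $\dim\psi_m(Z_\infty)\le (m+1)(d-1)+\text{const}$. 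The contradiction is that a non-empty cylinder $\psi_n^{-1}(S)$ must contain arcs through a general (smooth) point of $X$ once we use $\overline\Phi$-invariance to spread $C$ out — more precisely, $\overline\Phi$-invariance and non-emptiness are incompatible with $C$ being thin, because the invariance propagates $C$ to an open dense locus via the standard argument that a $\GG_m$-stable cylinder with a point must contain the constant arc, hence by further invariance/genericity must contain arcs with center outside any fixed proper closed subset.

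The third step is to assemble these: formally, one invokes \cite[Lemma~8.3]{EM} with "resolution" replaced by "$\mu\colon X'\to X$, the normalized blow-up along $\jx$," verifying line by line that the only properties used are (a) $\mu$ proper birational, (b) $X'$ normal so that arcs not in $(\sing X')_\infty$ have well-defined order of the exceptional divisors, and (c) the exceptional locus sits over $\sing X$. Each cylinder/dimension estimate in \cite{EM} that was phrased using a smooth resolution $Y\to X$ only needs these three facts plus Proposition~\ref{4.1} (the piecewise-trivial fibration result, valid in positive characteristic), so the substitution is legitimate. I would write the proof as: "This follows exactly as in \cite[Lemma~8.3]{EM}, replacing the resolution of singularities by the normalization of the blow-up of $X$ along $\jx$; we check that the argument uses only properness, birationality, normality of the target, and that the exceptional set lies over $\sing X$," and then spell out the one or two places where a smooth resolution was genuinely invoked and show the normalized blow-up suffices there.

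**Expected main obstacle.** The delicate point is exactly where \cite{EM}'s proof uses smoothness of the resolution $Y$: in characteristic $0$ one typically writes $Y$ with simple normal crossing exceptional divisor and uses the transformation rule $\ord_\gamma(\widehat K_{Y/X})$ together with Lemma~\ref{dl3.4}-type fiber computations, which genuinely want $Y$ smooth. On the normalized blow-up $X'$, which is only normal, one must instead restrict to the smooth locus $X'_{sm}$ (which contains the generic points of all exceptional divisors, since $X'$ is normal) and run the fiber-dimension estimates there — this is the same maneuver used in the proof of Theorem~\ref{cod} and in Lemma~\ref{blowup}, so it is available, but it requires care that the arcs in $C$ actually lift to $X'_\infty$ with center generically in $X'_{sm}$. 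The lifting is guaranteed by the valuative criterion of properness for $\mu$ (as $X'$ is separated and $\mu$ proper), and genericity of the center follows because $C\not\subset(\sing X')_\infty$ would have to be checked — if it failed, one iterates with a further normalized blow-up, exactly as in the termination argument in the proof of Corollary~\ref{maincor}, and this terminates by the finiteness of codimension. So the obstacle is bookkeeping rather than a genuinely new idea, but it is the part of the writeup that needs to be done carefully.
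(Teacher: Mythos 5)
Your opening move is the right one and matches the paper: write $C=(\psi_m^X)^{-1}(S)$ and use the $\AA^1$-invariance to conclude that, for an arc $\gamma\in C$ with center $x$, the trivial $m$-jet $\mathbf 0_m^x$ at $x$ lies in $S$. But the mechanism you then invoke to reach a contradiction is not correct. You claim that the invariance ``propagates $C$ to an open dense locus'' and forces $C$ to contain arcs with center outside any fixed proper closed subset. The action $\overline\Phi$ is induced by $t\mapsto s\cdot t$; it reparametrizes arcs and \emph{fixes their centers}, so invariance under $\overline\Phi$ can never move the set of centers $\pi(C)$ off $\sing X$. The step from which your contradiction is supposed to come therefore fails.

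What the invariance actually buys is only the inclusion $C\supseteq(\psi_m^X)^{-1}(\mathbf 0_m^x)$, and the remaining work is to show that this cylinder is not thin. The paper does this by taking the normalized blow-up $f\colon X'\to X$ \emph{at the single point $x$} (not along $\jx$ or the ideal of $\sing X$), choosing a smooth point $x'\in f^{-1}(x)$, and pushing arcs \emph{down}: since $f_m(\mathbf 0_m^{x'})=\mathbf 0_m^x\in S$, one has $f_\infty\bigl((\psi_m^{X'})^{-1}(\mathbf 0_m^{x'})\bigr)\subset C$; the source is a cylinder over the trivial jet at a smooth point of $X'$, hence not thin, and since $f$ is proper birational its image is not thin in $X_\infty$, so $C\not\subset(\sing X)_\infty$. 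Your proposal runs in the opposite direction --- lifting the arcs of $C$ to a normalized blow-up along $\jx$ and iterating as in Corollary \ref{maincor} --- but under your contradiction hypothesis every arc of $C$ lies in $(\sing X)_\infty$, so there is no control on how (or whether usefully) these arcs lift, and the dimension count you sketch is never actually carried out. The missing idea is precisely this downward construction of new, non-thin arcs inside $C$, coming from the smooth locus of the normalized blow-up and entering $C$ only because $C$ is a cylinder whose base contains the trivial jet.
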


\begin{proof} Take an arc $\gamma\in C$ and denote $C=(\psi^X_m)^{-1}(S)$ for some
constructible set
$S\subset X_m$.
Let $x\in X$ be the center of $\gamma$, then by $\AA^1$-invariance of $C$,
the trivial $m$-jet ${\bold 0}_m^x$ at $x$ belongs to $S$.
Let $f:X'\to X$ be the normalized blow-up at the point $x$ and take a non-singular point $x'$ of $X'$ inside $ f^{-1}(x)$.
Let $C'$ be the cylinder $(\psi^{X'}_m)^{-1}({\bold 0}_m^{x'})$,
then $f_\infty(C')\subset C$.
As $C'$ is not a thin set in $X'_\infty$, $C$ is not thin in $X_\infty$ either, which gives that $C\not\subset (\sing X)_\infty$.

\end{proof}

The following is the inversion of adjunction formula for MJ-minimal log discrepancies which was proved for a characteristic 0 base field  in \cite{dd} and
\cite{is} independently.
We give here a proof for the arbitrary characteristic case.
We prepared necessary lemmas for the proof in arbitrary characteristic case 
that were used in the proof in characteristic 0 case.
So the outline of the proof can be the same as that of the characteristic 0 case (presented in  \cite{dd} and \cite{is} which are based on \cite{EM}).
Here, we remind the reader that if $\dim X=1$ and there is a  prime divisor $E$  over
$X$ 
such that $a_{MJ}(E; X, \a^n)<0$, then we define $\mldmj(x; X, \a^n)= -\infty$ 
(see, Definition \ref{defmldmj}).

\begin{thm}[inversion of adjunction]
\label{ia}
Let $X$ be a variety over an algebraically
closed field $k$ of an arbitrary characteristic and $A$ a smooth variety containing $X$ as a closed
subscheme of codimension $c$.
 Let $\widetilde\a\subset \o_A$ be a coherent ideal sheaf such that its image ${\a}:=\widetilde\a\o_X\subset \o_X$ is non-zero.
 Denote the ideal of $X$ in $A$ by $I_X$.
Then, for a proper closed subset  $W$ of  $X$, we have 
\begin{equation}
\label{iaclosed}
\mldmj(W; X,{\a}^n)=\mld(W;A,\widetilde\a^n I_X^c).
\end{equation}
For a point $\eta\in X$, we have 
\begin{equation}
\label{iapoint}
\mldmj(\eta; X,{\a}^n)=\mld(\eta;A,\widetilde\a^n I_X^c).
\end{equation}
\end{thm}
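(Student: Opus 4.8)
The plan is to reduce the inversion-of-adjunction statement to a direct comparison of codimensions of cylinders in $X_\infty$ and in $A_\infty$, using the jet-scheme formula for $\mldmj$ (Theorem~\ref{formula}) on the left side and the analogous (classical, smooth-ambient) formula for $\mld$ on the right side. Since $A$ is smooth, $\j_A=\o_A$ and $\mld(W;A,\widetilde\a^n I_X^c)$ is computed by $\inf\{\codim(\cont^{m_1}(\widetilde\a)\cap\cont^{m_2}(I_X)\cap\cont^{\geq 1}(I_W),A_\infty)-m_1 n-m_2 c\}$ (the characteristic-free form of the Ein–Mustață–Yasuda formula, which holds on smooth varieties without resolution). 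On the left side, Theorem~\ref{formula} gives $\mldmj(W;X,\a^n)$ in terms of $\codim(\cont^{m_1}(\a)\cap\cont^{m_2}(\j_X)\cap\cont^{\geq 1}(I_W),X_\infty)$. The bridge between the two is the closed embedding $X_\infty\hookrightarrow A_\infty$, under which $\cont^m(\a)=\cont^m(\widetilde\a)\cap X_\infty$, $\cont^{\geq 1}(I_W)_X=\cont^{\geq 1}(\widetilde I_W)\cap X_\infty$, and — this is the key identity — the order of the Jacobian ideal $\j_X$ along an arc $\gamma\in X_\infty$ relates to the order of the Jacobian ideal $\j_M$ of a generic complete intersection $M\supset X$ of the same codimension $c$; more precisely one wants $\ord_\gamma(\j_X)$ to match, generically on each relevant component, the quantity $\ord_\gamma(\j_M)-\sum d_i$ that appears in Lemma~\ref{ci}.

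The steps, in order, would be: \textbf{(1)} Reduce to $W$ a point (or its closure) and to $X$ affine, embedded in an affine space; both sides are local and the point-version \eqref{iapoint} and the closed-subset version \eqref{iaclosed} have parallel proofs, so handle \eqref{iaclosed} and remark on the change ``center in $W$'' $\rightsquigarrow$ ``center $\overline{\{\eta\}}$'' as in the proof of Theorem~\ref{formula}. \textbf{(2)} Prove the inequality $\mldmj(W;X,\a^n)\geq \mld(W;A,\widetilde\a^n I_X^c)$: take a prime divisor $E$ over $X$ computing (or approaching) the MJ-mld, realize $C_X(v)$ for $v=q\cdot\val_E$ as (the closure of) an irreducible component of an intersection of contact loci in $X_\infty$ via Corollary~\ref{maincor}, view this component inside $A_\infty$, and apply Lemma~\ref{ci} with $M$ a sufficiently general complete intersection of codimension $c$ through $X$ containing the center, to bound $\codim(C,A_\infty)$ from below by $\codim(C\cap M_\infty,M_\infty)$ minus the correction term; then use that a general such $M$ satisfies $\j_M\o_X=\j_X$ near the generic point of the arc (this is where genericity of $M$ and the hypothesis that $C\not\subset(\sing X)_\infty$, guaranteed by Lemma~\ref{lemma8.3} together with $\overline\Phi$-invariance of maximal divisorial sets, enter), matching the Jacobian discrepancy $j_E=\val_E(\j_X)$ with what the $A$-side formula sees. \textbf{(3)} Prove the reverse inequality $\mldmj(W;X,\a^n)\leq\mld(W;A,\widetilde\a^n I_X^c)$: start from an irreducible component $C$ of $\cont^{m_1}(\widetilde\a)\cap\cont^{m_2}(I_X)\cap\cont^{\geq 1}(I_W)$ in $A_\infty$ computing $\mld$ on the $A$-side; observe $m_2=\ord_\gamma(I_X)\geq c$ and that $C$ meets $X_\infty$ in a cylinder; intersect with a general complete intersection $M$ of codimension $c$ and again invoke Lemma~\ref{ci} (now in the other direction, using that the component is not thin and Corollary~\ref{maincor} turns it into a maximal divisorial set $C_X(q\cdot\val_E)$), to produce a prime divisor $E$ over $X$ with $a_{MJ}(E;X,\a^n)\leq\mld(W;A,\widetilde\a^n I_X^c)$. \textbf{(4)} Treat the degenerate case $\dim X=1$ with $\mldmj=-\infty$ separately exactly as in the proof of Theorem~\ref{formula}: a single bad divisor forces the $A$-side $\mld$ to be $-\infty$ too, via the same scaling $q\to\infty$ argument.

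The main obstacle, I expect, will be Step~(2)/(3)'s use of a \emph{general} complete intersection $M$ with $X\subset M\subset A$ of codimension $c$ such that, along the particular arc $\gamma$ (equivalently near the generic point of the exceptional divisor $E$, or on a dense open subset of the divisorial set), one has $\ord_\gamma(\j_M)=\ord_\gamma(\j_X)$ and the defining equations $H_1,\dots,H_c$ of $M$ cut out $X$ with the ``right'' orders $d_i=\ord_\gamma(H_i)$ so that $\sum d_i=\ord_\gamma(I_X)=m_2$ and Lemma~\ref{ci} becomes an equality rather than just an inequality on the relevant component. In characteristic $0$ this is where one would normally invoke generic smoothness/Bertini; the point of this paper is that for the \emph{MJ} invariant the jet-scheme bookkeeping — Lemma~\ref{ci} plus Corollary~\ref{maincor} plus $\overline\Phi$-invariance (Lemma~\ref{lemma8.3}) — substitutes for it, but one must check carefully that choosing $M$ general among complete intersections through $X$ (not merely through a point) suffices in arbitrary characteristic, i.e.\ that the bad locus where $\j_M\o_X\subsetneq\j_X$ or where the orders drop has positive codimension in the parameter space and therefore can be avoided for the fixed arc. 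Once that genericity statement is in hand, the two inequalities glue to the stated equalities \eqref{iaclosed} and \eqref{iapoint}, and the proof is complete.
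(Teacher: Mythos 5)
Your step (3) is essentially the paper's argument for the inequality $\mldmj(W;X,\a^n)\leq\mld(W;A,\widetilde\a^n I_X^c)$ (stated there as Claim \ref{E&F}: starting from a divisor $F$ over $A$, one intersects the maximal divisorial set $C_A(\val_F)$ with $X_\infty$, uses Lemma \ref{lemma8.3} to see the intersection is not contained in $(\sing X)_\infty$, then Corollary \ref{maincor} and Lemma \ref{ci}). But your step (2), the inequality $\geq$, has a genuine gap: Lemma \ref{ci} only gives an \emph{upper} bound on $\codim(C\cap M_\infty,M_\infty)$ in terms of $\codim(C,A_\infty)$, i.e.\ it transfers codimension information from the ambient space down to $M$ (equivalently $X$). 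That is exactly the direction needed for $\leq$, and it cannot be reversed to produce, from a divisor $E$ over $X$, an $A$-side cylinder of \emph{small enough} codimension, which is what $\amj(E;X,\a^n)\geq\mld(W;A,\widetilde\a^nI_X^c)$ requires. The paper's proof of $\geq$ uses no complete intersection and no Jacobian comparison at all: given an $X$-side component $C\subset\cont^{\geq m}(\a)\cap\cont^{e}(\j_X)\cap\cont^{\geq 1}(I_W)$, one observes that $C\subset\cont^{\geq m}(\widetilde\a)\cap\cont^{\geq 1}(\widetilde I_W)\cap\cont^{\geq s+1}(I_X)$ for \emph{every} $s$, and that $\codim\bigl((\psi^A_s)^{-1}(\psi_s(C)),A_\infty\bigr)=\codim(C,X_\infty)+c(s+1)$; the extra $c(s+1)$ is exactly absorbed by the term $-m_2c$ with $m_2=s+1$ in the formula for $\mld$ on $A$. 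Taking $s$ large gives a contradiction with the assumed strict inequality. You need this softer mechanism; the route through Lemma \ref{ci} does not close.

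Relatedly, the ``main obstacle'' you flag is a red herring. One never needs Lemma \ref{ci} to be an equality, never needs $\sum d_i=\ord_\gamma(I_X)$ (only $\sum d_i\geq c\cdot\val_F(I_X)$, which is automatic since each $H_i\in I_X$), and never needs a Bertini-type statement that $\j_M\o_X=\j_X$ for a general $M$. What is needed is only that $\ord_\gamma(\j_M)=\ord_\gamma(\j_X)$ for the single generic arc $\gamma$ of the chosen component, and this is arranged by picking the $c$ generators of $I_X$ whose Jacobian minor realizes $\ord_\gamma(\j_X)$ --- a pointwise choice, with no genericity over a parameter space and hence no characteristic issue. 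The genuinely delicate point, which your proposal omits, is that $M$ may acquire components other than $X$; the paper handles this by restricting to the open cylinder $V_0=V\cap\cont^{\leq e}(\j_M)\cap\cont^{\leq e}(I_M:I_X)$, where the ideal quotient $(I_M:I_X)$ (via \cite[Corollary 9.2]{EM}) forces $V_0\cap M_\infty=V_0\cap X_\infty$, and by choosing the component $C$ to minimize $\ord_\gamma(\j_X)$ and then the codimension, so that $\codim(C,X_\infty)=\codim(V_0\cap M_\infty,M_\infty)$. Finally, note that Claim \ref{E&F} yields only $q\cdot\amj(E;X,\a^n)\leq a(F;A,\widetilde\a^nI_X^c)$ with $q\geq1$, so a short extra argument (splitting into the cases $\mldmj=-\infty$ and $\mldmj\geq0$) is still required to conclude.
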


\begin{proof}
  First, we prove the inequality $\geq$ in (\ref{iaclosed}).
  We assume the contrary and  will induce a
  contradiction.
  By the assumption, there exist $e, m\in \NN$
  and an irreducible component $C\subset \cont^{\geq m}(\a)\cap\cont^e(\j_X)\cap\cont^{\geq 1}(I_W)$ such that $\codim C-mn<\mld(W;A,\widetilde\a^n I_X^c)$.
 Then,  for a sufficiently large $s\in \NN$, we obtain
\begin{equation}
\label{small}
(s+1)d-\dim \psi_m(C)-mn=\codim C-mn<\mld(W;A,\widetilde\a^n I_X^c),
\end{equation}

where $I_W$ is the defining ideal of $W$ in $X$.
As $\psi_s(C)\subset \cont^{\geq m}(\a)_s\cap\cont^{\geq 1}(I_W)_s=
\cont^{\geq m}(\widetilde\a)_s\cap\cont^{\geq 1}(\widetilde I_W)_s \cap X_s,$
where $\widetilde I_W$ is the defining ideal of $W$ in $\o_A$,
we have
$$C\subset (\psi_s^A)^{-1}(\psi_s(C))\subset
\cont^{\geq m}(\widetilde\a)\cap\cont^{\geq 1}(\widetilde I_W)\cap\cont^{\geq s+1}(I_X)=:S,$$
where $\psi^A_s: A_\infty \to A_s$ is the truncation morphism.

Now we obtain
$$(d+c)(s+1)-\dim \psi_s(C)=\codim (\psi_s(C), A_s)=\codim ((\psi_s^A)^{-1}(\psi_s(C), A_\infty)$$
$$\geq \codim (S, A_\infty) \geq \mld(W;A,\widetilde\a^n I_X^c) +mn+c(s+1),$$
which is a contradiction to (\ref{small}).

For the converse inequality in (\ref{iaclosed}), we have only to show the following claim

\begin{claim}
\label{E&F}
  For a prime divisor $F$ over $A$ with center in $W$, there is a prime divisor $E$ over $X$ with
  center in  $W$ and an integer $q\geq 1$ such that
  \begin{equation}
  \label{claim}
  q\cdot\amj(E; X, \a^n)\leq \amj(F, A, \tilde\a^nI_X^c)
\end{equation}
\end{claim}

We can prove that this induces the inequality $$\mldmj(W; X,{\a}^n)\leq\mld(W;A,\widetilde\a^n I_X^c)$$ as
follows:
If $\mldmj(W; X,{\a}^n)=-\infty$, then the required inequality is trivial.
If $\mldmj(W; X,{\a}^n)\geq 0$ then $\amj(E; X, \a^n)\geq 0$ in (\ref{claim}),
which yields $\amj(F, A, \tilde\a^nI_X^c)\geq \frac{1}{q}\amj(F, A, \tilde\a^nI_X^c)\geq
\amj(E; X, \a^n)\geq \mldmj(W; X,{\a}^n)$ for every prime divisor $F$ over $A$ with center in
$W$.

In the following, we prove Claim \ref{E&F}.
Consider the maximal divisorial set
$$V=C_A(\val_F)\subset A_\infty;$$
then we have
$$\codim (V, A_\infty)=k_F +1.$$
The intersection $V\cap X_\infty\subset X_\infty$ is a non-empty cylinder in $X_\infty$ and is not
contained in $(\sing X)_\infty$ by Lemma \ref{lemma8.3}.
Let $C$ be an irreducible component of $V\cap X_\infty$ that is not contained in the arc space of the singular locus of $X$, and the generic point $\gamma\in C$ gives the minimal value
$\ord_\gamma(\j_X)=e$ among the points in $V\cap X_\infty$.
We can also assume that $C$ has the minimal codimension among the components with
$\ord_\gamma(\j_X)=e$.
Then, by Corollary \ref{maincor}, there exists a prime divisor $E$ over $X$ with the center in $W$, 
such that the generic point $\gamma$ of $C$ gives the divisorial valuation $q\cdot\val_E$ for some $q\in \NN$.
Therefore, $C\subset C_X(q\cdot\val_E)$ and  we have the following inequalities:
\begin{equation}
\label{ia2}
q\cdot(\hk_E+1)=\codim (C_X(q\cdot\val_E), X_\infty)\leq \codim (C, X_\infty)
\end{equation}

Now, by an appropriate choice of  $c$ generators of  $I_X$, we can take a complete intersection
scheme $M$ of dimension $d$ containing $X$ such that
$\ord_\gamma(\j_M)=e$.
Then, by \cite[Corollary 9.2]{EM}, we have $$\j_M\cdot\o_X\subset ((I_M:I_X)+I_X)/I_X.$$
It follows that $\gamma$ belongs to the open cylinder
$$V_0:=V\cap \cont^{\leq e}(\j_M)\cap\cont^{\leq e}(I_M:I_X).$$
Here, we note that
$$ V_0\cap X_\infty=V_0\cap M_\infty.$$
Indeed, by the definition of $V_0$ an arc $\alpha\in V_0\cap M_\infty$ has finite order along $(I_M:I_X)$ which is a
defining ideal of the union $X'$ of the components of $M$ other than $X$.
This  means
$\alpha\not\in X'_\infty$.

Here, for every $\beta\in M_\infty$, we note that $\ord_\beta(\j_X)\leq \ord_\beta(\j_M)$; 
therefore, by the definition of $C$, we obtain
\begin{equation}
\label{ia3}
\codim (C, X_\infty)=\codim (V_0\cap X_\infty, X_\infty)=\codim (V_0\cap M_\infty, M_\infty).
\end{equation}

Apply Lemma \ref{ci} to $V_0$ and we obtain
$$\codim (V_0\cap M_\infty, M_\infty)\leq \codim (V_0, A_\infty)+e -\sum_{i=1}^c d_i,$$
where $d_i$'s satisfy  $V_0\subset \bigcap_{i=1}^c\cont^{\geq d_i}(H_i)$ for $M=H_1\cap\cdots\cap H_c$.
Then, as $V_0$ is an open subset of $V=C_A(\val_F)$, the term $\sum d_i $ in the above inequality satisfies $\sum d_i \geq c\cdot \val_F(I_X)$ and the equality
$\codim (V_0, A_\infty)=\codim (V, A_\infty)$ holds.
On the other hand, $e=\ord_\gamma(\j_X)=q\cdot \val_E(\j_X)=q\cdot j_E$.
Therefore, we have the following inequality
\begin{equation}
\label{ia4}
\codim (V_0\cap M_\infty, M_\infty)\leq \codim (V_0, A_\infty)+q\cdot\val_E(\j_X) -c\cdot \val_F(I_X).
\end{equation}

Now combining (\ref{ia2}), (\ref{ia3}), and (\ref{ia4}), we obtain
$$q\cdot(\hk_E-j_E+1)\leq \codim (V, A_\infty)-c\cdot\val_F(I_X). $$

Note that for any proper coherent ideal sheaf $\b\subset \o_M$ not vanishing on any component of $X$, we have $\val_C(\b|_X)\geq \val_V(\b)$ by the inclusion $C\subset V$.
In particular, this implies that
$$q\cdot\val_E(\b|_X)\geq \val_F(\b),$$
which yields the inequality in Claim \ref{E&F}.

The proof of the equality (\ref{iapoint}) follows in the same way.
Indeed, we have only to be careful to replace
``with the center in $W$" by ``with the center  $\overline{\{\eta\}}$" in the proof above.

\end{proof}

\begin{rem} The formula in the theorem  can be easily extended for the combination of ideals
 $\a_1,\a_2,\cdots,\a_r$ instead of one ideal $\a$. I.e., we have
 $$\mldmj(W;X,\a_1^{e_1}\a_2^{e_2}\cdots\a_r^{e_r})
={\mld}(W;A,\tilde\a_1^{e_1}\tilde \a_2^{e_2}\cdots\tilde\a_r^{e_r}I_X^c),$$
where $e_i$'s are non-negative real numbers and $\tilde\a_1,\tilde\a_2,\cdots,\tilde\a_r$ are coherent ideal sheaves of $\o_A$ such that
the extensions $\a_i=\tilde\a_i\o_X$'s are not zero.

\end{rem}

\begin{cor}
\label{useful}
Let $\eta\in X$ be a point of a variety $X$ of dimension $d$.
For each $m\in \NN$ we denote $r_m=\dim \pi_m^{-1}(x)$ for a general closed point
$x\in \overline{\{\eta\}}$.
Then, we have the equality:
\begin{equation}
\label{basicequation}
\mldmj(\eta; X):=\mldmj(x; X, \o_X)=\inf_m\left\{(m+1)d-\dim \overline{\pi_m^{-1}(\eta)}\right\}
\end{equation}
$$
= \inf_m\left\{(m+1)d-(\dim  \overline{\{\eta\}}+r_m)\right\}.$$
\end{cor}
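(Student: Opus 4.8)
The plan is to reduce, via inversion of adjunction, to a computation on a smooth ambient variety, where the Jacobian ideal is trivial and the arc-space formula of Theorem \ref{formula} becomes transparent. Since both sides of the asserted equalities are local on $X$, we may assume $X$ is embedded as a closed subscheme of codimension $c\geq 1$ in a smooth variety $A$ of dimension $d+c$ (an affine space), and we write $I_X\subset\o_A$ for the ideal of $X$. Applying Theorem \ref{ia} with $\widetilde\a=\o_A$ — so $\a=\o_X$ — and $n=0$ gives
$$
\mldmj(\eta;X,\o_X)=\mld\bigl(\eta;A,I_X^{\,c}\bigr).
$$

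To evaluate the right-hand side I would apply Theorem \ref{formula}, formula (\ref{hmldptgeq}), to the pair $(A,I_X^{\,c})$, i.e. with $A$ in the role of $X$, $I_X$ in the role of $\a$, and $c$ in the role of $n$; here $\mld=\mldmj$ on the smooth $A$ by Remark \ref{mld}(iv). Since $A$ is smooth, $\j_A=\o_A$, so $\cont^{\geq m_2}(\j_A)=A_\infty$ for $m_2=0$ and is empty for $m_2\geq 1$; only $m_2=0$ contributes, and
$$
\mld\bigl(\eta;A,I_X^{\,c}\bigr)=\inf_{p\geq 0}\Bigl\{\codim\bigl(\cont^{\geq p}(I_X)\cap\pi^{-1}(\eta),\,A_\infty\bigr)-cp\Bigr\}.
$$
For $p\geq 1$ one has $\cont^{\geq p}(I_X)=(\psi^A_{p-1})^{-1}(X_{p-1})$ with $X_{p-1}\subset A_{p-1}$ the jet scheme of $X$, hence $\cont^{\geq p}(I_X)\cap\pi^{-1}\bigl(\overline{\{\eta\}}\bigr)=(\psi^A_{p-1})^{-1}\bigl((\pi^X_{p-1})^{-1}(\overline{\{\eta\}})\bigr)$. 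Because $A$ is smooth, its truncation morphisms are Zariski-locally trivial $\AA^{d+c}$-fibrations, so $(\psi^A_{p-1})^{-1}$ preserves codimension in $A_\infty$ and carries components dominating $\overline{\{\eta\}}$ to components dominating $\overline{\{\eta\}}$; with Definition \ref{defeta} (the condition on $(\sing A)_\infty$ being vacuous) this gives, for $p\geq 1$,
$$
\codim\bigl(\cont^{\geq p}(I_X)\cap\pi^{-1}(\eta),\,A_\infty\bigr)=p(d+c)-\dim\overline{(\pi^X_{p-1})^{-1}(\eta)},
$$
where $\overline{(\pi^X_{p-1})^{-1}(\eta)}$ is the closure in $X_{p-1}$ of the fibre over $\eta$, i.e. the union of the components of $(\pi^X_{p-1})^{-1}(\overline{\{\eta\}})$ dominating $\overline{\{\eta\}}$. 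Subtracting $cp$ and putting $m:=p-1$ turns the $p$-th term into $(m+1)d-\dim\overline{\pi_m^{-1}(\eta)}$, while the remaining term $p=0$ equals $(d+c)-\dim\overline{\{\eta\}}$, which is at least $d-\dim\overline{\{\eta\}}$ — the value of the $m=0$ term — so it does not affect the infimum. This proves $\mldmj(\eta;X,\o_X)=\inf_m\{(m+1)d-\dim\overline{\pi_m^{-1}(\eta)}\}$.

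For the last equality it remains to check $\dim\overline{\pi_m^{-1}(\eta)}=\dim\overline{\{\eta\}}+r_m$. This is the theorem on the dimension of fibres: $\overline{\pi_m^{-1}(\eta)}$ is the union of the irreducible components $Z$ of $\pi_m^{-1}(\overline{\{\eta\}})$ dominating $\overline{\{\eta\}}$, each such $Z$ has general fibre over $\overline{\{\eta\}}$ of dimension $\dim Z-\dim\overline{\{\eta\}}$, and the components of $\pi_m^{-1}(\overline{\{\eta\}})$ not dominating $\overline{\{\eta\}}$ miss $\pi_m^{-1}(x)$ for a general closed point $x\in\overline{\{\eta\}}$; hence $r_m=\dim\pi_m^{-1}(x)=\dim\overline{\pi_m^{-1}(\eta)}-\dim\overline{\{\eta\}}$. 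The step I expect to be the main obstacle is the bookkeeping in the middle paragraph: confirming that the codimension in $A_\infty$ of the cylinder $(\psi^A_{p-1})^{-1}(T)$, computed by Definition \ref{defeta}, is indeed $\codim(T,A_{p-1})=p(d+c)-\dim T$, and that the single exponent $c$ produced by inversion of adjunction is exactly what cancels the $c$ in $p(d+c)$ after the reindexing $m=p-1$ — that is, that contact orders along $I_X$ on $A$ match jet levels of $X$ with the correct shift.
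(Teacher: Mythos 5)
Your proposal is correct and follows essentially the same route as the paper: localize, embed $X$ in a smooth $A$, apply Theorem \ref{ia} to get $\mldmj(\eta;X)=\mld(\eta;A,I_X^c)$, evaluate the latter by Theorem \ref{formula} using $\cont^{\geq m+1}(I_X)=(\psi^A_m)^{-1}(X_m)$ and the local triviality of truncations on smooth $A$, and finish with $\dim\overline{\pi_m^{-1}(\eta)}=\dim\overline{\{\eta\}}+r_m$. Your treatment is in fact slightly more careful than the paper's (explicit handling of the $p=0$ term and of the generic-fibre dimension identity), but the argument is the same.
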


\begin{proof}
Since the statement is local, we can assume that $X$ is a closed subvariety of codimension
$c$ of a non-singular
variety $A$.
By Theorem \ref{ia}, we have
$$\mldmj(\eta; X)=\mld(\eta;A, I_X^c)$$
and by Theorem \ref{formula},
we have
$$\mld(\eta;A, I_X^c)=\inf_{m\geq 0} \left\{\codim (\cont^{m+1}(I_X)\cap (\pi^A)^{-1}(\eta), A_\infty)- (m+1)c
\right\},$$
where, for our convenience  later, we shift $m$ to $m+1$ on the right-hand side.
By Definition \ref{defeta}, $\codim (\cont^{m+1}(I_X)\cap(\pi^A)^{-1}(\eta), A_\infty)$ is the minimal
codimension of irreducible components $C\subset (\psi_m^A)^{-1}(X_m)\cap \pi^{-1}(\overline{\{
\eta\}})$ in $A_\infty$ that dominate $\overline{\{
\eta\}}$.
Therefore,
$$\codim (\cont^{m+1}(I_X)\cap (\pi^A)^{-1}(\eta), A_\infty)=\codim \left( \overline{(\pi_m^X)^{-1}(\eta)}, A_m \right).$$
As $\codim \left( \overline{(\pi_m^X)^{-1}(\eta)}, A_m \right)= (m+1)(d+c)-\dim \overline{(\pi_m^X)^{-1}(\eta)}$, we have the first equality in the corollary.
For the second equality, we need to know that
$$\dim \overline{\pi_m^{-1}(\eta)}=\dim \overline{\{\eta\}}+r_m.$$

\end{proof}

\begin{cor}
\label{very}
Let $X$ be a variety of dimension $d$ and $\a$ a coherent ideal sheaf of $\o_X$.
Let $V\subset W$ be two irreducible proper closed subsets of $X$ and
$\eta_V$ and $\eta_W$ are the generic points of $V$ and $W$, respectively.
Then, 
\begin{enumerate}
\item[(i)] We have the following inequality:
$$\mldmj(\eta_V; X, \a)\leq \codim (V, X),$$
where the equality holds if and only if $\eta_V$ is a regular point and $\a=\o_X$ around $\eta_V$.

In particular, if $x\in X$ is a closed point, then
$$\mldmj(x;X,\a)\leq d$$
and the equality holds if and only if $X$ is smooth at $x$ and $\a=\o_X$ around $x$.
(This statement for the usual $\mld$ is Shokurov's conjecture and is not yet proved.)
\item[(ii)] $\mldmj(\eta_V; X, \a)\leq \mldmj(\eta_W; X, \a)+\codim(V, W),$\\
where the equality holds for very general $V$ in $W$;
i.e.,  $\eta_V$ is in the complement of a countable number of closed  subsets in $W$.
(If $k$ is an uncountable field, then this subset is non-empty.)
\item[(iii)] If $\cha\ k=0$, then the equality in (ii) holds for general $V$ in $W$.
\end{enumerate}
\end{cor}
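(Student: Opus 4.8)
The plan is to read part (i) off Corollary \ref{useful}, and to reduce parts (ii) and (iii) to the smooth ambient case via inversion of adjunction (Theorem \ref{ia}) and then exploit the jet-scheme formula (Theorem \ref{formula}); throughout, the engine is upper semicontinuity of the fibre dimensions of the truncation morphisms.

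\emph{Part (i).} Since $\val_E(\a)\ge 0$ for every exceptional prime divisor $E$ over $X$ with centre $V$, we have $\amj(E;X,\a)\le\amj(E;X)$ and hence $\mldmj(\eta_V;X,\a)\le\mldmj(\eta_V;X)$; by Corollary \ref{useful} applied with $m=0$ (where $\pi_0=\mathrm{id}_X$, so $\overline{\pi_0^{-1}(\eta_V)}=V$), $\mldmj(\eta_V;X)\le d-\dim V=\codim(V,X)$. This is the inequality, and $V=\{x\}$ gives the ``in particular'' statement. Now assume $\mldmj(\eta_V;X,\a)=\codim(V,X)$; then $\mldmj(\eta_V;X)=\codim(V,X)$ as well, so in the formula of Corollary \ref{useful} the $m=1$ term forces $2d-\dim V-r_1\ge d-\dim V$, i.e.\ $r_1=\dim_k T_xX\le d$ for a general closed point $x\in V$, so a general closed point of $V$, and therefore $\eta_V$, is a regular point of $X$. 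Then $X$ is smooth near $\eta_V$ (as $k$ is perfect), $\mldmj(\eta_V;X,\a)=\mld(\eta_V;X,\a)$, and blowing up the reduced $\overline{\{\eta_V\}}$ — which at $\eta_V$ is the blow-up of the maximal ideal of the regular local ring $\o_{X,\eta_V}$ — produces an exceptional divisor $E$ with log discrepancy $\codim(V,X)-\val_E(\a)$ for $(X,\a)$; so equality forces $\val_E(\a)=0$, i.e.\ $\a_{\eta_V}\not\subseteq\mathfrak m_{\eta_V}$, i.e.\ $\a=\o_X$ near $\eta_V$. Conversely, if $\eta_V$ is regular and $\a=\o_X$ near $\eta_V$, then $\val_E(\a)=0$ for all such $E$, so $\mldmj(\eta_V;X,\a)=\mldmj(\eta_V;X)$, and since the jet schemes of a smooth variety are Zariski-locally-trivial affine-space bundles over it, $\dim\overline{\pi_m^{-1}(\eta_V)}=\dim V+md$ for every $m$, so every term of the formula in Corollary \ref{useful} equals $\codim(V,X)$ and equality holds. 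This settles (i), which for $\a=\o_X$ is the $\mldmj$-analogue of Shokurov's conjecture.

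\emph{Parts (ii) and (iii).} The statement is local at $\eta_V$, so we may embed an affine neighbourhood of $\eta_V$ as a closed subscheme of codimension $c$ of a smooth variety $A$; note that $\eta_W$ lies in this neighbourhood (it generises $\eta_V$) and that $\codim(V,W)=\dim W-\dim V$ is intrinsic. By inversion of adjunction (Theorem \ref{ia}), $\mldmj(\eta_V;X,\a)=\mld(\eta_V;A,\b)$ and $\mldmj(\eta_W;X,\a)=\mld(\eta_W;A,\b)$ for $\b=\widetilde\a\,I_X^{\,c}$ with $\widetilde\a$ a lift of $\a$, so (ii) becomes the inequality $\mld(\eta_V;A,\b)\le\mld(\eta_W;A,\b)+\codim(V,W)$ on the \emph{smooth} variety $A$. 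For this I would use the jet formula (Theorem \ref{formula} with Remark \ref{extended}), $\mld(\eta;A,\b)=\inf_{\vec m}\{\codim(C_{\vec m}\cap\pi^{-1}(\eta),A_\infty)-\ell(\vec m)\}$, where $C_{\vec m}$ is a finite intersection of contact loci and $\ell$ is linear. Fix $\vec m$; write $C_{\vec m}=(\psi^A_N)^{-1}(\widetilde C)$ for $N\gg 0$ and pick an irreducible component $D$ of $\widetilde C\cap(\pi^A_N)^{-1}(W)$ dominating $W$ of minimal codimension in $A_N$; the dominant morphism $\overline D\to W$ is equidimensional over a dense open $W_{\vec m}\subseteq W$, and if $\eta_V\in W_{\vec m}$ then $D\cap(\pi^A_N)^{-1}(V)$ has a component dominating $V$ of codimension $\le\codim(D,A_N)+\codim(V,W)$ in $A_N$, whose $\psi^A_N$-preimage lies in $C_{\vec m}\cap\pi^{-1}(V)$; taking infima over $\vec m$ gives the inequality, and one checks by the same semicontinuity that nothing is lost in the infimum for special $\vec m$. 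Choosing $\eta_V$ outside $\bigcup_{\vec m}(W\setminus W_{\vec m})$ — a countable union of proper closed subsets of $W$, with non-empty complement when $k$ is uncountable — all these inequalities become equalities, which proves (ii). In characteristic $0$ the infimum defining $\mld(\eta_W;A,\b)$ is attained by a single divisorial valuation (finiteness of the number of irreducible components of a cylinder, \cite[Proposition 3.6]{DEI}), hence by a single maximal divisorial set; then only one proper closed subset of $W$ has to be avoided, so equality holds for $V$ general in $W$, which is (iii).

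The main obstacle is the equidimensionality bookkeeping inside (ii): one must verify that over a dense open of $W$ the relevant component $D$ of the contact-locus cylinder is flat enough over $W$ that restricting to $(\pi^A_N)^{-1}(V)$ both keeps a component dominating $V$ and lowers the codimension by exactly $\codim(V,W)$. This is precisely where upper semicontinuity of fibre dimension and the affine-space-bundle structure of jet schemes of a smooth variety are used, and it is also what forces ``very general'' rather than ``general'' in positive characteristic, where the infimum in the jet formula need not be realised by a single valuation. Granting this, the reduction to a smooth ambient space and the jet-scheme formula make the positive-characteristic argument run exactly as in the characteristic $0$ case.
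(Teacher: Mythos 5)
Part (i) of your proposal is correct and is essentially the paper's argument (the $m=0$ and $m=1$ terms of Corollary \ref{useful}); your extra blow-up step for forcing $\a=\o_X$ and your explicit converse are fine. The problem is in (ii). The statement asserts the inequality $\mldmj(\eta_V;X,\a)\le\mldmj(\eta_W;X,\a)+\codim(V,W)$ for \emph{every} $V\subset W$, and equality only for very general $V$. Your argument — generic equidimensionality of $\overline D\to W$ over a dense open $W_{\vec m}$ — only produces the comparison $s_V(\vec m)\le s_W(\vec m)+\codim(V,W)$ when $\eta_V\in W_{\vec m}$, so after intersecting over all $\vec m$ you have proved both the inequality and the equality only for very general $V$; the unconditional inequality is not established. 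The missing point is genuine: what is needed is upper semicontinuity of the fibre dimension of $\pi_r|_{S_{m,n}}:S_{m,n}\to A$ as a function on the \emph{target}, where $S_{m,n}=\psi_r(\cont^{\ge m}(I_X)\cap\cont^{\ge n}(\widetilde\a))$. Chevalley gives semicontinuity on the source only, and since $\pi_r|_{S_{m,n}}$ is not proper the fibre over $\eta_V$ could a priori be too small (or miss the closure of the generic part altogether). The paper's proof supplies exactly this ingredient: contact loci are invariant under the $\mathbb{G}_m$-action on jets (see \ref{action}), so $S_{m,n}$ is $\mathbb{G}_m$-invariant and $\pi_r$ restricted to $S_{m,n}\setminus\sigma(A)$ factors through the \emph{projective} morphism $(S_{m,n}\setminus\sigma(A))/\mathbb{G}_m\to A$, whence $\delta_W\le\delta_V$ for all $V$ and all $(m,n)$. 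You name "upper semicontinuity of fibre dimension" as the engine but never justify it, and without the $\mathbb{G}_m$-quotient it is false in general.

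A second, smaller issue is (iii). Equality requires the reverse inequality $s_V(\vec m)\ge\mldmj(\eta_W)+\codim(V,W)$ for \emph{all} $\vec m$, since $s_V(\vec m)=s_W(\vec m)+\codim(V,W)+(\delta_W-\delta_V)$ and a defect $\delta_V>\delta_W$ at a non-minimizing $\vec m$ can still drag $\inf_{\vec m}s_V(\vec m)$ below the target value. So the fact that in characteristic $0$ the infimum for $W$ is attained by a single divisorial valuation does not reduce the locus to be avoided to a single closed subset. The paper instead proves (iii) by fixing an embedded log resolution of $(X,\a)$ with a divisor $E$ computing $\mldmj(\eta_W;X,\a)$ and, for $\eta_V$ outside the (finitely many) lower-dimensional centres, exhibiting the divisor obtained by blowing up $f^{-1}(V)\cap E$ as one computing $\mldmj(\eta_V;X,\a)$. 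You should either adopt that argument or explain how to control $\delta_V(\vec m)-\delta_W(\vec m)$ uniformly in $\vec m$ using the resolution.
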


\begin{proof} First note that if $\a\neq \o_X$ around $\eta_V$, then
$$\mldmj(\eta_V; X, \a)< \mldmj(\eta_V; X, \o_X)$$
by the definition of MJ-log discrepancy.
Here, by Corollary \ref{useful}, we have
\begin{equation}
\label{use2}
\mldmj(\eta_V;X, \o_X)\leq (m+1)d -r_m-\dim V
\end{equation}

for every $m\geq 0$.
Therefore, in particular for $m=0$, we obtain
$$\mldmj(\eta_V;X, \o_X)\leq \codim (V, X).$$

Now assume the equality in (i): $\mldmj(\eta_V; X,\o_X)=d-\dim V$.
then, by the first comment in this proof, we have $\a=\o_X$.
Consider the inequality (\ref{use2}) for $m=1$,
we obtain
\begin{equation}
\label{use3}
d-\dim V\leq (d-\dim V) +d-r_1.
\end{equation}
Here, we note that $r_1$ is the dimension of the tangent space of $X$ at a
general closed point $x\in V$.
Then, the  inequality (\ref{use3}) gives $r_1=d$, which means that  general points of V are non-singular in $X$.

For the proof of (ii), let $$s_{V}(m,n):= \codim\left(\cont^{\geq m}(I_X)\cap\cont^{\geq n}(\a)\cap(\psi^A)^{-1}(\eta_V), A_\infty\right)-mc-n.$$

$$s_{W}(m,n):= \codim\left(\cont^{\geq m}(I_X)\cap\cont^{\geq n}(\a)\cap(\psi^A)^{-1}(\eta_W), A_\infty\right)-mc-n.$$
Then, by Theorem \ref{formula} and Theorem \ref{ia}, we have
$$\mldmj(\eta_V;X, \a)=\inf_{m,n}s_{V}(m,n) \ \mbox{and}\ \ \mldmj(\eta_W;X, \a)=\inf_{m,n}s_{W}(m,n).$$

Remember the action of ${\Bbb G}_m$ on $A_r$ (\ref{action}).
Then, for each $m, n$, by an appropriate $r=r(m,n)\in \NN$ and a ${\Bbb G}_m$-invariant closed subset 
$$S_{m,n}=\psi_r(\left(\cont^{\geq m}(I_X)\cap\cont^{\geq n}(\a)\right)\subset A_r,$$
 we can express
$$s_V(m,n)-s_W(m,n)=\dim S_{m,n}\cap \pi_r^{-1}(\eta_W)-\dim S_{m, n}\cap \pi_r^{-1}(\eta_V).$$
$$=(\dim W+\delta_W) -(\dim V+\delta_V),$$
where $\delta_V$ and $\delta_W$ are the dimensions of general fibers of $\pi_r|_{S_{m,n}\cap\pi_r^{-1}(V)}$ and  $\pi_r|_{S_{m,n}\cap\pi_r^{-1}(W)}$, respectively.
As $S_{m,n}$ is  ${\Bbb G}_m$-invariant,  the restricted morphism $\pi_r|_{S_{m,n}\setminus \sigma(A)}:
{S_{m,n}\setminus \sigma(A)}
\to A $
factors through the projective morphism $\pi'_r: ({S_{m,n}\setminus \sigma(A)})/ {\Bbb G}_m\to A$.
Here, $\sigma(A)\subset A_r$ is the subset consisting of the trivial $r$-jets on $A$.
Therefore, dimension of fibers of $\pi'_r$ and also $\pi_r|_{S_{m,n}}$ are upper-semi-continuous,
which implies the inequality $\delta_W\leq \delta_V$.
This yields
$$s_V(m,n)-s_W(m,n)\leq \codim (V, W)$$
for every $m, n$, which yields the inequality in (ii).

For a fixed $m, n$, there exists a closed subset $F_{m,n}\subset W$ such that for every point
$\eta_V\in W$
which is not contained in $F_{m,n}$ satisfies $\delta_V=\delta_W$.
Then, for these $V$ we obtain
$$s_V(m,n)-s_W(m,n)= \codim (V, W).$$
Therefore, if $\eta_V$ is not contained in  $F=\bigcup_{m,n}F_{m,n}$,
then, we obtain the equality
$$\mldmj(\eta_V; X, \a)= \mldmj(\eta_W; X, \a)+\codim(V, W).$$

For the proof of (iii), assume that $\cha\ k =0$.
Let $f: A'\to A$ be an embedded log resolution of $(X,\a)$ with at least one exceptional divisor
 with the center $\overline{\{\eta_W\}}$.
 Then, there is an exceptional prime divisor $E\subset A'$ computing the $\mldmj(\eta_W; X,\a)$.
 If $\eta_V\in W$ is not in the union of the lower dimensional centers of the exceptional divisors of $f$,
 then, the divisor obtained by the blow-up with the center $f^{-1}(V)\cap E$ computes
 $$\mldmj(\eta_V; X, \a)=\mldmj(\eta_W; X,\a)+\codim (V, W).$$
\end{proof}

\begin{rem}
 As is seen in the proof of (iii), one can see that for a positive characteristic case if (iii) in Corollary \ref{very} does not
 hold,  it shows a counter example of the existence of resolution of singularities.

\end{rem}

\vskip.5truecm
\section{MJ-canonical  and MJ-log-canonical singularities}
\noindent
We say that $X$ has MJ-canonical (resp. MJ-log-canonical) singularities if
the pair $(X, \o_X)$ has MJ-canonical (resp. MJ-log-canonical) singularities.
We denote $\mldmj(x; X,\o_X)$ by $\mldmj(x; X)$.
In this section, we will study the nature of MJ-canonical singularities and MJ-log-canonical
singularities.

\begin{lem}[{\cite[Proposition 3.3]{ei}}]\label{emb}
  Let $x\in X$ be a closed point.
  If $X$ is MJ-canonical at $x$, then the embedding dimension satisfies $$\emb(X,x)\leq 2d-1.$$
  If $X$ is MJ-log-canonical at $x$, then the embedding dimension satisfies $$\emb(X,x)\leq 2d.$$
\end{lem}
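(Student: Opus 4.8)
The plan is to bound the embedding dimension by exhibiting, whenever $\emb(X,x)$ is too large, an exceptional prime divisor $E$ over $X$ with center at $x$ whose MJ-log discrepancy is too small, contradicting the MJ-canonical (resp. MJ-log-canonical) hypothesis. Concretely, set $N=\emb(X,x)$ and embed a neighborhood of $x$ in $A=\AA^N$ as a closed subscheme of codimension $c=N-d$, with $I_X$ the ideal of $X$ in $A$ and $\mathfrak{m}$ the maximal ideal of $x$. Since $\emb(X,x)=N$, the ideal $I_X$ is contained in $\mathfrak{m}^2$ (no linear part), so one has the inclusion $I_X\subset \mathfrak{m}^2$ of ideals in $\o_{A,x}$. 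The key input is the inversion of adjunction, Theorem \ref{ia}, which gives
\[
\mldmj(x;X)=\mld(x;A,I_X^c).
\]
So it suffices to produce a divisor $F$ over $A$ with center $x$ for which $a(F;A,I_X^c)=k_F+1-c\cdot\val_F(I_X)$ is less than $1$ (resp. less than $0$).

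First I would take $F=E_0$ to be the exceptional divisor of the blow-up of $A$ at the point $x$; then $k_{E_0}=N-1$ and $\val_{E_0}(\mathfrak{m})=1$, hence $\val_{E_0}(I_X)\geq 2$ because $I_X\subset\mathfrak{m}^2$. Therefore
\[
a(E_0;A,I_X^c)=k_{E_0}+1-c\cdot\val_{E_0}(I_X)\leq N-c\cdot 2 = N-2(N-d)=2d-N.
\]
If $N\geq 2d$, this is $\leq 0$, but $E_0$ is a \emph{non-exceptional}-type obstruction only in the sense that it is a divisor over $A$, not over $X$; what we actually need is a divisor over $X$. This is handled exactly as in the proof of Claim \ref{E&F} inside Theorem \ref{ia}: applying that claim to $F=E_0$ produces a prime divisor $E$ over $X$ with center $x$ and an integer $q\geq 1$ with $q\cdot\amj(E;X)\leq a(E_0;A,I_X^c)\leq 2d-N$. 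Hence if $N\geq 2d+1$ we get $\amj(E;X)<0$ with $E$ exceptional over $X$ (one checks $E$ is exceptional: its center is the point $x$, which has dimension $<d$), contradicting MJ-log-canonicity; and if $N\geq 2d$ we get $\amj(E;X)\leq 0<1$, contradicting MJ-canonicity. This yields $\emb(X,x)\leq 2d$ in the MJ-log-canonical case and $\emb(X,x)\leq 2d-1$ in the MJ-canonical case.

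The step I expect to be the main obstacle is the passage from the divisor $E_0$ over the ambient space $A$ to an honest divisor $E$ over $X$ with a controlled MJ-log discrepancy — i.e. the inequality $q\cdot\amj(E;X)\leq a(E_0;A,I_X^c)$ — together with the verification that the resulting $E$ is genuinely \emph{exceptional} over $X$ (so that the MJ-canonical/MJ-log-canonical hypothesis, which only constrains exceptional divisors, actually applies). Both points are already contained in the machinery built in the excerpt: the first is precisely Claim \ref{E&F} (whose proof uses Corollary \ref{maincor}, Lemma \ref{ci}, Lemma \ref{lemma8.3}, and Theorem \ref{cod}), and the second follows because the center of $E$ lies over $x$, a point of codimension $\geq 1$ in $X$, so a partial resolution exposing $E$ cannot be an isomorphism at the generic point of $E$. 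A minor wrinkle is the boundary case $d=1$ (where $\mldmj$ can be $-\infty$ by convention), but there the inequality $\emb(X,x)\leq 2d$ is immediate since a curve in $\AA^1$ is smooth, and $\emb(X,x)\leq 2d-1=1$ in the MJ-canonical case likewise forces $X$ smooth at $x$, consistent with Corollary \ref{very}(i). So the proof amounts to assembling the blow-up computation above with the already-established Claim \ref{E&F} and the definition of ``exceptional,'' with no genuinely new difficulty beyond bookkeeping.
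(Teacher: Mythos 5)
Your argument is correct, but it takes a different route from the paper's. The paper disposes of this lemma by citing the characteristic-zero proof of \cite[Proposition 3.3]{ei} and observing that its only real input, the jet-scheme formula of Corollary \ref{useful}, holds in arbitrary characteristic: taking $m=1$ there gives $\mldmj(x;X)\leq 2d-\dim\pi_1^{-1}(x)=2d-\emb(X,x)$, since the first jet fiber over $x$ is the Zariski tangent space. You instead go through inversion of adjunction (Theorem \ref{ia}) directly and bound $\mld(x;A,I_X^c)$ from above by the single divisor $E_0$ obtained by blowing up $A=\AA^N$ at $x$, using $k_{E_0}=N-1$ and $\val_{E_0}(I_X)\geq 2$ from minimality of the embedding. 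The two computations are two faces of the same fact (the tangent space versus the exceptional divisor of the point blow-up), but your version is a little leaner: it needs only the definition of $\mld$ as an infimum on the smooth ambient space, not the full codimension formula of Theorem \ref{formula} that underlies Corollary \ref{useful}. Both, of course, lean on the heavy Theorem \ref{ia}.

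Two blemishes, neither fatal. First, your detour through Claim \ref{E&F} is unnecessary and slightly muddles the logic: once Theorem \ref{ia} gives $\mldmj(x;X)=\mld(x;A,I_X^c)$, you only need an upper bound for the right-hand side, and $E_0$ is already an exceptional prime divisor over the \emph{smooth} variety $A$ with center exactly $\{x\}$, so $\mld(x;A,I_X^c)\leq a(E_0;A,I_X^c)\leq 2d-N$ immediately; re-extracting a divisor $E$ over $X$ just re-proves by hand the inequality $\leq$ of Theorem \ref{ia} that you have already invoked. Second, your treatment of $d=1$ is garbled (``a curve in $\AA^1$ is smooth'' is not the point; curves can have arbitrarily large embedding dimension), but this does not matter because the main argument applies verbatim when $d=1$: the convention $\mldmj=-\infty$ only kicks in when the infimum is negative, which is exactly the situation you are excluding under the MJ-(log-)canonical hypothesis.
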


\begin{proof} This is proved in \cite[Proposition 3.3]{ei} in case $\cha\ k=0$.
However, the main point of the proof is the formula in Corollary \ref{useful} and
we have seen in the proof of the corollary that the formula holds true for an arbitrary characteristic.
Therefore, the same proof works for the  lemma in the positive characteristic case too.
\end{proof}

\begin{defn}\label{pseudo}
We say that a variety $X$ has a pseudo rational singularity at $x\in X$ if
\begin{enumerate}
\item
  $X $ is normal around $x$;
\item
 For every partial resolution $f:Y\to X$, which means a proper birational morphism with normal $Y$,
 the equality $$f_*\omega_Y=\omega_X $$
holds around $x$;
\item
  $X$ is Cohen--Macaulay around $x$.

\end{enumerate}
\end{defn}

Note that if $\cha\ k=0$ or if $\dim X=2$, this definition is equivalent to the following:
\begin{enumerate}
\item
  $X $ is normal around $x$;
\item[(2')]
    For every resolution $f:Y\to X$ the vanishing
    $$R^if_*\o_Y=0$$
    holds for $i>0$ around $x$.
 \end{enumerate}

 The singularity $(X,x)$ satisfying (1) and (2') is called a rational singularity.

\begin{prop} \label{mjcano} Let a variety $X$ have at worst MJ-canonical singularities.
Then $X$ has normal hypersurface singularities  in codimension 2.

If $\cha\ k=0$, then $X$ is normal;  furthermore, the singularities on $X$ are
rational.
\end{prop}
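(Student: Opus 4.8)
The plan is to exploit Corollary \ref{useful}, which expresses $\mldmj(\eta;X)$ for any point $\eta$ in terms of jet-fiber dimensions, together with the monotonicity result Corollary \ref{very}(i)–(ii). First I would attack the codimension-$2$ statement. Let $\eta$ be a codimension-$2$ point of $X$, i.e. $\dim\overline{\{\eta\}}=d-2$. By Corollary \ref{very}(i) we have $\mldmj(\eta;X)\le\codim(\overline{\{\eta\}},X)=2$, and since $X$ is MJ-canonical, $\mldmj(\eta;X)\ge 1$. Localizing at $\eta$ we get a two-dimensional local ring, and I want to feed the numeric bound $1\le\mldmj\le 2$ into Corollary \ref{useful}: for $m=1$ this gives $2d-\dim\overline{\pi_1^{-1}(\eta)}\ge 1$, i.e. $\dim\overline{\pi_1^{-1}(\eta)}\le 2d-1$, hence the embedding dimension of $X$ at a general closed point of $\overline{\{\eta\}}$ is $\le d+(d-1)-\dim\overline{\{\eta\}}$... more cleanly, Lemma \ref{emb} applied to the local ring at $\eta$ (a closed point after localization) says $\emb\le 2\cdot 2-1=3$ in the MJ-canonical case, so $X$ near $\eta$ is embeddable in a smooth $3$-fold, i.e. is a hypersurface in codimension $2$. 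Normality in codimension $2$ then follows because a hypersurface (in particular Gorenstein, hence Cohen–Macaulay) singularity satisfies Serre's $S_2$; combined with $R_1$ — which holds since MJ-canonical implies the variety is regular in codimension $1$ (this is immediate: a codimension-$1$ point $\eta$ has $\mldmj(\eta;X)\le 1$ by Corollary \ref{very}(i) with equality forced, hence regularity there) — Serre's criterion gives normality in codimension $2$.

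The second assertion, for $\cha\ k=0$, has two parts: normality of all of $X$, and rationality. For normality, the standard argument is that an MJ-canonical (even just "MJ-log-canonical in codimension $1$") variety is $R_1$ as above, and one needs $S_2$; in characteristic $0$ one knows (e.g. from \cite{ei} or via the inversion of adjunction and a resolution) that MJ-canonical singularities are Cohen–Macaulay, which forces $S_2$ and hence, with $R_1$, normality. For rationality I would invoke the known result that in characteristic $0$, MJ-canonical implies pseudo-rational (Definition \ref{pseudo}), proven in \cite{EIM} / \cite{ei}; the mechanism is: MJ-canonicity bounds the discrepancies in an embedded log resolution, and a Grauert–Riemenschneider / Kodaira-vanishing argument shows $R^if_*\o_Y=0$ for a resolution $f:Y\to X$, which combined with normality (just established) and the characteristic-$0$ equivalence noted after Definition \ref{pseudo} is exactly the definition of rational singularities.

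The main obstacle I anticipate is twofold. First, in the codimension-$2$ part, one must be careful that the formula of Corollary \ref{useful} and Lemma \ref{emb} are applied to the \emph{local} variety at $\eta$: passing from "MJ-canonical at the point $\eta$" to "MJ-canonical at the closed point $\eta$ of $\spec\o_{X,\eta}$" requires checking that MJ-log discrepancies and embedding dimension localize correctly, which should follow from Remark \ref{mld}(ii) but needs a short verification. Second, and more seriously, the characteristic-$0$ rationality claim genuinely uses tools unavailable in positive characteristic — resolution of singularities and Kodaira-type vanishing — so this is where the proof necessarily cites \cite{ei} or \cite{EIM} and cannot be made characteristic-free; the honest thing is to state this part as a black-box application of the known characteristic-$0$ theory. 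A minor additional care point: to conclude the \emph{hypersurface} (not just embedding-dimension $\le 3$) statement in codimension $2$ one should note that a two-dimensional local ring of embedding dimension $\le 3$ that is a domain and not regular is automatically a hypersurface, which is elementary but worth spelling out.
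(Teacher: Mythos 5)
Your proposal matches the paper's proof: the paper likewise deduces $R_1$ by forcing equality in Corollary \ref{very}(i) at codimension-one points, then uses the $m=1$ case of the jet formula (Corollary \ref{useful}) at the generic point $\zeta$ of a codimension-two component of the singular locus to squeeze $1\leq \mldmj(\zeta;X)\leq 2d-r_1-(d-2)\leq 1$ (using $r_1\geq d+1$ at singular points), so $r_1=d+1$, giving hypersurface, hence Gorenstein, hence $S_2$, hence normality in codimension two by Serre; the characteristic-zero statement is quoted from \cite{dd} and \cite{EIM} exactly as you do. The one caution is that your alternative route through Lemma \ref{emb} applied to the localized ring at $\eta$ is not justified within the paper's framework (the residue field at a non-closed point is not algebraically closed, and the lemma is stated for closed points), but your first, direct computation bounding $\dim\overline{\pi_1^{-1}(\eta)}$ is precisely the paper's argument, so that detour is dispensable.
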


\begin{proof}
  The second statement is proved in \cite[Theorem 7.7]{dd} and \cite[Corollary 3.7]{EIM}
  independently.
  Regarding the first statement, since the problem is local, we may assume that $X$ is a closed subvariety  in the
affine space $A=\AA^N$.
  Then, as $X$ is MJ-canonical, it follows that $\mldmj(\eta; X)=\mld(\eta, A, I_X^c)\geq 0,
  $
  where $I_X$ is the defining ideal of $X $ in $A$ and $c=\codim (X, A)$.

  The first statement is proved by blow-up $A$ at an irreducible component of the singular locus of $X$ and checking the discrepancy of $(A, I_X^c)$ as in the proof of Proposition \ref{mjlogcodim1}.
  But, here, we present a proof  using jet schemes discussions.

 Let $\eta\in X$ be the generic point of an irreducible closed subset of codimension one.
  Then, as $X$ is MJ-canonical, it follows that $\mldmj(\eta; X)\geq 1$.
  On the other hand, we have $\mldmj(\eta; X)\leq d-\dim\overline{\{\eta\}}=1$ by Corollary \ref{very}.
  Then, we obtain that the equality in (i) in Corollary \ref{very} holds, which yields that $X$ is
  regular at $\eta$.
  Now, $\dim \sing(X)\leq d-2$.
  Let $\zeta\in X$ be the generic point of an irreducible component of $\sing(X)$ of codimension 2.
  Then, by inequality (\ref{use2}), it follows
  $$1\leq \mldmj(\zeta, X)
\leq (m+1)d -r_m-\dim \overline{\{\zeta\}},$$
where $r_m $ is the dimension of a general fiber of $\pi_m$.
Considering the case $m=1$,
we obtain
\begin{equation}\label{normal}
1\leq   \mldmj(\zeta, X) \leq 2d-r_1-(d-2)\leq 1.
\end{equation}
Here, the last inequality is showed as follows: note that $r_1$ is the dimension of Zariski tangent space of $X$
at a general point; therefore, $r_1\geq d+1$, as the point is a singular point.
Therefore, all inequalities in (\ref{normal}) become equalities, in particular $r_1=d+1$,
which means that a general point is a hypersurface  singularity.
Hence, $X$ is a Gorenstein variety in codimension 2 and satisfies $R_1$, which yields that
$X$ is normal in codimension 2 by Serre's criteria.
\end{proof}

\begin{cor}
  A two-dimensional singularity $(X,x)$ is  MJ-canonical if and only if it is a rational double point
  in arbitrary characteristic.
\end{cor}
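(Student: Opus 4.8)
The plan is to combine Proposition \ref{mjcano} with the classical classification of rational double points in order to get a clean equivalence in dimension two. First I would establish the "only if" direction: suppose $(X,x)$ is MJ-canonical with $\dim X = 2$. By Proposition \ref{mjcano}, $X$ has normal hypersurface singularities in codimension $2$; but in a surface every closed point has codimension $2$, so $X$ is locally a normal hypersurface at $x$, in particular Gorenstein. Next I would invoke Remark \ref{comin}(2): since $X$ is now known to be normal and locally a complete intersection near $x$, the MJ-canonical condition at $x$ coincides with the usual canonical condition. A canonical surface singularity is, by definition, a normal surface singularity with $\hke \geq 0$ for all exceptional divisors, i.e. a Du Val (rational double point) singularity — this is the standard characterization (see, e.g., \cite{koll} or the classification via minimal resolutions) and holds in arbitrary characteristic for Gorenstein normal surface singularities, because the notion of canonical = Gorenstein + rational in dimension two is characteristic-free. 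This gives that $(X,x)$ is a rational double point.

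For the converse, I would argue that a rational double point $(X,x)$ is a normal Gorenstein (indeed hypersurface) surface singularity which is canonical in the usual sense: this is again classical and characteristic-free. Applying Remark \ref{comin}(2) once more — $X$ normal and locally a complete intersection forces $\hke - j_E = k_E$ for every prime divisor $E$ over $X$, hence $\amj(E;X,\o_X) = a(E;X) \geq 1$ — we conclude $(X,x)$ is MJ-canonical.

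The only genuine content beyond bookkeeping is the equivalence "canonical surface singularity $\iff$ rational double point" in arbitrary characteristic, which I would simply cite rather than reprove; the Gorenstein plus rational characterization of Du Val singularities is standard and does not rely on resolution of singularities in a problematic way for surfaces, since resolutions of surfaces exist in all characteristics. The main (modest) obstacle is making sure that the classical equivalences are indeed quoted in their characteristic-free form; once that is granted, the proof is a two-line application of Proposition \ref{mjcano} and Remark \ref{comin}(2).
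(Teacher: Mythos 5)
Your proof is correct and follows essentially the same route as the paper: Proposition \ref{mjcano} reduces the MJ-canonical surface singularity to a normal hypersurface singularity, Remark \ref{comin}(2) identifies MJ-canonicity with usual canonicity in that situation, and the classical characteristic-free equivalence between canonical surface singularities and rational double points finishes both directions. The paper's proof is just a terser version of exactly this argument.
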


\begin{proof}
In case $\cha\ k=0$, this statement is proved in \cite{ei}.
The following is a characteristic free proof.
As a rational double point of dimension two is a normal hypersurface singularity, 
the canonicity in the usual sense is equivalent to  MJ-canonicity.
Therefore, a rational double point of dimension two is MJ-canonical.

Conversely, if $(X,x)$ is MJ-canonical, then, by Proposition \ref{mjcano},
it is a normal hypersurface singularity.
Therefore it is canonical in the usual sense, which yields that it is rational double.
\end{proof}

\begin{cor}
  If an MJ-canonical variety $X$ is locally a complete intersection, then $X$ is normal and
  has pseudo rational singularities.
  In particular, for $\cha\ k=0$, $X$ has rational singularities.

  \end{cor}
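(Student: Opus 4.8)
The plan is to combine the previous corollary on MJ-canonical complete intersections being normal hypersurface-like in codimension two with the fact that for a locally complete intersection variety the MJ-log discrepancy coincides with the usual log discrepancy (Remark~\ref{comin}(2) and Remark~\ref{mld}(iv)). Since $X$ is locally a complete intersection and MJ-canonical, these remarks tell us that $X$ is canonical in the usual sense, i.e.\ $\amj(E;X,\o_X)=a(E;X,\o_X)\ge 1$ for every exceptional prime divisor $E$ over $X$. So the task reduces to showing: a locally complete intersection variety with canonical singularities (in the usual sense) is normal and pseudo-rational. First I would establish normality: by Proposition~\ref{mjcano}, $X$ has normal hypersurface singularities in codimension~2, hence satisfies $R_1$; being locally a complete intersection, $X$ satisfies $S_2$ (indeed $S_k$ for all $k$, since complete intersections are Cohen--Macaulay), and Serre's criterion gives normality. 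This also disposes of condition~(3) of Definition~\ref{pseudo}, Cohen--Macaulayness, for free.

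The remaining point is condition~(2) of Definition~\ref{pseudo}: for every partial resolution $f\colon Y\to X$ one has $f_*\omega_Y=\omega_X$ near $x$. Since $X$ is normal and Gorenstein (a normal locally complete intersection is Gorenstein, so $\omega_X$ is invertible), and $X$ is canonical, we can argue as follows. For a partial resolution $f\colon Y\to X$ with $Y$ normal, the map $f^*(\wedge^d\Omega_X)\to\omega_Y$ factors (as $X$ is locally a complete intersection and Gorenstein) through $f^*\omega_X\to\omega_Y$, and the corresponding effective-or-not divisor is exactly the relative canonical divisor $K_{Y/X}$ on the smooth locus of $Y$; canonicity says this divisor is effective, i.e.\ $\omega_X\hookrightarrow f_*\omega_Y$ as subsheaves of the constant sheaf $k(X)\cdot\omega_X$. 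The reverse inclusion $f_*\omega_Y\subseteq\omega_X$ holds because $X$ is normal: a section of $\omega_Y$ pulls back a rational section of $\omega_X$ that is regular in codimension one on $X$, hence regular by $S_2$ plus normality (Hartogs/reflexivity of $\omega_X$). Combining the two inclusions gives $f_*\omega_Y=\omega_X$, which is condition~(2).

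With (1), (2), (3) all verified, $X$ has pseudo-rational singularities by Definition~\ref{pseudo}, proving the main assertion. The final sentence follows immediately: when $\cha\ k=0$, the comment after Definition~\ref{pseudo} (or the standard fact that Cohen--Macaulay pseudo-rational singularities are rational, using resolution of singularities) shows that the pseudo-rational singularities of $X$ are in fact rational.

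\textbf{Main obstacle.} I expect the genuinely delicate point to be the identification of the map $f^*(\wedge^d\Omega_X)\to\omega_Y$ with $f^*\omega_X\to\omega_Y$ and the translation of ``$\amj\ge 1$'' into ``$K_{Y/X}$ effective'' in the locally-complete-intersection case — this is where one must be careful that the Mather--Jacobian data reduces cleanly to the classical canonical divisor, invoking Remark~\ref{comin}(2) ($\mathrm{Im}(\wedge^d\Omega_X\to\omega_X)=\j_X\omega_X$, whence $\hke-j_E=k_E$). Once that reduction is in hand, the normality argument via Serre's criterion and the $f_*\omega_Y=\omega_X$ argument via reflexivity of $\omega_X$ are routine.
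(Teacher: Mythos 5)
Your proposal is correct and follows essentially the same route as the paper: normality via Proposition~\ref{mjcano} ($R_1$) plus Cohen--Macaulayness of complete intersections and Serre's criterion, and pseudo-rationality by using $\hke-j_E=k_E$ (Remark~\ref{comin}) to get $K_Y\geq f^*K_X$, hence $f_*\omega_Y\supseteq\omega_X$, with the reverse inclusion being the standard consequence of normality. The extra care you take with the reverse inclusion and with identifying the Mather--Jacobian data with the classical relative canonical divisor is just a fuller spelling-out of what the paper treats as immediate.
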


\begin{proof}
 As $X$ is locally a complete intersection, it is Gorenstein.
 By Proposition \ref{mjcano}, $X$ satisfies $R_1$, therefore by Serre's criteria,
 it is normal.
 Since $X$ is locally a complete intersection, we have that $ \hke-j_E=k_E$ (Remark \ref{comin})
 for every prime divisor $E$ over $X$.
 On the other hand, when we take a partial resolution $f:Y\to X$,
 for every prime divisor $E$ on $Y$, we obatain $\val_E(K_Y-f^*K_X)=k_E=\hke-j_E\geq 0$
by the assumption that $X$ is MJ-canonical.
Therefore, on $Y$, we have the inequality $K_Y\geq f^*K_X$ which yields
\begin{equation}\label{oppos}
f_*\omega_Y\supset f_*f^*\omega_X=\omega_X.
\end{equation}
Now, we obtain
  $f_*\omega_Y=\omega_X$, since the opposite inclusion of (\ref{oppos}) is trivial.

\end{proof}

\begin{defn}
Let $x\in X$ be a closed point of a variety $X$.
We say that $x$ is a normal crossing double point in $X$
if $\widehat{\o_{X,x}}=k[[x_1,\ldots, x_N]]/(x_1\cdot x_2).$

\end{defn}

\begin{prop}
\label{mjlogcodim1} Let a variety $X$ have at worst MJ-log-canonical singularities.
Then a general point of the singular locus of codimension one is  normal crossing double.
\end{prop}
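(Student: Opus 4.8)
The plan is to combine the inversion of adjunction (Theorem~\ref{ia}) with a short chain of blow-ups centred on the closure $\overline Z$ of the given codimension-one component $Z$ of $\sing X$, tracking log discrepancies.

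First I would reduce to a hypersurface double point. Let $\eta$ be the generic point of $Z$ and $x\in Z$ a general closed point, so $Z$ is smooth at $x$; embed $X$ minimally in a smooth variety $A$ near $x$ and put $c=\codim(X,A)\ge 1$ (since $x\in\sing X$). Because $X$ is MJ-log-canonical, Theorem~\ref{ia} and Remark~\ref{mld} give $\mldmj(\eta;X)=\mld(\eta;A,I_X^{c})\ge 0$. Minimality of the embedding at the general point $x$ together with $\overline Z\subset\sing X$ forces $\ord_{\overline Z}(I_X)\ge 2$ (a generator of $I_X$ of order one along $\overline Z$ would, at the general $x$, have nonzero differential in the normal directions of $\overline Z$, so eliminating it would drop $\emb(X,x)$ below $d+c$). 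Blowing up $A$ along $\overline Z$, the exceptional component $E_1$ over $\overline Z$ satisfies
$$
k_{E_1}+1-c\cdot\val_{E_1}(I_X)=\codim(\overline Z,A)-c\cdot\ord_{\overline Z}(I_X)\le (c+1)-2c=1-c ,
$$
and this is $\ge\mld(\eta;A,I_X^{c})\ge 0$; hence $c=1$, all the inequalities are equalities, and $\ord_{\overline Z}(I_X)=2$. So, locally near $x$, $X=V(f)$ is a reduced hypersurface in $\AA^{d+1}$ with $\mult_{\overline Z}(f)=2$, one has $\mld(\eta;\AA^{d+1},(f))=\mldmj(\eta;X)\ge 0$ (Theorem~\ref{ia} again), and the blow-up of $\overline Z$ gives $k_{E_1}+1-\val_{E_1}(f)=0$.

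Next I would show the transverse tangent cone of $f$ along $\overline Z$ is separable. Writing $\overline Z=\{x_1=x_2=0\}$ and $f=a x_1^{2}+b x_1x_2+c x_2^{2}$, the transverse tangent cone at $z\in\overline Z$ is the nonzero binary quadratic $q_z=a(z)x_1^{2}+b(z)x_1x_2+c(z)x_2^{2}$; either $q_z$ has two distinct roots in $\PP^1$ for general $z$, or $q_z$ is a square of a linear form for every $z\in\overline Z$. I claim the latter is impossible. In that case, letting $Y_1\to\AA^{d+1}$ be the blow-up of $\overline Z$, the strict transform $\widetilde f$ meets $E_1$ in a nonreduced subscheme whose support is a smooth section $W_1$ of the $\PP^1$-bundle $E_1\to\overline Z$; thus $W_1\subset\widetilde f$ and $\widetilde f$ is tangent to $E_1$ along $W_1$. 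Since $\pi_1^{*}f=\widetilde f+2E_1$ and $K_{Y_1/\AA^{d+1}}=E_1$, blowing up $W_1$ gives a divisor $E_2$ with $k_{E_2}+1-\val_{E_2}(f)=2-\mult_{W_1}(\widetilde f)-1$, which is $\le-1$ if $\mult_{W_1}(\widetilde f)\ge 2$ --- contradicting the previous step. If $\mult_{W_1}(\widetilde f)=1$, then on the blow-up $Y_2$ the strict transforms of $\widetilde f$ and of $E_1$ meet the new exceptional $E_2$ along one common smooth section $W_2$ (their shared tangent direction); computing $K_{Y_2/\AA^{d+1}}=\widetilde E_1+2E_2$ and $\pi_2^{*}(\widetilde f+2E_1)=\widetilde f_2+2\widetilde E_1+3E_2$, the blow-up of $W_2$ yields a divisor $E_3$ with
$$
k_{E_3}+1-\val_{E_3}(f)=2-\mult_{W_2}(\widetilde f_2)-\mult_{W_2}(\widetilde E_1)-\mult_{W_2}(E_2)=-1<0 ,
$$
again a contradiction (all three divisors are centred on $\overline Z$, so their log discrepancies are $\ge\mld(\eta;\AA^{d+1},(f))\ge 0$). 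Hence $q_x$ is separable.

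Finally I would put $f$ in normal form over $R:=\widehat{\o}_{\AA^{d+1},x}=k[[x_1,\dots,x_{d+1}]]$. After a linear change of $x_1,x_2$ (using $q_x\neq 0$) we may assume $a\in R^{\times}$; separability of $q_x$ means $T^{2}+(b/a)T+(c/a)\in R[T]$ is monic with separable reduction modulo $\mathfrak m_R$, so it factors over the Henselian ring $R$ as $(T-r_1)(T-r_2)$ with $r_1-r_2\in R^{\times}$. Then $y_i:=x_1-r_ix_2$ $(i=1,2)$ complete to a regular system of parameters of $R$ and $f=a\,y_1y_2$, whence
$$
\widehat{\o}_{X,x}=R/(f)=k[[y_1,y_2,x_3,\dots,x_{d+1}]]/(y_1y_2) ,
$$
i.e. $x$ is a normal crossing double point, as claimed. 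The main obstacle is keeping the last two paragraphs characteristic-free: in characteristic $2$ one cannot complete the square or invoke the analytic classification of plane-curve singularities, which is why the inseparable case is excluded by explicit discrepancy bookkeeping and the normal form comes from a Henselian factorization rather than from killing higher-order terms; a secondary delicate point is the inequality $\ord_{\overline Z}(I_X)\ge 2$, which uses both minimality of the embedding and the codimension-one hypothesis on $\sing X$.
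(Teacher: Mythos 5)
Your proof is correct and follows essentially the same route as the paper's: embed minimally near the generic point $\eta$ of the codimension-one singular stratum, use inversion of adjunction, blow up $\overline{Z}$ so that $a(E_1;A,I_X^c)\ge 0$ together with $k_{E_1}=c$ and $\ord_{\overline{Z}}(I_X)\ge 2$ forces $c=1$, $\ord_{\overline{Z}}(I_X)=2$ and $a(E_1)=0$, and then kill the tangential case (non-reduced intersection of the strict transform with $E_1$) by two further blow-ups producing a divisor of log discrepancy $\le -1$. The one place you go beyond the paper is the closing implication --- that a reduced (fibrewise separable) intersection of $X_1$ with $E_1$ forces a normal crossing double point --- which the paper simply asserts; your Henselian factorization of the transverse binary quadric supplies exactly the missing characteristic-free justification, so this is a welcome refinement rather than a different method.
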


\begin{proof}  Since the problem is local, we may assume that $X$ is a closed subvariety  in a smooth 
affine variety $A$.
  Then, as $X$ is MJ-log-canonical, it follows that $\mldmj(\eta; X)=\mld(\eta, A, I_X^c)\geq 0,
  $ for every (not necessarily closed) point $\eta\in X$,
  where $I_X$ is the defining ideal of $X $ in $A$ and $c=\codim (X, A)$.
  It seems to be well known that if $(A, I_X^c)$ is log-canonical, then
  $X$ is a hypersurface with at worst normal crossing double singularities in codimension 1
  when the base field is of characteristic 0.
  We will write the proof that works  for  an arbitrary characteristic case.

 Let $S\subset X$ be  an irreducible component of codimension 1 in the singular locus and
let $\eta\in S$ be the generic point.
Take $A$ as a minimal dimensional smooth ambient space around $\eta$.
Let $f:A_1\to A$ be the blow-up of $A$ with the center $S$ and let $E_1$ be the exceptional
divisor dominating $S$.
Then, we obtain
$$a(E_1;A, I_X^c)=k_{E_1}-c\cdot\val_{E_1} (I_X)+1\geq 0.$$
Here, we note that $k_{E_1}=c$. 
By the minimality of $\dim A$, every element  of $I_X$ has multiplicity $\geq 2$ at $\eta$,
which yields 
$\val_E(I_X)\geq 2$.
Then, we obtain
$$0\leq k_{E_1}-c\cdot\val_{E_1} (I_X)+1\leq c-2c+1=-c+1\leq 0.$$
Therefore all equalities should hold, in particular, $a(E_1;A, I_X^c)=0$, $c=1$ and
$\val_ {E_1}(I_X)=2$.
These mean that $X$ has hypersurface double points in codimension 1.

If a general point of $S$ is  not a normal crossing double point, then $E_1$ and the proper transform
$X_1$ contact with the order $\geq 2$ along a closed subset $S_1$ dominating $S$.
Let $A_2\to A_1$ be the blow-up with the center $S_1$ and $E_2$ the exceptional divisor
dominating $S_1$.
Let  $E_1'\subset A_2$ and $X_2\subset A_2$ be the proper transforms  of $E_1$ and $X_1$, respectively.

Then, three divisors $E_2$,  $E_1'$ and  $X_2$ in $A_2$ 
still have an intersection at a closed subset $S_2$ of dimension $d-1$.
Now, blow up $A_3\to A_2$ with the center $S_2$ and let $E_3$ be the exceptional
divisor dominating $S_2$.
Then, the log discrepancy at $E_3$ is
$$a({E_3}; A, I_X)=k_{E_3}-\val_{E_3}I_X+1\leq 3-5+1=-1,$$
a contradiction.
Therefore, $E_1$ and $X_1$ have the reduced intersection at $S_1$ over general points of $S$,
which implies that $X$ is normal crossing double at a general point of $S$.
\end{proof}

\begin{thm}\label{isiireguera} Let $k$ be an algebraically closed field.
Let $p\in X$ be a closed point of an arbitrary variety $X$ over $k$.
 A pair $(X, B)$ consisting of   $X$
  and an effective $\RR$-Cartier divisor $B$ on $X$ satisfies
 \begin{equation}\label{futousiki}
 \dim X-1\leq {\mldmj}(p;X,{}B)
 \end{equation}
if and only if
either
\begin{enumerate}
\item[(i)]   $\dim X\geq 2$, $B=0$ and $(X,p) $ is a compound Du Val singularity,
\item[(ii)]  $B=0$, and $(X,p)\subset (\AA^3, 0)$  is given by:
$$
\begin{array}{ll}
& xy=0, \\
& z^2+x y^2=0 \text{ if } \cha\ k \neq 2, \text{ or } \\
& z^2+x y^2 + yz g(x,y)=0 \text{ if } \cha\ k = 2, \text{ where } mult g \geq 1 \text{ and either } g=0 \text{ or
} g(x,0) \neq 0.
\end{array}
$$
\item[(iii)] $(X,p)$ is non-sigular and $0\leq \mult_pB\leq 1$.
\end{enumerate}

\noindent
In  cases $(\mathrm i)$ and $(\mathrm {ii})$, we have ${\mldmj}(p;X,{{\mathcal J}_X})= \dim X -1$ and
 in  case $(\mathrm {iii})$ we have
${\mldmj}(p;X,{}B)=\mld(p; X, B)= \dim X -\mult_pB$ and the minimal  log discrepancy is computed by the exceptional divisor of the first blow-up at $p$.
\end{thm}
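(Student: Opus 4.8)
The plan is to reduce the inequality (\ref{futousiki}) to a computation on a smooth ambient space via inversion of adjunction, and then to extract the classification from the behaviour of the blow-up of $p$ together with the structure theorems for MJ-(log-)canonical singularities. Put $d=\dim X$, work in a neighbourhood of $p$, and embed $X$ as a closed subscheme of a smooth variety $A$ of \emph{minimal} dimension, so that $\dim A=\emb(X,p)=d+c$ with $c=\codim(X,A)$. Writing the effective $\RR$-Cartier divisor $B$ locally as a combination of principal ideals, Theorem \ref{ia} together with Remark \ref{extended} turns the condition (\ref{futousiki}) into
\[
\mld(p;A,\mathfrak b\cdot I_X^{c})\ \geq\ d-1 ,
\]
where $\mathfrak b$ is the ideal-combination representing $B$ and $I_X$ is the ideal of $X$ in $A$ (for $B=0$ this reads $\mld(p;A,I_X^{c})\ge d-1$). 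I would then distinguish the cases $c=0$ and $c\ge1$.

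If $c=0$, i.e.\ $X$ is smooth at $p$, then $\mldmj(p;X,B)=\mld(p;X,B)$ by Remark \ref{mld}(iv). The blow-up $E$ of $p$ gives $a(E;X,B)=d-\mult_p B$, so (\ref{futousiki}) forces $\mult_p B\le1$, which is case (iii); conversely, when $\mult_p B\le1$ the first blow-up of $p$ already computes the minimal log discrepancy --- a standard characteristic-free fact, which in any case follows from the jet-scheme description in Theorem \ref{formula} --- so $\mld(p;X,B)=d-\mult_p B\ge d-1$, giving both the equivalence and the asserted value. If $c\ge1$, then minimality of $A$ forces every element of $I_X$ to vanish to order $\ge2$ at $p$, hence $\val_E(I_X)=\mult_p I_X\ge2$ for the blow-up $E$ of $p$ in $A$; since $a(E;A,\mathfrak b I_X^{c})=(d+c)-\mult_p B-c\cdot\mult_p I_X\le d-c-\mult_p B$, the inequality (\ref{futousiki}) forces $c=1$, $\mult_p B=0$ and $\mult_p I_X=2$. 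So $B=0$ near $p$ and $X=\{f=0\}\subset\AA^{d+1}$ is a hypersurface double point, and --- the case $d=1$ being treated directly --- it remains to classify those $f$ with $\mld(p;\AA^{d+1},(f))\ge d-1$.

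For this classification I would first observe that $\mld(p;X)\ge d-1$ together with the monotonicity $\mldmj(\eta_V;X)\le\mldmj(\eta_W;X)+\codim(V,W)$ of Corollary \ref{very}(ii) gives $\mldmj(\eta;X)\ge0$ for every $\eta$ with $p\in\overline{\{\eta\}}$, so $X$ is MJ-log-canonical at $p$ and Proposition \ref{mjlogcodim1} applies. If $X$ is normal at $p$ it is then a normal hypersurface singularity, and $\mld(p;X)\ge\dim X-1$ places it among the canonical hypersurface singularities, i.e.\ rational double points if $d=2$ and compound Du Val singularities if $d\ge3$; the inductive reduction to surface sections is carried out not by Bertini but through the characteristic-free inversion-of-adjunction and jet-scheme tools of this paper (in the spirit of Corollary \ref{hyper}). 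This is case (i), with $\mldmj(p;X)$ equal to the value $d-1$ coming from the blow-up of $p$. If $X$ is \emph{not} normal at $p$, then $\sing X$ has a codimension-one component through $p$ along whose generic point $X$ is a normal crossing double point (Proposition \ref{mjlogcodim1}), so the tangent cone of $f$ is of node type along that locus; analysing the blow-up of $p$, and if necessary the further blow-ups along that codimension-one locus and its strict transforms, then shows that $\mld(p;\AA^{d+1},(f))\ge d-1$ forces $d=2$ and, after putting $f$ in normal form, that $(X,p)\subset(\AA^3,0)$ is one of $xy=0$, or $z^2+xy^2=0$ when $\cha k\neq2$, or $z^2+xy^2+yzg(x,y)=0$ when $\cha k=2$ under the stated conditions on $g$ --- this is case (ii). Finally, the converse implications and the explicit values of $\mldmj$ in cases (i), (ii), (iii) are obtained by exhibiting the blow-up of $p$ as the valuation computing $\mldmj(p;X,B)$ and checking that no valuation does better --- via normality in case (i), the explicit $\AA^3$-computation in case (ii), and the smooth-ambient argument in case (iii).

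I expect the real obstacle to be the non-normal subcase: both ruling out $d\ge3$ and pinning down the normal forms require a delicate, characteristic-sensitive study of iterated blow-ups of $f$, and the extra term $yzg(x,y)$ is a clear sign that characteristic $2$ behaves differently and is the subtle point. A secondary difficulty is the arbitrary-characteristic justification of ``canonical hypersurface singularity $\Rightarrow$ compound Du Val'' in dimension $\ge3$, where the classical general-hyperplane-section argument is unavailable and must be replaced by the jet-scheme description of MJ-singularities.
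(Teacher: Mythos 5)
Your reduction to a hypersurface double point is correct and in fact a little slicker than the paper's: where the paper quotes the jet-scheme dimension count (R1) of \cite{ir} (resting on Corollary \ref{useful}), you obtain multiplicity $2$, $c=1$, $B=0$, and case (iii) all at once from Theorem \ref{ia} and the discrepancy of the first blow-up of the minimal ambient space. The genuine gap is that the classification of hypersurface double points with $\mldmj(p;X)\geq d-1$ --- which is the entire content of (i) and (ii) --- is left as a plan, and the plan is not the one that works. The paper does not run an iterated-blow-up analysis and does not prove ``canonical hypersurface $\Rightarrow$ compound Du Val'' by a hyperplane-section argument. It imports two specific inputs: first, the finer jet-scheme constraints (R2) and (R3) of \cite{ir}, which beyond multiplicity $2$ control Hironaka's invariant $\tau$ and force $\mult g=3$ (resp.\ the conditions on $g_3,g_4$) when $\tau=1$ --- these are dimension counts on $X_m$ for small $m$ via Corollary \ref{useful}, hence characteristic-free; second, Lipman's classification over any field (\cite{Li}, pp.~256--268) of pseudo-rational double points in terms of $\tau$ and the factorization of the initial form of $G$. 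Restricting (R1)--(R3) to a general linear surface cut $(X',p)$ --- an elementary manipulation of the equation, needing no Bertini and no resolution --- places $(X',p)$ in Lipman's case list; the pseudo-rational branches give rational double points, hence (i), and the two non-pseudo-rational branches (Cases II and IV) give exactly the normal forms in (ii), including the characteristic-$2$ term $yzg(x,y)$. Without Lipman's dichotomy, or a substitute for it, your argument has no mechanism for producing the list.

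Two further concrete problems. Appealing to arguments ``in the spirit of Corollary \ref{hyper}'' for the reduction to surface sections is circular: Corollary \ref{hyper} rests on Proposition \ref{cdv}, which rests on Remark \ref{bound}, which is extracted from the proof of the present theorem. And in the non-normal branch, the claim that the blow-up analysis ``forces $d=2$'' is not something a discrepancy computation will deliver: for example $\mld\bigl(0;\AA^{4},(xy)\bigr)=2$, so the non-normal threefold $\{xy=0\}\subset\AA^{4}$ already satisfies $\mldmj(p;X)=\dim X-1$. What the paper's proof actually pins down is the general \emph{surface section} of $(X,p)$, via Lipman's pseudo-rationality dichotomy; no bound on $d$ comes out of the non-normal case by itself.
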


\begin{proof} As in \cite[Theorem 4.1]{ir}, it suffices to show that equality in (\ref{futousiki}) holds if and only if (i) or (ii) are satisfied. Germs of varieties $(X,p)$, where $p$ is a closed point, satisfying $\mldmj(p,X)= \dim X -1$ are called top singularities in \cite{ir}. \\

Recall the invariant $\tau(X, p)$ introduced in
\cite{Hi1}. In the case of a germ of an hypersurface $(X,p)$ in $(\AA^{d+1}_k, 0)$ defined by $f \in k[x_1, \ldots, x_{d+1}]$,
$\tau$ is the smallest possible dimension of a linear subspace $V_0$
of $V=kx_1+kx_2+\cdots+kx_{d+1}$ such that the initial term $\mbox{in} f$ of $f$ lies in the subalgebra
$k[V_0]$ of $k[x_1,\ldots, x_{d+1}]$. It is known to be an invariant of $(X,p)$.  \\

The following {\it results}, proved in \cite{ir} if $\text{char } k=0$, are based on equality (\ref{basicequation}), hence, remain true if   $\text{char } k>0$:
\begin{enumerate}
\item[(R0)] \cite[Lemma 3.5]{ir} Let $X$ be a $d$-dimensional variety and let $X' \subset X$ be a
$(d-c)$-dimensional subvariety, which is defined as the zero locus
of $c$ elements of ${\cal O}_X$. Let $p$ be a closed point in
$X'$. If $(X', p)$ is a top singularity,  then $(X, p)$ is a
top singularity. \\
\item[(R1)] \cite[Lemma 3.6]{ir} If $X$ has a top singularity at $p$, then $X$ has a
hypersurface singularity of multiplicity $2$ at $p$.
\item[(R2)] \cite[Lemma 3.20]{ir} Let $(X, p)\subset (\AA^{d+1}_k, 0)$ be a germ of a hypersurface singularity  
 of multiplicity $2$ at $p$  and $\tau(X, p)
=1$. Hence, the equation of $(X, p)\subset (\AA^{d+1}, 0)$
is expressed as 
$$f=x_{1}^2 -g(x_2, \ldots, x_{d+1}) + x_1g'(x_1,\ldots, x_{d+1})=0.$$
Here, we note that
 $x_1g'=x_{1}^2 -g(x_2, \ldots, x_{d+1}) \in x_{1} (x_1, \ldots, x_{d+1})^2$. Then, if $(X, p)$ is a top singularity, we have $d \geq
2$ and $\text{mult } g =3$.\\
\item[(R3)] \cite[Proposition 3.23]{ir}  In the conditions of (R2), suppose also that the initial form of $g$ has only one factor.  Hence, the equation of $(X, p)\subset (\AA^{d+1},0)$
gives
   $$
   \begin{array}{l}
x_1^2 \ + \ x_2^3 + g_3(x_3, \ldots,
x_{d+1}) \ x_2 \ + \ g_4(x_3, \ldots, x_{d+1})
\  \\
\ \ \ \ \ \ \ \ \ \ \in x_1 (x_3, \ldots , x_{d+1})^2 + x_1 x_2 (x_3, \ldots , x_{d+1}) + (x_1x_2^2) + x_2^2 (x_3, \ldots , x_{d+1})^2
\end{array}
$$
where $\text{mult} \ g_i
\geq i$, for $i=3,4$.  Then, if $(X, p)$ is a top singularity, we have either $\text{mult} \ g_3=3$ or $4 \leq
\text{mult} \ g_4 \leq 5$. \\

\end{enumerate}

On the other hand, in  pages 256--268 of \cite{Li}, a characterization of pseudo rational double points over any field is given.

Restricting to an algebraically closed field k, the following is proved:
Let us assume the {\it hypothesis} H1:
\begin{enumerate}
\item[H1)] $X \subset \AA_k^3$ be a surface of multiplicity $2$.
Then $\tau \leq 3$ and we
have:
\end{enumerate}
\begin{enumerate}

\item[
{\bf Case I}]: $\tau=3$ if and only if $(X,p)\subset (\AA^3, 0)$ has $A_1$-singularity.

\item[
{\bf Case II}] (II a) in \cite{Li}): $\tau=2$ is equivalent to $(X,p)\subset (\AA^3, 0)$ having $A_n$-singularity $(n \geq 2)$ if we assume that $(X,p)$ is pseudo rational.

\end{enumerate}
If $\tau=1$,  the equation of $X$ gives $z^2-G(x,y) \in z M^2$ with $\text{mult } G \geq 3$, where $M$ is the maximal ideal at the origin.
Besides, the hypothesis $X$ pseudo rational implies that

\begin{enumerate}
\item[
H2)] $\text{mult } G=3$.
\end{enumerate}
Let $\overline G$ be the initial form of $G$. Then one of the following cases
occurs:

\begin{enumerate}
\item[
{\bf Case III}] (III c in \cite{Li}): $\tau=1$ and $\overline G$ has $3$ factors. This is equivalent to $X$ being a $D_4$-singularity

\item[{\bf Case IV}]: $\tau=1$ and  $\overline G$ is the product of a linear factor and the square of another factor. This is equivalent to $(X, p)\subset (\AA^3, 0)$ having a $D_n$-singularity $(n \geq 5)$ if we assume that $(X, p)$ is pseudo rational.

\item[{\bf Case V}]: $\tau=1$ and  $\overline G$ has only $1$ factor. Then, either $(X, p)$ is an $E_6$-singularity or it can be expressed as
  $$z^2 + y^3 + \rho x^3 y + \sigma x^5 \in (zxy, zy^2, x^2 y^2, x^3z).$$
\end{enumerate}
Again $(X, p)$ pseudo rational implies:
\begin{enumerate}

\item[
H3)] either $\rho$ is a unit or $\sigma$ is a unit.

\end{enumerate}

Finally, we have 

\begin{enumerate}
\item[
{\bf Case V c}]: $\rho$ being a unit is equivalent to $(X, p)$ being an $E_7$-singularity.
\item[
{\bf Case V d}]: $\rho$ not a unit and $\sigma$ a unit, is equivalent to $(X, p)$ being an $E_8$-singularity.\\

\end{enumerate}

Note that rational double points are top singularities (apply the proof for $\text{char } k =0$ given in \cite{ir}, example 3.12).
From this, applying (R0), it follows that (i) in the theorem implies equality in (16). The fact that (ii) implies equality in (16) can also be checked. In fact, for the last case in (ii), note that the surface given by $z^2+x y^2 + yz g(x,y)=0$  ($\text{char } k = 2$), where $\mult g\geq 1$ and either $g=0$ or $g(x,0)=0$,
can be desingularized by the blow-up of $(y,z)$; if $E$ is the exceptional curve appearing then $\widehat{k}_E=1$ and $\text{ord}_E(\j_X)=1$.\\

Finally, let us prove that if $(X, p)$ is a top singularity, then either (i) or (ii) hold. In fact, let $(X', p)$ be the surface obtained by a general cut of $(X, p)$ with $d-2$ hyperplanes. Since (R1) holds for $(X, p)$, we have that (H1) holds for $(X', p)$. Analogously, since (R2) holds for  $(X, p)$, it follows that for $(X', p)$ we have that  if $\tau=1$ then (H2) holds. Finally, (R3) for $(X, p)$ implies that if $(X', p)$ satisfies the hypothesis of Case V,  either  $(X', p)$ is an $E_6$-singularity or (H3) holds.

We thus  conclude that if we assume that $(X', p)$ is pseudo rational in Cases II and  IV, then $(X, p)$ being a top singularity would imply that $(X', p)$ is a rational double point; hence, $(X, p)$ is a compound Du Val singularity.

Finally, for cases II and IV without the hypothesis of pseudo rational, we obtain
\begin{enumerate}
\item[1.] If $(X', p)$ is in case II and not pseudo rational, then it can be expressed as $xy=0$.

\item[2.] If $(X', p)$ is in case IV and not pseudo rational, then it can be expressed as:
$$
\begin{array}{ll}
& z^2+x y^2=0 \text{ if } \cha\ k \neq 2, \text{ or } \\
& z^2+x y^2 + yz g(x,y)=0 \text{ if } \cha\ k = 2, \\
&\text{ where } \mult g \geq 1 \text{ and either } g=0 \text{ or
} g(x,0) \neq 0.
\end{array}
$$
\end{enumerate}
Thus, we conclude the result.

\end{proof}

\begin{rem}\label{bound}
For a closed point $x\in X$ in a variety $X$.
Define $$s_m(x):=(m+1)d-\dim \pi_m^{-1}(x).$$
We know that $\mldmj(x; X)=\inf_{m\in \NN} s_m(x)$ by the formula
 (\ref{basicequation}) in
Corollary \ref{useful}.
  The proof of the previous theorem shows that  $s_m\geq d-1$ ($m\leq 5$) yields that $\mldmj(x; X)\geq
  d-1$.
\end{rem}

\begin{prop}\label{cdv}
Let $X$ be a  variety over an algebraically closed field $k$.
Assume that $X$ has at worst MJ-canonical singularities.
Then,
$X$ has at worst cDV singularities in codimension 2.
\end{prop}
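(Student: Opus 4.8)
The plan is to pass to the generic point $\eta$ of an arbitrary codimension-two irreducible component $Z$ of $\sing(X)$, to show that a sufficiently general closed point of $Z$ is a ``top singularity'' in the sense of Theorem~\ref{isiireguera}, and then to read off from the classification in that theorem that the singularity is compound Du Val.

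First I would record that MJ-canonicity forces $\sing(X)$ to have codimension at least $2$: if $\xi$ were the generic point of a codimension-one component of $\sing(X)$, then $1\le\mldmj(\xi;X)\le\codim(\overline{\{\xi\}},X)=1$ by Corollary~\ref{very}(i), so equality would hold there, whence $X$ would be regular at $\xi$ --- a contradiction (cf.\ the proof of Proposition~\ref{mjcano}). So fix a codimension-two component $Z$ of $\sing(X)$, let $\eta$ be its generic point, and note $\mldmj(\eta;X)\ge 1$ by MJ-canonicity; by Corollary~\ref{very}(i) also $\mldmj(\eta;X)\le\codim(Z,X)=2$, so it is finite.

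Next I would invoke the equality clause of Corollary~\ref{very}(ii): for a very general closed point $x\in Z$ (general if $\cha k=0$, by Corollary~\ref{very}(iii)),
\[
\mldmj(x;X)=\mldmj(\eta;X)+\codim(\{x\},Z)\ge 1+(d-2)=d-1,
\]
where $d=\dim X$. Hence $\dim X-1\le\mldmj(x;X)$, so Theorem~\ref{isiireguera} applies with $B=0$ and we are in one of the cases (i), (ii), (iii). Case (iii) is excluded because $x\in\sing(X)$. Case (ii) presents $(X,x)$ as a surface in $\AA^3$ (so it can occur only when $d=2$) which is singular along a curve; but for $d=2$ the fact that $\sing(X)$ has codimension $\ge 2$ means $\sing(X)$ is finite, a contradiction. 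Therefore case (i) holds: $(X,x)$ is a compound Du Val singularity. Running this over all codimension-two components $Z$ of $\sing(X)$, a very general (general, if $\cha k=0$) closed point of the codimension-two part of $\sing(X)$ is a cDV point, i.e.\ $X$ has at worst cDV singularities in codimension $2$. Note that this uses only the codimension-$\ge 2$ consequence of MJ-canonicity, not the full hypersurface statement of Proposition~\ref{mjcano}.

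The argument is entirely characteristic-free, since Corollary~\ref{very} and Theorem~\ref{isiireguera} hold in arbitrary characteristic. I expect the only delicate points to be bookkeeping ones: making the exclusion of case (ii) of Theorem~\ref{isiireguera} watertight (this is precisely where the codimension-$\ge 2$ property of $\sing(X)$ enters, and one should also note that for $d\ge 3$ case (ii) is inapplicable for dimension reasons), and reconciling ``very general closed point of $Z$'' with the phrase ``in codimension~$2$'' --- the latter I would handle by reading it as a statement about general points of $\sing(X)$, in parallel with Proposition~\ref{mjlogcodim1} (alternatively, a specialization argument shows the transverse slice at $\eta$ itself is a rational double point). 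When the base field is countable one may first extend scalars to an uncountable algebraically closed field, which changes neither the hypothesis nor the conclusion.
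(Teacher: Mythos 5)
Your overall strategy matches the paper's: pass to the generic point $\eta$ of a codimension-two component $Z$ of $\sing(X)$, transfer the bound $\mldmj(\eta;X)\ge 1$ down to closed points of $Z$, and read off ``compound Du Val'' from the classification in Theorem \ref{isiireguera} (your exclusion of cases (ii) and (iii) is fine). The gap is in the transfer step. You invoke the equality clause of Corollary \ref{very}(ii), which holds only for \emph{very general} closed points of $Z$, i.e.\ outside a countable union $\bigcup_{m,n}F_{m,n}$ of proper closed subsets. A countable union of proper closed subsets of $Z$ need not be contained in any single proper closed subset (over $\overline{\QQ}$ it can exhaust all closed points of $Z$, and even over $\CC$ it can be Zariski dense), so what you actually prove is that the non-cDV locus of $Z$ lies in such a countable union. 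That is strictly weaker than the assertion ``cDV in codimension $2$,'' which requires the non-cDV locus to be contained in a \emph{closed} subset of codimension at least $3$; the weaker statement would also break the application in Corollary \ref{hyper}, whose proof uses that the non-cDV singular points form a closed set of dimension $0$. Your proposed remedies do not close this: extending scalars to an uncountable field only guarantees that the very general locus is nonempty, appealing to Corollary \ref{very}(iii) reintroduces resolution of singularities and so only covers $\cha\ k=0$, and ``reading the statement as being about general points'' is precisely what remains to be proved.

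The paper closes exactly this gap with Remark \ref{bound}: the proof of Theorem \ref{isiireguera} shows that the single inequality $\mldmj(x;X)\ge d-1$ is already implied by the finitely many conditions $s_m(x)=(m+1)d-r_m\ge d-1$ for $m\le 5$. From $\mldmj(\eta;X)\ge 1$ and Corollary \ref{useful} one gets, for each fixed $m$, that $s_m(x)\ge d-1$ holds at a \emph{general} closed point $x\in Z$ (each fixed $m$ contributes one constructible condition, so only the finitely many exceptional closed sets with $m\le 5$ need to be avoided), and Remark \ref{bound} then gives $\mldmj(x;X)\ge d-1$ on a dense open subset of $Z$. Substituting this for your use of Corollary \ref{very}(ii), the rest of your argument goes through in arbitrary characteristic and yields the proposition as stated.
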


\begin{proof}
In Proposition \ref{mjcano}, it  is proved that $X$ has normal in codimension 2.
Let $\eta\in X$ be the generic point of an irreducible component of the
singular locus of codimension 2.
As $X$ is MJ-canonical, it follows $\mldmj(\eta; X)\geq 1$; therefore, by the
Corollary \ref{useful}
$$
1\leq\mldmj(\eta; X)
= \inf_m\left\{(m+1)d-(\dim  \overline{\{\eta\}}+r_m)\right\},$$
where $r_m=\dim \pi_m^{-1}(x)$ ($m\in \NN$) for a general closed point
$x\in \overline{\{\eta\}}$.
Considering the cases for $m=1,\ldots, 5$, we obtain  
$$s_m(x):=(m+1)d-r_m\geq d-1$$
 for a general point
$x\in \overline{\{\eta\}}$.
Therefore, by Remark \ref{bound},  we obtain that $\mldmj(x;X)=d-1$ at a general point $x
\in \overline{\{\eta\}}$.

\end{proof}

\begin{cor}\label{hyper}
Let $X$ be an MJ-canonical  quasi-projective variety of dimension 3 over an algebraically
closed field $k$.
Then,
a general hyperplane section $H$ of $X$ has at worst Du Val singularities.
\end{cor}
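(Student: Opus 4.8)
The plan is to combine Proposition~\ref{cdv} with standard characteristic-free dimension counts for incidence varieties of hyperplanes, so that neither a resolution of singularities nor a generic smoothness statement of Bertini type enters. Fix the given embedding $X\subset\PP^N$; if $X$ is only quasi-projective we keep it as a locally closed subvariety and, where irreducibility of a section is needed, pass momentarily to a projective closure (the relevant dimension counts are insensitive to this). We may assume $N\ge 4$, since otherwise $X$ is open in $\PP^N$, hence smooth, and the statement is trivial. Since $X$ is MJ-canonical of dimension $3$, Proposition~\ref{mjcano} gives that $X$ is regular in codimension $1$, so $\dim\sing X\le 1$; write $\Gamma$ for the one-dimensional part of $\sing X$, so that $\sing X\setminus\Gamma$ is finite. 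By Proposition~\ref{cdv} there is a dense open $\Gamma^\circ\subseteq\Gamma$ such that $(X,p)$ is a compound Du Val singularity for every closed point $p\in\Gamma^\circ$; in particular $\sing X\setminus\Gamma^\circ$ is finite.

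First I would check that a general hyperplane $L$ yields a section $H:=X\cap L$ which is a reduced, irreducible surface, smooth away from the finite set $\Gamma\cap L\subseteq\Gamma^\circ$. That $H$ has pure dimension $2$ and is irreducible for general $L$ is Bertini's irreducibility theorem, valid in every characteristic; reducedness then follows once $H$ is known to be generically smooth. For the smoothness of $H$ along $X_{reg}$, consider
\[
I=\{(x,L)\in X_{reg}\times(\PP^N)^\vee : \widehat{T_xX}\subseteq L\},
\]
where $\widehat{T_xX}$ is the embedded projective tangent space; each fibre over $x$ is a linear $\PP^{N-4}$, so $\dim I=N-1$ and the image of $I$ in $(\PP^N)^\vee$ is not dense. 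A general $L$ avoids the closure of this image, so $\widehat{T_xX}\not\subseteq L$ for every $x\in X_{reg}\cap L$, i.e.\ $H$ is smooth at all such $x$. Finally a general $L$ avoids the finite set $\sing X\setminus\Gamma^\circ$, so $\sing H\subseteq\Gamma\cap L\subseteq\Gamma^\circ$, and this set is finite because $\dim\Gamma\le 1$. I emphasize that this step uses only that $X\hookrightarrow\PP^N$ is an immersion, which is why the count goes through in positive characteristic.

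It remains to see that $H$ is Du Val at each $p\in\Gamma\cap L$, and here I would use the defining property of a compound Du Val point. For $p\in\Gamma^\circ$ the locus of hyperplanes $L'$ through $p$ with $(X\cap L',p)$ not Du Val is a proper closed subset of $\{L': p\in L'\}\cong\PP^{N-1}$: restricting a linear form on $\PP^N$ to a minimal local embedding space of $(X,p)$ is surjective, so a general $L'\ni p$ cuts a general hyperplane section of the germ. Hence the incidence variety
\[
B=\{(p,L'): p\in\Gamma^\circ,\ p\in L',\ (X\cap L',p)\ \text{not Du Val}\}
\]
has $\dim B\le\dim\Gamma^\circ+(N-2)\le N-1$, so its image in $(\PP^N)^\vee$ is not dense. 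Thus for a general $L$ — lying in the intersection of the finitely many dense open conditions collected above — every $p\in\Gamma\cap L$ is a point of $\Gamma^\circ$ at which $(H,p)=(X\cap L,p)$ is Du Val. Combined with the previous paragraph, $H$ is a reduced irreducible surface with at worst finitely many Du Val singular points, which is the assertion.

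The only genuinely non-formal ingredient is Proposition~\ref{cdv}, that an MJ-canonical threefold is cDV in codimension $2$; this is precisely where jet schemes — through Corollary~\ref{useful}, Remark~\ref{bound} and Theorem~\ref{isiireguera} — do the work that a resolution of singularities would do in characteristic $0$. Everything else is a characteristic-free dimension count, so the points to be careful about are only that ``compound Du Val'' is used in its definitional form (a general hyperplane section of the germ is a Du Val, i.e.\ rational double, point), which is meaningful over an algebraically closed field of any characteristic, and that the various genericity conditions on $L$ are each open and dense and hence may be imposed simultaneously.
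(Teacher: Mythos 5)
Your proposal is correct and follows essentially the same route as the paper: characteristic-free Bertini for smoothness of $H$ away from $\sing X$, Proposition~\ref{cdv} to make the one-dimensional singular locus generically cDV, and an incidence-variety dimension count over $\sing X$ showing that the hyperplanes cutting a non-Du-Val section at some singular point form a subset of dimension at most $N-1$ in $|\o_{\PP^N}(1)|$. The only (harmless) differences are that you spell out the tangent-space count behind Bertini's theorem where the paper cites Hartshorne II.8.18, and that you discard the finitely many non-cDV points by avoidance, whereas the paper keeps them in the incidence variety with the weaker bound $\dim D_x\le N-1$.
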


\begin{proof} Let $X$ be a locally closed subvariety of $\PP^N$.
As the linear system of hyperplane in $\PP^N$ is very ample,
we can apply the original Bertini's theorem (see, for example, \cite[Theorem 8.18, II]{ha})
that works for arbitrary characteristic.
Then, it follows that a general hyperplane section $H$ is
  non-singular away from the singular locus of $X$.
  Therefore, if $\dim \sing X=0$, then $H$ is non-singular.
  So we assume that $\dim \sing X=1$.
  By Proposition \ref{cdv}, a general hyperplane section intersects the singular locus
  at finite number of cDV points.
  We have only to show that the intersection is general at each point $x\in H\cap \sing X$.
 Let $|\o_{\PP^N}(1)|$ be the complete linear system of hyperplanes on $\PP^N$.
 For a point $x\in \sing X$, we define a subset $D_x\subset |\o_{\PP^N}(1)|$
 as
 $$D_x=\{\H\in |\o_{\PP^N}(1)|  \mid X\subset \H \mbox{\ or\ } (\H\cap X, x)\mbox{\ not\  rational\
 double}\}.
$$
As $\o_{\PP^N}(1)$ is very ample, the canonical $k$-linear map
$$\varphi_x: \Gamma(\PP^N,\o_{\PP^N}(1)) \to \o_{\PP^N}(1)\otimes\o_X/{\frak{m}}_x^2\simeq
\o_X/{\frak{m}}_x^2$$
is surjective.
Let $\widetilde D_x\subset \Gamma(\PP^N,\o_{\PP^N}(1))$ be the subset corresponding to $D_x$ for a cDV point $x\in \sing X$,
then $\widetilde D_x$ is the pull-back by $\varphi_x$ of the proper closed subset of $k^4={\frak m}_x/{\frak m}_x^2\subset \o_X/{\frak{m}}_x^2$.
Then, $$\dim \widetilde D_x\leq N+1-5+3=N-1;$$
 therefore $$\dim D_x\leq N-2.$$
On the other hand, if $x\in \sing X$ is not a cDV point, then $$\dim D_x\leq N-1.$$
Let $D\subset \sing X\times |\o_{\PP^N}(1)|$ be the set $\{\langle x, \H\rangle \mid \H\in D_x\}$.
Then, as $\sing X$ is of 1-dimensional and non cDV singularities are isolated,
 we have $$\dim D\leq N-1.$$
Hence, the image $p(D)\subset |\o_{\PP^N}(1)| $ of $D$ by the projection
$$p:\sing X\times |\o_{\PP^N}(1)|\to |\o_{\PP^N}(1)|$$ has dimension $N-1\leq N=\dim |\o_{\PP^N}(1)|$, which yields that general elements of $|\o_{\PP^N}(1)|$ is not in $p(D)$.
This completes the proof.
\end{proof}

The usual canonical version of this statement is proved in \cite{hiro} under certain conditions.

\vskip.5truecm
Here, we list open problems for the positive characteristic case, which are all proved for characteristic 0 (see \cite{EIM} for (1) and see \cite{ei} for (2),(3),(4)):

\vskip.5truecm
{\bf Open problems for positive characteristic case:}
\begin{enumerate}
\item Is an MJ-canonical singularity normal? Cohen--Macaulay?
\item Is the map $X\to \ZZ$, $x\mapsto \mldmj(x, X)$ lower semi-continuous?
\item Is an MJ-canonical ( MJ-log-canonical) singularity open condition?
\item Is a small deformation of MJ-canonical singularity again MJ-canonical?
\end{enumerate}

Here, we note that if there exist resolutions of singularities, then we would have the
affirmative answer to (2), (3), and (4).
On the other hand, without resolutions we can prove these if the following natural conjecture
holds (this will be discussed in a forthcoming paper by one of the authors):
\begin{conj}
  Let $0\leq\delta\leq d$, there is a number $N_{\delta,d} $  depending only on $\delta$ and $d$
  such that if
  $$s_m(x) \geq \delta\geq 0, \ \mbox {for\ all\ }m\leq N_{\delta,d}$$
  then $\mldmj(x, X)\geq \delta.$

\end{conj}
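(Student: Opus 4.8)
The plan is to combine a nearly automatic reduction to bounded embedding dimension with a uniform, effective version of inversion of adjunction, the latter being the real content. Since each $s_m(x)=(m+1)d-\dim\pi_m^{-1}(x)$ is an integer we may assume $\delta\in\ZZ$, and by Corollary \ref{useful} the quantity to control is $\mldmj(x;X)=\inf_m s_m(x)$; thus the assertion is that whenever $\mldmj(x;X)<\delta$ this drop is already witnessed at some jet level $m\le N_{\delta,d}$. First I would exploit the case $m=1$: taking $N_{\delta,d}\ge1$, the hypothesis contains $s_1(x)\ge\delta$, and since $\pi_1^{-1}(x)$ is the Zariski tangent space of $X$ at $x$ this forces $\emb(X,x)\le 2d-\delta$ (the mechanism behind Lemma \ref{emb}). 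So we may assume $(X,x)\subset(\AA^n_k,0)$ is a closed subvariety of dimension $d$ with $n:=2d-\delta$, hence of codimension $c:=d-\delta\le d$, and by inversion of adjunction (Theorem \ref{ia}) together with Corollary \ref{useful} and Theorem \ref{formula} we have $\mldmj(x;X)=\mld(0;\AA^n,I_X^c)$, with every $s_m(x)$ rewritten as a codimension of a contact locus of $I_X$ inside the fixed smooth ambient $\AA^n$. If $\delta=d$ this already concludes: $\emb(X,x)=d$, so $X$ is smooth at $x$ and $\mldmj(x;X)=d$ by Corollary \ref{very}(i), giving $N_{d,d}=1$.

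For $\delta<d$ the natural strategy is to descend by cutting with a general hyperplane through $x$ and to induct on $\dim X$, in the spirit of the proof of Theorem \ref{isiireguera}, which reduces the classification of top singularities ($\mldmj(x;X)=d-1$) to double points of surfaces, where — by Remark \ref{bound} — inspecting $s_m(x)$ for $m\le5$ suffices. The difficulty is that a general hyperplane section need not lower $\mldmj$ by exactly one: the surface $z^2+xy^2=0$ has $\mldmj=1$, while its general plane section through the origin is a cusp, with $\mldmj=-\infty$. Hence, rather than an exact restriction formula, one is forced to use the one-sided estimates (R0)--(R3) recalled in the proof of Theorem \ref{isiireguera}, and, crucially, to control \emph{uniformly in the germ $(X,x)$} how many jet levels of $X$ are needed both to read off the relevant jet levels of $X\cap H$ and to carry out the lifting step (R0).

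Abstractly, what is needed is a boundedness statement: the germs $(X,x)\subset(\AA^{2d-\delta},0)$ with $s_m(x)\ge\delta$ for $m$ up to some explicit bound should form a family over which $m\mapsto s_m(x)$ is constructible, so that $\inf_m s_m(x)=\min_{m\le M}s_m(x)$ for an $M=M(\delta,d)$ by Noetherian induction. The hard part will be producing such a bound without resolution of singularities: in characteristic $0$ one takes an embedded log resolution of $(\AA^n,I_X^c)$ and uses that its exceptional divisors of log discrepancy $<\delta$ are finite in number and appear at bounded Jacobian order, and this is exactly the input missing in characteristic $p$. Equivalently, one must bound the largest $e$ for which some irreducible component of $\cont^e(\jx)\cap\pi^{-1}(x)$ has codimension minus $e$ smaller than $\delta$, because by Proposition \ref{4.1} and Theorem \ref{cod} the contribution of that stratum to $s_m(x)$ stabilizes for all $m$ large relative to $e$ — and such a bound is essentially the conjecture itself, so the substantive step is to break this circularity.
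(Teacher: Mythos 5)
This statement is not a theorem in the paper: it is stated as an open conjecture, with no proof given. The authors only observe that it holds for $\delta=d-1$ (where one may take $N_{d-1,d}=5$, via Theorem \ref{isiireguera} and Remark \ref{bound}), and they defer the general case to a forthcoming paper. So there is no ``paper's own proof'' to compare against, and your proposal does not supply one either. Your preliminary reductions are correct and worth keeping: $s_m(x)\in\ZZ$ lets you assume $\delta\in\ZZ$; $s_1(x)=2d-\emb(X,x)$ gives the bound $\emb(X,x)\le 2d-\delta$ and hence a fixed ambient $\AA^{2d-\delta}$; the case $\delta=d$ follows from Corollary \ref{very}(i) with $N_{d,d}=1$; and your example $z^2+xy^2=0$ correctly shows that a naive induction by general hyperplane sections cannot work, since $\mldmj$ can jump to $-\infty$ under such a cut.

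The gap is the one you yourself identify in the last paragraph, and it is the entire content of the conjecture. To conclude $\inf_m s_m(x)=\min_{m\le M}s_m(x)$ you need a uniform bound, depending only on $\delta$ and $d$, on the Jacobian order $e=\ord_\gamma(\jx)$ (equivalently, by Theorem \ref{cod}, on $q\cdot j_E$ for the divisorial valuation $q\cdot\val_E$ attached by Corollary \ref{maincor} to a component computing the infimum in Theorem \ref{formula}); only then does Proposition \ref{4.1} stabilize the relevant codimensions at a controlled jet level. In characteristic $0$ such a bound comes from an embedded log resolution of $(\AA^n, I_X^{c})$, which is exactly the tool unavailable here, and asserting the bound directly is equivalent to the statement being proved. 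Your write-up honestly flags this circularity but does not break it, so the proposal establishes the conjecture only in the cases $\delta=d$ and $\delta=d-1$ already covered by the paper, and leaves $0\le\delta\le d-2$ open.
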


We observe that this holds true for $\delta=d-1$.
Indeed, we can take $N_{d-1,d}=5$ as is seen in the proof of Remark \ref{bound}.

This conjecture is equivalent to the following conjecture (this will be showed also  in the
forthcoming paper):
\begin{conj}
There exists a number $N_d$ depending only on $d$ such that
$$\min \{s_m(x)\mid m\leq N_d\}=\mldmj(x;X), \ \mbox{if}\  \mldmj(x;X)\geq 0,$$
 and 
  $$s_m(x) < 0, \ \mbox {for\ some\ }m\leq N_d , \ \mbox{if}\  \mldmj(x;X)=-\infty.$$

\end{conj}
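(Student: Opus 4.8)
The plan is to prove the equivalence of this conjecture with the one immediately preceding it --- which is exactly what the surrounding remark asserts --- and thereby reduce the final statement to that conjecture; I will then point out precisely which part remains genuinely open and where the obstruction lies. The observation that makes a single universal bound $N_d$ plausible is that $s_m(x)=(m+1)d-\dim\pi_m^{-1}(x)$ is an integer for every $m$, so by the formula $\mldmj(x;X)=\inf_m s_m(x)$ (Corollary \ref{useful}, equation (\ref{basicequation})) the quantity $\mldmj(x;X)$ takes values only in $\{0,1,\dots,d\}\cup\{-\infty\}$: it drops to $-\infty$ as soon as it is negative (Remark \ref{mld}(iii) for $\dim X\ge 2$, Definition \ref{defmldmj} for $\dim X=1$), and it is at most $d$ (Corollary \ref{very}(i)). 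Hence only finitely many target values ever occur.

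The first step is to deduce the final conjecture from the preceding one. Put $N_d:=\max\{N_{\delta,d}\mid\delta\in\ZZ,\ 0\le\delta\le d\}$. If $\mldmj(x;X)\ge 0$, write $e:=\mldmj(x;X)\in\{0,\dots,d\}$; then $s_m(x)\ge e$ for all $m$, so $\min\{s_m(x)\mid m\le N_d\}\ge e$. If $e\le d-1$ and one had $s_m(x)\ge e+1$ for all $m\le N_{e+1,d}$, the preceding conjecture applied with $\delta=e+1$ would give $\mldmj(x;X)\ge e+1$, a contradiction; hence some $m\le N_d$ satisfies $s_m(x)=e$. If $e=d$, then $X$ is smooth at $x$ by Corollary \ref{very}(i), so $\pi_m^{-1}(x)\cong\AA^{md}$ and $s_m(x)\equiv d$. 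Either way $\min\{s_m(x)\mid m\le N_d\}=\mldmj(x;X)$. If instead $\mldmj(x;X)=-\infty$ and one had $s_m(x)\ge 0$ for all $m\le N_{0,d}$, the preceding conjecture with $\delta=0$ would force $\mldmj(x;X)\ge 0$; so $s_m(x)<0$ for some $m\le N_d$.

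Next I would check the converse: with $N_{\delta,d}:=N_d$, if $s_m(x)\ge\delta\ge 0$ for all $m\le N_d$, then in particular $s_m(x)\ge 0$ there, so the $-\infty$ clause of the final conjecture gives $\mldmj(x;X)\neq-\infty$, whence $\mldmj(x;X)\ge 0$, and then its first clause yields $\mldmj(x;X)=\min\{s_m(x)\mid m\le N_d\}\ge\delta$. This completes the equivalence. Note that $N_{d,d}=1$ works unconditionally, since $s_1(x)=2d-\emb(X,x)\ge d$ already forces $X$ to be smooth at $x$, and $N_{d-1,d}=5$ works by Remark \ref{bound}; so via the reduction above the final conjecture is already known with $N_d=5$ for every $x$ with $\mldmj(x;X)\in\{d-1,d\}$. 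What is missing is the existence of $N_{\delta,d}$ for $\delta\le d-2$ and, for the $-\infty$ clause, of $N_{0,d}$.

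The hard part will be exactly this: producing a bound $N_{\delta,d}$, depending only on $\delta$ and $d$, past which the sequence $(s_m(x))_m$ can no longer conceal a value below $\delta$. Using Corollary \ref{useful} (equivalently Theorem \ref{ia}) one passes to $\mld(x;A,I_X^c)$ on a smooth ambient space $A\supset X$ and reinterprets $s_m(x)$ through codimensions of contact loci of $I_X$; the problem then becomes a uniform bound on the jet level at which the codimension of the governing irreducible component of the relevant contact locus stabilizes. In characteristic $0$ this is immediate from an embedded log resolution of $(A,I_X)$ together with Theorem \ref{cod}, but in positive characteristic neither resolution nor even the finiteness of the number of irreducible components of a cylinder is available once $\dim X>3$. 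A resolution-free route would be a dimension reduction by general hyperplane sections in the spirit of the results (R0)--(R3) behind Theorem \ref{isiireguera}, cutting $(X,x)$ down to surface and curve germs where $\dim\pi_m^{-1}(x)$ is explicitly controlled, combined with an effective bound on the number of blow-ups needed to reach a divisor computing the MJ-mld; making that bound uniform in $\delta$ --- an ACC/boundedness-type statement --- is the crux and is what currently blocks the proof.
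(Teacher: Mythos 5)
The statement you were asked to prove is a \emph{conjecture}: the paper gives no proof of it, and even the equivalence with the preceding conjecture is only asserted, with the proof deferred to ``a forthcoming paper.'' So the honest benchmark here is not a proof in the text but the paper's own admission that the core assertion is open. Your proposal is exactly right about this: you do not claim to prove the conjecture, you prove the equivalence with the preceding one, and you isolate the genuinely open content. Your equivalence argument is correct. For the forward direction, taking $N_d=\max_{0\le\delta\le d}N_{\delta,d}$ and running the preceding conjecture at level $\delta=e+1$ (when $e=\mldmj(x;X)\le d-1$) and at $\delta=0$ (for the $-\infty$ clause) is sound; the edge case $e=d$ is correctly disposed of via Corollary \ref{very}(i). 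For the converse you implicitly use that $\mldmj(x;X)$ is either $\ge 0$ or $-\infty$, with no intermediate negative values; this is exactly Remark \ref{mld}(iii) together with Definition \ref{defmldmj}, and you cite the right facts. Your observation that the known cases are $\delta\in\{d-1,d\}$ (via Remark \ref{bound} and the embedding-dimension computation $s_1(x)=2d-\emb(X,x)$) also matches what the paper actually establishes.

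The one thing to be clear about is that nothing in your proposal, nor in the paper, closes the conjecture itself: the existence of a uniform bound $N_{\delta,d}$ for $\delta\le d-2$ (and of $N_{0,d}$ for the $-\infty$ clause) is open, and your diagnosis of the obstruction --- in characteristic $0$ the bound falls out of an embedded log resolution plus Theorem \ref{cod}, while in positive characteristic one lacks both resolution and finiteness of irreducible components of cylinders, so that a resolution-free, uniformly bounded blow-up/hyperplane-section argument in the spirit of Theorem \ref{isiireguera} would be needed --- is an accurate description of where the difficulty sits. In short: your write-up correctly separates what can be proved (the equivalence, which the paper leaves unproved) from what cannot yet be proved (the conjecture), and the part you do prove is correct.
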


\begin{rem} We stated the results in this section under the condition that
$k$ is algebraically closed.
However,  we can weaken this condition such that $k$ is perfect
in all results except for Theorem \ref{isiireguera} and  Corollary \ref{hyper}.

\end{rem}

\noindent
\vskip.5truecm
\noindent
 \makeatletter \renewcommand{\@biblabel}[1]{\hfill#1.}\makeatother

\vskip1truecm

\noindent Shihoko Ishii, \\ Graduate School of Mathematical Science,
University of Tokyo, \\
3-8-1  Komaba, Meguro, 153-8914 Tokyo, Japan. \\

Current address: Department of Mathematics, Tokyo Woman's Christian University,\\
2-6-1 Zenpukuji, Suginami, 167-8585 Tokyo, Japan.\\

\noindent Ana J. Reguera, \\ Dep. de \'Algebra, Geometr\'ia y Topolog\'ia,  Universidad de Valladolid,\\
Paseo Bel\'en 7,
47011 Valladolid, Spain. \\
E-mail: areguera@agt.uva.es

\end{document}